\newcommand\cyr
\renewcommand\rmdefault{wncyr}
\renewcommand\sfdefault{wncyss}
\renewcommand\encodingdefault{OT2}
\DeclareTextFontCommand{\textcyr}{\cyr}
\newtheorem{theorem}{Theorem}[section]
\newtheorem*{theorem*}{Theorem}
\newtheorem{proposition}[theorem]{Proposition}
\newtheorem*{proposition*}{Proposition}
\newtheorem{lemma}[theorem]{Lemma}
\newtheorem{corollary}[theorem]{Corollary}
\newtheorem*{corollary*}{Corollary}
\newtheorem{conjecture}[theorem]{Conjecture} 
\theoremstyle{definition}
\newtheorem{example}[theorem]{Example}
\newtheorem{definition}[theorem]{Definition}
\newtheorem{condition}[theorem]{Condition} 
\newtheorem{problem}[theorem]{Problem} 
\theoremstyle{remark}
\newtheorem{remark}[theorem]{Remark}
\def \AA {\mathcal{A}}
\def \E {\mathcal{E}}
\def \G {\mathbb{G}}
\def \cR {\mathcal{R}}
\def \L {\mathcal{L}}
\def \N {\mathcal{N}}
\def \O {\mathcal{O}}
\def \T {\mathcal{T}}
\def \V {\mathcal{V}}
\def \C {\mathbb{C}}
\def \F {\mathbb{F}}
\def \P {\mathbb{P}}
\def \Q {\mathbb{Q}}
\def \R {\mathbb{R}}
\def \Z {\mathbb{Z}}
\def \iu {\sqrt{-1}}
\def \Gg {\widehat{\Gamma}} 
\DeclareMathOperator{\ch}{ch} 
\DeclareMathOperator{\Ch}{Ch} 
\def \im {\mathrm{Im}}
\def \ge {\geqslant}
\def \le {\leqslant}
\newcommand{\ang}[1] {\langle #1 \rangle}
\newcommand{\Ang}[1]{\left\langle #1 \right\rangle} 
\newcommand{\ov}[1] {\overline{#1}}
\newcommand{\Ker}{\operatorname{Ker}}
\newcommand{\udot}{{\:\raisebox{3pt}{\text{\circle*{1.5}}}}}
\newcommand{\ldot}{{\:\raisebox{2pt}{\text{\circle*{1.5}}\:\!}}}
\DeclareMathOperator{\Hom}{Hom}
\DeclareMathOperator{\Ext}{Ext}
\DeclareMathOperator{\rank}{rank}
\DeclareMathOperator{\End}{End} 
\DeclareMathOperator{\id}{id} 
\DeclareMathOperator{\Gr}{Gr}
\def\parfrac#1#2{{\frac{\partial #1}{\partial #2}}}
\newcommand{\Eff}{\operatorname{Eff}}
\newcommand{\re}{\operatorname{Re}} 
\newcommand{\Td}{\operatorname{Td}} 
\newcommand{\td}{\operatorname{td}} 
\newcommand{\pt}{\operatorname{pt}}
\newcommand{\Euler}{\operatorname{Euler}} 
\newcommand{\sgn}{\operatorname{sgn}}
\newcommand{\vol}{\operatorname{vol}}
\newcommand{\Lie}{\operatorname{Lie}} 
\newcommand{\Const}{\operatorname{Const}} 
\newcommand{\PD}{\operatorname{PD}}
\newcommand{\tJ}{\widetilde{J}}
\newcommand{\halpha}{\widehat{\alpha}} 
\newcommand{\mY}{Y^-}
\newcommand{\tg}{\tilde{g}}
\newcommand{\tv}{\tilde{v}}
\newcommand{\hG}{\widehat{G}} 
\newcommand{\frS}{\mathfrak{S}}
\newcommand{\cF}{\mathcal{F}}
\newcommand{\cG}{\mathcal{G}}
\title[Gamma conjecture via mirror symmetry]
{Gamma conjecture via mirror symmetry}
\author[S. Galkin]{Sergey Galkin}
\address{National Research University Higher School of Economics, Russian Federation}
\email{Sergey.Galkin@phystech.edu} 
\author[H. Iritani]{Hiroshi Iritani} 
\address{Department of Mathematics \\ 
Graduate School of Science \\ 
Kyoto University \\  
Kitashirakawa-Oiwake-cho \\ Sakyo-ku \\ 
Kyoto \\ 606-8502 \\ Japan} 
\email{iritani@math.kyoto-u.ac.jp} 
\subjclass[2010]{53D37 (primary), 14N35, 14J45, 14J33, 11G42 (secondary)}
\keywords{Fano varieties; quantum cohomology; 
mirror symmetry; Dubrovin's conjecture; Gamma class; Apery constant; 
derived category of coherent sheaves; exceptional collection; 
Landau--Ginzburg model}
\begin{document}

\begin{abstract} 
The asymptotic behaviour of solutions to the quantum 
differential equation of a Fano manifold $F$ 
defines a characteristic class $A_F$ of $F$, called 
the principal asymptotic class. 
Gamma conjecture \cite{GGI:gammagrass} of Vasily Golyshev 
and the present authors 
claims that the principal asymptotic class $A_F$ 
equals the Gamma class $\Gg_F$ associated to Euler's 
$\Gamma$-function. 
We illustrate in the case of toric varieties, toric complete intersections 
and Grassmannians how this conjecture follows from 
mirror symmetry. We also prove that Gamma conjecture is 
compatible with taking hyperplane sections, and 
give a heuristic argument how the mirror oscillatory integral 
and the Gamma class for the projective space arise from the 
polynomial loop space. 
\end{abstract}

\maketitle
  
\let\oldtocsection=\tocsection
\let\oldtocsubsection=\tocsubsection

\renewcommand{\tocsection}[2]{\hspace{0em}\oldtocsection{#1}{#2}}
\renewcommand{\tocsubsection}[2]{\hspace{1em}\oldtocsubsection{#1}{#2}}
{
\tableofcontents
} 

\section{Introduction} 
\subsection{Gamma conjecture} 
Gamma conjecture \cite{GGI:gammagrass} is a conjecture 
which relates quantum cohomology of a Fano manifold 
with its topology. 
The small quantum cohomology of $F$ defines a flat connection 
(quantum connection) 
over $\C^\times$ and its solution is given by 
a (multi-valued) cohomology-valued function $J_F(t)$ called the \emph{$J$-function}. 
Under a certain condition (Property $\O$), the limit of the $J$-function: 
\[
A_F := \lim_{t\to +\infty}  \frac{J_F(t)}{\Ang{[\pt],J_F(t)}} \in  H^\udot(F) 
\]
exists and defines the \emph{principal asymptotic class} $A_F$ of $F$. 
\emph{Gamma conjecture I} says that $A_F$ equals the Gamma class 
$\Gg_F=\Gg(TF)$ of the tangent bundle of $F$ (see \S \ref{subsec:Gamma_class}): 
\[
A_F = \Gg_F.  
\]
Suppose that the quantum cohomology of $F$ is semisimple. 
In this case, we can define \emph{higher} asymptotic classes $A_{F,i}$, 
$1\le i \le N=\dim H^\udot(F)$ from exponential 
asymptotics of flat sections of the quantum connection. 
\emph{Gamma conjecture II} says 
that there exists a full exceptional collection $E_1, E_2,\dots,E_N$ 
of $D^b_{\rm coh}(F)$ such that we have 
\[
A_{F,i} = \Gg_F \cdot \Ch(E_i) \qquad 
i=1,\dots,N. 
\]
Here we write $\Ch(E) := (2\pi\iu)^{\frac{\deg}{2}} \ch(E) 
= \sum_{p=0}^{\dim F} (2\pi\iu)^p \ch_p(E)$ for the 
$(2\pi\iu)$-modified version of the Chern character.  
The principal asymptotic class $A_F$ corresponds to the 
exceptional object $E = \O_F$. 
\subsection{Riemann--Roch} 
\label{subsec:HRR} 
One may view Gamma conjecture as a \emph{square root}  
of the index theorem. Recall the Hirzebruch--Riemann--Roch 
formula: 
\[
\chi(E_1,E_2) = \int_F \ch(E_1^\vee) \cdot \ch(E_2) \cdot \td_F 
\]
for vector bundles $E_1,E_2$ on $F$, where $\chi(E_1,E_2) 
= \sum_{i=0}^{\dim F} (-1)^i\dim \Ext^i(E_1,E_2)$ 
is the Euler pairing and $\td_F =\td(TF)$ is the Todd class 
of $F$. The famous identity 
\[
\frac{x}{1-e^{-x}} = e^{x/2} \Gamma\left(1-\frac{x}{2\pi\iu}\right) 
\Gamma\left(1+ \frac{x}{2\pi\iu}\right) 
\]
gives the factorization 
$(2\pi\iu)^{\frac{\deg}{2}}
\td_F =e^{\pi \iu c_1(F)} \cdot \Gg_F \cdot \Gg_F^*$  
of the Todd class\footnote
{We define $(2\pi\iu)^{\frac{\deg}{2}} \td_F := 
\sum_{p=0}^{\dim F} (2\pi \iu)^p t_p$ if $\td_F = \sum_{p=0}^{\dim F} 
t_p$ with $t_p \in H^{2p}(F)$.}, which in turn factorizes 
the Hirzebruch--Riemann--Roch formula as:
\begin{equation} 
\label{eq:factorize_HRR} 
\chi(E_1,E_2) = \left[\Ch(E_1) \cdot \Gg_F, \Ch(E_2) \cdot \Gg_F\right). 
\end{equation} 
Here we set 
$\Gg_F^* = (-1)^{\frac{\deg}{2}} \Gg_F = \sum_{p=0}^{\dim F} 
(-1)^p \gamma_p$ (if we write $\Gg_F = \sum_{p=0}^{\dim F} \gamma_p$ 
with $\gamma_p \in H^{2p}(F)$)  
and 
$[\cdot,\cdot)$ is a non-symmetric pairing on $H^\udot(F)$ 
given by 
\begin{equation} 
\label{eq:pairing_[)}
[\alpha,\beta) = \frac{1}{(2\pi)^{\dim F}} 
\int_F (e^{\pi\iu c_1(X)} e^{\pi \iu \mu} \alpha ) \cup \beta
\end{equation}
with $\mu\in \End(H^\udot(F))$ defined by
$\mu(\phi) = (p- \frac{\dim F}{2}) \phi$ 
for $\phi\in H^{2p}(F)$. 
Via the factorization \eqref{eq:factorize_HRR}, 
Gamma conjecture II implies part of Dubrovin's 
conjecture \cite{Dubrovin98}: the Euler matrix 
$\chi(E_i,E_j)$ of the exceptional collection equals the 
Stokes matrix $S_{ij} = [A_{F,i}, A_{F,j})$ of 
the quantum differential equation. 

\subsection{Mirror symmetry} 
In the B-side of mirror symmetry, solutions to Picard--Fuchs equations 
are often given by hypergeometric series whose coefficients 
are ratios of $\Gamma$-functions. 
Recall that a mirror of a quintic threefold $Q \subset \P^4$ 
is given by the pencil of hypersurfaces 
$Y_t = \{f(x) =t^{-1}\}$ 
in the torus $(\C^\times)^4$ which can be compactified to 
smooth Calabi--Yau threefolds $\ov{Y}_t$ \cite{CDGP}, 
where $f$ is the Laurent polynomial given by 
\[
f(x) = x_1 + x_2 + x_3 + x_4 + \frac{1}{x_1 x_2 x_3 x_4}. 
\]
For the holomorphic volume form 
$\Omega_t = d \log x_1 \wedge \cdots \wedge d\log x_4 /df$ 
on $\ov{Y}_t$ and a real 3-cycle $C \subset \ov{Y}_t$, 
the period $\int_{C} \Omega_t$ satisfies the Picard--Fuchs equations 
\[
\left( 
\theta^4 - 5^5 t^5 (\theta + 1) (\theta +2) 
(\theta + 3) ( \theta+ 4) 
\right) \int_C \Omega_t = 0  
\]
with $\theta =  t\parfrac{}{t}$. 
The Frobenius method yields the following solution 
to this differential equation: 
\[
\Phi(t) = 
\sum_{n=0}^\infty \frac{\Gamma(1+5n + 5\epsilon)}{\Gamma(1+n+\epsilon)^5} 
t^{5n+5\epsilon} 
\]
where $\epsilon$ is an infinitesimal parameter satisfying $\epsilon^4=0$. 
Regarding $\epsilon$ as a hyperplane class on the quintic $Q$, 
we may identify the leading term of the series $\Phi(t)$ with 
the inverse Gamma class of $Q$: 
\[
\frac{\Gamma(1+5\epsilon)}{\Gamma(1+\epsilon)^5} 
= \frac{1}{\Gg_Q}. 
\]
This is how the Gamma class originally arose 
in the context of mirror symmetry \cite{HKTY, Lib99}. 
Hosono \cite{Hosono} conjectured 
(more generally for a complete intersection Calabi--Yau) 
that the period $\int_C \Omega_t$ of  
an integral 3-cycle $C\subset \ov{Y}_t$ should be written in the form 
\begin{equation} 
\label{eq:central_charge_V} 
\int_{Q} \Phi(t) \cdot \Ch(V) \cdot \Td_Q 
\end{equation} 
for a vector bundle $V \to Q$ which is ``mirror'' to $C$, 
where $\Td_Q = (2\pi\iu)^{\frac{\deg}{2}} \td_Q$. 
In physics terminology, the period $\int_C \Omega_t$ 
(or the quantity \eqref{eq:central_charge_V}) is called the \emph{central charge} 
of the D-branes $C$ (resp.~$V$). 
Hosono's conjecture has been answered affirmatively 
in \cite{Iritani11} by showing that the natural integral structure 
$H^3(\ov{Y}_t,\Z)$ agrees
with the $\Gg$-integral structure in quantum cohomology of $Q$, 
see also \cite{Horja, Borisov-Horja, Iritani09, Golyshev-Mellit}. 

A main purpose of this article is to explain a relationship 
between Gamma conjecture and mirror symmetry. 
In fact, Hosono's conjecture for a quintic $Q$ 
is closely related to the truth of Gamma conjecture 
for the ambient Fano manifold $\P^4$. A mirror of $\P^4$ is given by 
the Landau--Ginzburg model $f \colon (\C^\times)^4 \to \C$ 
and the quantum differential equation for $\P^4$ has a solution given by 
the oscillatory integral: 
\[
\int_\Gamma \exp(t f(x)) 
\frac{dx_1}{x_1} \wedge \cdots \wedge \frac{dx_4}{x_4}. 
\]
When the cycle $\Gamma$ is a Lefschetz thimble of $f(x)$ 
and $C$ is the associated vanishing cycle, this oscillatory integral 
can be written as a Laplace transform of the period $\int_C \Omega_t$. 
Correspondingly, by quantum Lefschetz principle \cite{Givental:equivariant, 
Kim:qLefschetz, YPLee:qLefschetz, Coates-Givental}, 
the quantum differential equation for $Q$ arises from   
a Laplace transform of the quantum differential equation 
for $\P^4$ \cite{Dubrovin:almostduality,IMM:qSerre}. 
Gamma conjecture relates a Lefschetz thimble of $f$ with 
an exceptional object $E$ in $D^b_{\rm coh}(\P^4)$ 
via the exponential asymptotics of the corresponding oscillatory integral; 
then the vanishing cycle $C$ associated with $\Gamma$ 
corresponds to the spherical object  $V= i^*E$ on $Q$ under 
Hosono's conjecture, see \cite[Theorem 6.9]{Iritani11}.  
See the comparison table below for quantum differential 
equations (QDE) of a Fano manifold $F$ and its anti-canonical 
section $Q$. 

\renewcommand{\arraystretch}{1.2}
\begin{table}[h] 
\begin{tabular}{c|ccc}  
& Fano  & & Calabi--Yau \\ \hline  
space $X$ & $F$ & & $Q \in |-K_F|$ \\ 
mirror & $f \colon (\C^\times)^n \to \C$ & & 
$\ov{Y}_t = \ov{f^{-1}(1/t)}$ \\
singularities of QDE  & irregular & & regular \\
solutions (central charges) & oscillatory integral of $f$ 
& $\overset{\text{Laplace}}{\leftrightarrow}$ 
& period integral of $\ov{Y}_t$ \\ 
cycles on the mirror & Lefschetz thimble & 
$\overset{\text{fiber}}{\to}$ 
& vanishing cycle \\ 
objects of $D^b_{\rm coh}(X)$ 
& exceptional object $E_i$ & 
$\overset{i^*}{\to}$ & spherical object $V_i$ \\ 
monodromy data 
& Stokes $S_{ij} = \chi(E_i,E_j)$ & &  reflection $S_{ij} - (-1)^n S_{ji}$
\end{tabular} 
\vspace{10pt} 
\caption{We expect a mirror correspondence between 
objects of $D^b_{\rm coh}(X)$ and integration cycles on 
the mirror; 
when a vanishing cycle $C$ arises as a fiber of a Lefschetz thimble $\Gamma$, 
the spherical object $V$ on $Q$ mirror to $C$ should be  
the pull-back of the exceptional object $E$ on $F$ mirror to $\Gamma$. 
A Lefschetz thimble of $f$ gives a solution to the quantum differential 
equation of $F$ which has a specific exponential asymptotics as $t\to \infty$. } 
\end{table}
\renewcommand{\arraystretch}{1} 
\vspace{-15pt} 

\subsection{Plan of the paper} 
In \S\ref{sec:qcoh_qconn}, 
we review definitions and basic facts on quantum cohomology 
and quantum connection. 
In \S\ref{sec:GI} and \S \ref{sec:GII}, we discuss Gamma conjecture I and II 
respectively. 
This part is a review of our previous paper \cite{GGI:gammagrass} 
with Vasily Golyshev. 
In \S \ref{sec:heuristics}, we give a heuristic argument which 
gives mirror oscillatory integral and the Gamma class in terms of 
polynomial loop spaces.  
In \S\ref{sec:toric} and \S \ref{sec:toric_CI}, we discuss Gamma conjecture 
for toric varieties and toric complete intersections 
using Batyrev--Borisov/Givental/Hori--Vafa mirrors. 
In \S \ref{sec:Lefschetz}, we discuss 
compatibility of Gamma conjecture I with taking hyperplane 
sections (quantum Lefschetz). 
In \S\ref{sec:Grassmannian}, we discuss Gamma conjecture 
for Grassmannians $\Gr(r,n)$ using the Hori--Vafa mirror which is 
the $r$th alternate product of the mirrors of $\P^{n-1}$. 
In Appendix \ref{app:odd}, we discuss eigenvalues of the 
quantum multiplication by $c_1(F)$ 
on odd cohomology (and on $H^{p,q}(F)$ with $p\neq q$). 

\medskip 
\noindent 
{\bf Acknowledgments.} 
We thank Vasily Golyshev for insightful discussions during the 
collaboration \cite{GGI:gammagrass}. We also thank an anonymous 
referee for very helpful comments. 
H.I.~thanks Kentaro Hori, Mauricio Romo and Kazushi Ueda for the 
discussion on the papers \cite{Benini-Cremonesi, DGLFL, Hori-Romo}. 
This project was supported by JSPS and Russian Foundation for
Basic Research under the Japan--Russia Research Cooperative Program 
``Categorical and Analytic Invariants in Algebraic Geometry''. 
S.G.~was  partially supported by the Russian Academic Excellence Project '5-100',
Dynasty Foundation, Simons IUM fellowship, and by
RFBR, research project No.15-51-50045\hspace{-10pt}{\cyr yafa}. 
H.I.~was supported by JSPS Kakenhi Grant number 25400069, 
23224002, 26610008, 24224001, 16K05127, 16H06337, 16H06335.

\section{Quantum cohomology and quantum connection} 
\label{sec:qcoh_qconn} 
Let $H^\udot(F) = H^\udot(F,\C)$ 
denote the even degree part of the Betti cohomology group of $F$
(see Appendix \ref{app:odd}). 
The (small) quantum product $\star_0$ on $H^\udot(F)$ 
is defined by the formula: 
\[
(\alpha\star_0 \beta,\gamma)_F = \sum_{d\in H_2(F,\Z)} 
\Ang{\alpha,\beta,\gamma}_{0,3,d} 
\]
where $(\cdot,\cdot)_F$ is the Poincar\'{e} pairing 
on $F$ and $\ang{\cdots}_{0,3,d}$ is the genus-zero 
three point Gromov--Witten invariants, which 
roughly speaking counts the number of rational curves in 
$F$ passing through the cycles Poincar\'{e} dual to 
$\alpha$, $\beta$ and $\gamma$ (see e.g.~\cite{Man99}). 
By the dimension axiom, these Gromov--Witten invariants 
are non-zero only if $2c_1(F) \cdot d + 2\dim F = \deg \alpha + 
\deg \beta + \deg \gamma$; since $F$ is Fano, there are 
finitely many such curve classes $d$ and the above sum 
is finite. 
The product $\star_0$ is associative and commutative, and 
$(H^\udot(F),\star_0)$ becomes a finite dimensional algebra. 
More generally we can define the big quantum product 
$\star_\tau$ for $\tau\in H^\udot(F)$: 
\[
(\alpha\star_\tau \beta,\gamma) = \sum_{d\in H_2(F,\Z)} 
\sum_{n=0}^\infty \frac{1}{n!}
\Ang{\alpha,\beta,\gamma,\tau,\dots,\tau}_{0,3+n,d}.  
\]
This defines a formal\footnote{The convergence of the 
big quantum product is not known in general.} deformation 
of the small quantum cohomology as a commutative ring. 
In this paper, we will restrict our attention to 
the small quantum product $\star_0$. 

The quantum connection is a meromorphic flat connection on the 
trivial $H^\udot(F)$-bundle over $\P^1$. It is given by: 
\begin{equation}
\label{eq:qconn}
\nabla_{z\partial_z} = 
z \parfrac{}{z} - \frac{1}{z} (c_1(F) \star_0) + \mu 
\end{equation} 
where $z$ is an inhomogeneous co-ordinate on $\P^1$ 
and $\mu \in \End(H^\udot(F))$ is defined by 
$\mu (\phi) = (p - \frac{\dim F}{2})\phi$ for $\phi\in H^{2p}(F)$. 
This is regular singular (logarithmic) at $z=\infty$ and 
irregular singular at $z=0$. 
We have a canonical fundamental solution around $z=\infty$ 
as follows: 
\begin{proposition}[\cite{Dubrovin98a,GGI:gammagrass}] 
\label{prop:fundsol} 
There exists a unique $\End(H^\udot(F))$-valued 
power series $S(z) = \id + S_1 z^{-1} + S_2 z^{-2} 
+\cdots$ which converges over the whole $z^{-1}$-plane 
such that 
\begin{align*} 
& \nabla (S(z) z^{-\mu} z^{c_1(F)} \phi) = 0 \qquad 
\forall \phi\in H^\udot(F) \\ 
& \text{$T(z) = z^{\mu}S(z) z^{-\mu}$ is regular at $z=\infty$ 
and $T(\infty) = \id$. } 
\end{align*} 
Here we set $z^{c_1(F)} = \exp(c_1(F) \log z)$ and 
$z^{-\mu} = \exp(-\mu \log z)$. 
\end{proposition}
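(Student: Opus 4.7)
The plan is to construct $S(z) = \sum_{n\geq 0} S_n z^{-n}$ as a formal power series by solving the flatness equation order-by-order, use the cohomological grading to show the normalizations determine it uniquely, and then appeal to the Frobenius theorem for convergence.

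First I would derive the recursion. The commutation $[\mu, c_1(F)] = c_1(F)$ — which holds because $\mu$ acts as $p - (\dim F)/2$ on $H^{2p}$ while $c_1(F)$ raises degree by $2$ — gives $z^{-\mu} c_1(F) = z^{-1} c_1(F) z^{-\mu}$, and hence $(z\partial_z)(z^{-\mu} z^{c_1(F)}) = (-\mu + z^{-1} c_1(F))\, z^{-\mu} z^{c_1(F)}$. Substituting the ansatz $S(z) z^{-\mu} z^{c_1(F)} \phi$ into $\nabla = z\partial_z - z^{-1} C + \mu$ (with $C = c_1(F) \star_0$) and matching coefficients of $z^{-n}$, the flatness equation becomes
\[
([\mu,\cdot] - n)\, S_n = C\, S_{n-1} - S_{n-1}\, c_1(F), \quad n \ge 1, \qquad S_0 = \id.
\]

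Next I would solve this by decomposing $S_n = \sum_k S_n^{(k)}$ according to cohomological grading, so that $S_n^{(k)}$ maps $H^{2q}$ to $H^{2q+2k}$ and $[\mu, S_n^{(k)}] = k\, S_n^{(k)}$. The recursion splits as $(k-n)\,S_n^{(k)} = (C S_{n-1} - S_{n-1} c_1(F))^{(k)}$, which for $k \ne n$ uniquely determines $S_n^{(k)}$ from the previous step. Writing $T(z) = z^\mu S(z) z^{-\mu} = \sum_{n,k} z^{k-n} S_n^{(k)}$, regularity of $T$ at $z = \infty$ forces $S_n^{(k)} = 0$ whenever $k > n$; I would verify by induction that the RHS also vanishes for $k > n$, using that the Fano condition makes each $c_1(F)\star_d$ with $d \ne 0$ have cohomological degree shift $2 - 2 c_1(F)\cdot d \le 0$. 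For the $k = n$ part, the normalization $T(\infty) = \id$ reads $\sum_{n \ge 1} S_n^{(n)} = 0$; since $S_n^{(n)}$ shifts degree by $2n$ and the decomposition $\End(H^\udot(F)) = \bigoplus_k \End_k$ by grading is a direct sum, this forces each $S_n^{(n)} = 0$ individually. The consistency of the recursion at $(n,n)$ reduces to $[c_1(F)\cup,\, S_{n-1}^{(n-1)}] = 0$, which holds automatically from the inductive hypothesis $S_{n-1}^{(n-1)} = 0$. Thus the formal power series $S(z)$ exists and is unique.

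For convergence I would change variable to $w = z^{-1}$: the quantum connection becomes $\nabla_{w\partial_w} = w \partial_w + w\, C - \mu$, exhibiting $w = 0$ as a regular singular point with no other singularities for finite $w$ (the remaining singularity of $\nabla$ sits at $z = 0$, i.e.\ $w = \infty$). The ansatz $S(z) z^{-\mu} z^{c_1(F)}$ is precisely the Frobenius normal form at such a regular singular point, so the classical theorem guarantees that the formal series $S(z)$ converges on the maximal disk at $w = 0$ free of other singularities — that is, on all of $\C_w$. Hence $S(z)$ is entire in $z^{-1}$.

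The main obstacle will be the uniqueness/consistency analysis of the second paragraph: carefully tracking how the grading decomposition of $C = c_1(F) \cup + \sum_{d \ne 0} c_1(F) \star_d$ interacts with the recursion, so that the required vanishing of $(CS_{n-1} - S_{n-1}c_1(F))^{(k)}$ for $k \ge n$ indeed propagates inductively and $T(\infty) = \id$ cleanly isolates the unique $S$, rather than leaving residual freedom in the resonant directions $k = n$.
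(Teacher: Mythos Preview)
Your proposal is correct and is essentially the standard argument. The paper itself does not give a proof here---it simply cites \cite{Dubrovin98a,GGI:gammagrass}---but the approach in those references is precisely the one you outline: solve the recursion $([\mu,\cdot]-n)S_n = (c_1(F)\star_0)S_{n-1} - S_{n-1}c_1(F)$ using the $\mu$-grading, use the Fano degree bound on quantum corrections to kill $S_n^{(k)}$ for $k>n$, fix the resonant pieces $S_n^{(n)}$ via the normalization $T(\infty)=\id$, and deduce convergence from the regularity of the singularity at $z=\infty$.

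One small remark: when you write that $c_1(F)\star_d$ has degree shift $2-2c_1(F)\cdot d$, this is the shift in cohomological degree, whereas your grading index $k$ (defined by $[\mu,S_n^{(k)}]=kS_n^{(k)}$) records the shift in half-degree, so the shift of $C^{(j)}$ in your decomposition is $j=1-c_1(F)\cdot d$. This does not affect the argument (both are $\le 1$, with equality only for $d=0$), but keeping the bookkeeping consistent will make the inductive verification that $(CS_{n-1}-S_{n-1}c_1(F))^{(k)}=0$ for $k\ge n$ cleaner.
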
 
The fundamental solution $S(z) z^{-\mu} z^{c_1(F)}$ identifies 
the space of flat sections with the cohomology group $H^\udot(F)$. 
In other words, a basis of the cohomology group $H^\udot(F)$ 
yields a basis of flat sections via the map $S(z) z^{-\mu} z^{c_1(F)}$ --- 
this amounts to solving the quantum differential 
equation by the \emph{Frobenius method} around $z=\infty$. 

\begin{remark} 
\label{rem:big_qconn}
The quantum connection can be extended to a meromorphic flat connection 
on the trivial $H^\udot(F)$-bundle over 
$H^\udot(F) \times \P^1$ via the big quantum product: 
\begin{align}
\label{eq:big_qconn} 
\begin{split}  
\nabla_\alpha & = \partial_\alpha + \frac{1}{z} (\alpha \star_\tau) 
\qquad \alpha \in H^\udot(F), 
\\ 
\nabla_{z\partial_z} & = z \partial_z - \frac{1}{z} (E\star_\tau)   + \mu 
\end{split}
\end{align} 
where $E = c_1(F) + \sum_{i=1}^N (1- \frac{\deg \phi_i}{2}) \tau^i \phi_i$ 
with $\{\phi_i\}_{i=1}^N$ a homogeneous basis of $H^\udot(F)$ 
and $\tau = \sum_{i=1}^N \tau^i \phi_i$. 
\end{remark} 

\begin{remark}[\cite{GGI:gammagrass}]   
\label{rem:gauge}
Via the gauge transformation $z^{\mu}$ and the change $z=t^{-1}$ 
of variables, we have 
\[
z^\mu \nabla_{z\partial_z} z^{-\mu} = z\parfrac{}{z} - 
(c_1(F)\star_{- c_1(F) \log z}) 
= - \left( t \parfrac{}{t} + c_1(F)\star_{c_1(F) \log t} \right) 
\]
This gives the quantum connection along the 
``anticanonical line'' $\C c_1(F)$. 
\end{remark}

\section{Gamma conjecture I (and I')}
\label{sec:GI} 

\subsection{Property $\O$} 
We start with Property $\O$ (or Conjecture $\O$) for a Fano manifold. This is a 
supplementary condition we need to formulate Gamma conjecture I. 

\begin{definition}[Property $\O$ \cite{GGI:gammagrass}]  
\label{def:propertyO} 
Let $F$ be a Fano manifold and define a non-negative 
real number $T$ as 
\begin{equation} 
\label{eq:T} 
T := \max \{ |u|: \text{$u\in\C$ 
is an eigenvalue of $(c_1(F)\star_0)$}\} \in \ov{\Q}. 
\end{equation} 
We say that $F$ satisfies \emph{Property $\O$}  
if the following conditions are satisfied: 
\begin{enumerate} 
\item $T$ is an eigenvalue of $(c_1(F)\star_0)$ of multiplicity one; 
\item if $u$ is an eigenvalue of $(c_1(F)\star_0)$ with $|u| = T$, 
there exists a complex number $\zeta$ such that $\zeta^r =1$ and 
that $u = \zeta T$,
where $r$ is the Fano index of $F$, i.e.~the maximal 
integer $r>0$ such that $c_1(F)/r$ is an integral class. 
\end{enumerate} 
\end{definition} 

\begin{remark} 
`$\O$' means the structure sheaf of $F$. Under mirror symmetry, 
it is conjectured that the set of eigenvalues of $(c_1(F)\star_0)$ 
agrees with the set of critical values of the mirror 
Landau--Ginzburg potential $f$ 
(see, e.g.~\cite{Givental:ICM}, \cite[Theorem 6.1]{Auroux}); 
for example, this holds for Fano toric manifolds. 
Under Property $\O$, number $T$ should be a critical value 
of $f$ and the Lefschetz thimble corresponding to $T$ 
should be mirror to the structure sheaf $\O$. 
Conjecture $\O$ \cite{GGI:gammagrass} says that 
every Fano manifold satisfies Property $\O$. 
Some Fano \emph{orbifolds} with non-trivial $\pi_1^{\rm orb}(F)$ do 
not satisfy Property $\O$ \cite{Galkin:conifoldpoint, GGI:gammagrass}. 
We also note that Part (1) of Property $\O$ implies that the small 
quantum cohomology ring $(H^\udot(F),\star_0)$ has 
a field as a direct summand; this weaker conjecture is open as well. 
\end{remark} 
\begin{remark} 
Perron--Frobenius theorem says that an irreducible square matrix 
with non-negative entries 
has a positive eigenvalue with the biggest norm whose multiplicity is one. 
It is likely that $c_1(F)\star_0$ is represented by a non-negative matrix 
if we have a basis of $H^\udot(F)$ consisting of `positive' (algebraic) cycles. 
This remark is due to Kaoru Ono. 
\end{remark} 

\begin{remark} 
Property $\O$ for homogeneous spaces $G/P$ 
was recently proved by Cheong--Li \cite{Cheong-Li:conj_O}. 
\end{remark}

\subsection{Gamma class} 
\label{subsec:Gamma_class} 
The Gamma class \cite{Lib99, Lu, Iritani09} is a characteristic class defined 
for an almost complex manifold $F$. 
Let $\delta_1,\dots,\delta_n$ be the Chern roots of 
the tangent bundle $TF$ of $F$ such that 
$c(TF) = (1 + \delta_1) (1 + \delta_2)  \cdots (1+ \delta_n)$. 
The Gamma class $\Gg_F$ is defined to be 
\[
\Gg_F = \prod_{i=1}^n \Gamma(1 + \delta_i) \in H^\udot(F,\R) 
\]
where $\Gamma(z) = \int_0^\infty e^{-t} t^{z-1} dt$ is 
Euler's $\Gamma$-function. Since $\Gamma(z)$ is 
holomorphic at $z=1$, via the Taylor expansion, 
the right-hand side makes sense  
as a symmetric power series in $\delta_1,\dots,\delta_n$, 
and therefore as a (real) cohomology class of $F$. 
This is a transcendental class and is given by 
\[
\Gg_F = \exp \left( - \gamma c_1(F) + \sum_{k=2}^\infty 
(-1)^k (k-1)! \zeta(k) \ch_k(TF) \right) 
\]
where $\zeta(z)$ is the Riemann zeta function. 
As explained in the Introduction (see \S \ref{subsec:HRR}), 
the formula $(2\pi\iu)^{\frac{\deg}{2}} \td_F = e^{\pi\iu c_1(F)} 
\Gg_F \Gg_F^*$ shows that 
the Gamma class $\Gg_F$ can be regarded as a square root 
of the Todd class $\td_F$. There is also an interpretation 
of the Gamma class in terms of the free loop space $\L F$ of $F$ 
due to Lu \cite{Lu} (see also \cite{Iritani:ttstar,GGI:gammagrass}): 
$\Gg_F$ arises from the $\zeta$-function 
regularization of the $S^1$-equivariant Euler class $e_{S^1}(\N_+)$, 
where $\N_+$ is the positive normal bundle of the locus $F$ of 
constant loops in $\L F$. 

\subsection{Principal asymptotic class and Gamma conjecture I} 
\label{subsec:principal_asymptotic} 
Consider the space of flat sections for the quantum 
connection $\nabla$ \eqref{eq:qconn} 
over the positive real line $\R_{>0}$. 
We introduce the subspace $\AA$ of flat sections having 
the smallest asymptotics $\sim e^{-T/z}$ as $z\to +0$. 
\[
\AA := \left\{ s \colon \R_{>0} \to H^\udot(F) : 
\nabla s(z) = 0,\ 
\|e^{T/z} s(z)\| = O(z^{-m}) \text{ as $z\to +0$ 
($\exists m$)} 
\right\} 
\]
where $T>0$ is the number in Definition \ref{def:propertyO}. 
\begin{proposition}[{\cite[Proposition 3.3.1]{GGI:gammagrass}}]
\label{prop:dimAA_is_one}
Suppose that a Fano manifold $F$ satisfies Property $\O$. 
We have $\dim_\C \AA = 1$. Moreover, for every element 
$s(z) \in \AA$, the limit $\lim_{z\to +0} e^{T/z} s(z)$ 
exists and lies in the $T$-eigenspace $E(T)$ of 
$(c_1(F)\star_0)$. 
\end{proposition}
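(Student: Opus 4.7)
The plan is to apply the Hukuhara--Turrittin theory of formal decomposition at the irregular singular point $z=0$, combined with two structural features of the quantum connection: the self-adjointness of $C:=(c_1(F)\star_0)$ and the skew-adjointness of $\mu$ with respect to the Poincar\'e pairing $(\cdot,\cdot)_F$. I would first decompose $H^\udot(F) = E(T)\oplus E'$, where $E'$ is the sum of the generalised eigenspaces of $C$ for eigenvalues $u\neq T$. The Frobenius identity $(\alpha\star_0\beta,\gamma)_F=(\alpha,\beta\star_0\gamma)_F$ makes $C$ self-adjoint with respect to $(\cdot,\cdot)_F$, so generalised eigenspaces of $C$ for distinct eigenvalues are mutually orthogonal; in particular $E(T)\perp E'$. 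Hence the projection $P\colon H^\udot(F)\to E(T)$ along $E'$ is the orthogonal projection, and, fixing a vector $v_0$ spanning the line $E(T)=\C v_0$, we have $(v_0,v_0)\neq 0$ because the pairing restricts non-degenerately to $E(T)$.

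Next, by the Hukuhara--Turrittin--Levelt formal classification theorem, since $T$ is a simple eigenvalue of the leading coefficient $C$, there is a formal gauge transformation $\hat F(z)=I+zF_1+z^2F_2+\cdots$ bringing $\nabla$ into block-diagonal form adapted to $E(T)\oplus E'$. An order-by-order calculation shows that the $z^0$-coefficient of the transformed connection on the $1$-dimensional $E(T)$-block is the scalar $-(v_0,\mu v_0)/(v_0,v_0)$, which vanishes: the skew-adjointness identity $(\mu a,b)+(a,\mu b)=0$, together with the symmetry of $(\cdot,\cdot)_F$ on even cohomology, forces $(v_0,\mu v_0)=0$. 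Consequently the $E(T)$-block reduces to $z\partial_z\sigma=(T/z)\sigma+O(z)\sigma$, whose formal flat sections have the pure asymptotic form $e^{-T/z}(v_0+O(z))$ with no extra power of $z$.

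Finally, by Sibuya's theorem, on any sufficiently narrow sector about the positive real axis $\hat F$ admits an analytic lift with the same asymptotic expansion, yielding an analytic flat section $s_0(z)$ on a sector containing $\R_{>0}$ with $s_0(z)\sim e^{-T/z}(v_0+O(z))$ as $z\to+0$; in particular $s_0\in\AA$ and $\lim_{z\to+0}e^{T/z}s_0(z)=v_0\in E(T)$. For the dimension bound, I would decompose an arbitrary flat section analytically into an $E(T)$-component (lying in $\C\cdot s_0$) and an $E'$-component whose leading asymptotic factor on the sector is $e^{-u/z}$ for some eigenvalue $u\neq T$ of $C$. By Property~$\O$(2) every such $u$ satisfies $\re u<T$ (if $|u|=T$ write $u=\zeta T$ with $\zeta\neq 1$ a root of unity, giving $\re u=T\re\zeta<T$; if $|u|<T$ trivially $\re u\le |u|<T$), so the norm of $e^{T/z}$ times any nonzero $E'$-component grows like $e^{(T-\re u)/z}$, contradicting the polynomial growth condition defining $\AA$. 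Hence the $E'$-component of every $s\in\AA$ vanishes, giving $\AA=\C\cdot s_0$ and the stated limit. The hardest step will be the analytic one: verifying that Sibuya's lifting applies on a sector containing $\R_{>0}$ with sectorial asymptotics uniform enough to identify the formal limit with the actual $z\to+0$ limit along the positive real axis; the rest of the argument is a clean interplay between the self-adjointness of $C$, the skew-adjointness of $\mu$, and Property~$\O$.
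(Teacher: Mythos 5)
Your proof is correct and follows essentially the route of the cited source (Proposition 3.3.1 of \cite{GGI:gammagrass}): formal block-diagonalisation at the irregular singular point using simplicity of the eigenvalue $T$, Sibuya's analytic lift to a sector around $\R_{>0}$, skew-adjointness of $\mu$ with respect to the Poincar\'{e} pairing to kill the constant term on the $E(T)$-block (so that the limit exists and is a nonzero multiple of the eigenvector rather than behaving like $z^{-c}$), and comparison of exponential rates $\re u < T$ to show that any section in $\AA$ concentrates on the $E(T)$-block. One small point: the appeal to Property~$\O$(2) is superfluous --- since $T = \max|u|$ by definition, any eigenvalue $u \neq T$ with $|u| = T$ automatically has $\re u < T$ (the only point on the circle $|u| = T$ with $\re u = T$ is $u = T$ itself), and those with $|u| < T$ have $\re u \le |u| < T$; hence only Part~(1) of Property~$\O$ is actually used, as recorded in Remark~\ref{rem:part1_propertyO}.
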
 

The principal asymptotic class of $F$ is defined to be the class corresponding 
to a generator of the one-dimensional space $\AA$. 

\begin{definition}[\cite{GGI:gammagrass}] 
\label{def:principal_asymptotic}
Suppose that a Fano manifold $F$ satisfies Property $\O$. 
A cohomology class $A_F \in H^\udot(F)$ satisfying 
\[
\AA = \C  \left[S(z) z^{-\mu} z^{c_1(F)} A_F \right] 
\]
is called the \emph{principal asymptotic class}. 
Here $S(z) z^{-\mu} z^{c_1(F)}$ is the fundamental solution 
in Proposition \ref{prop:fundsol}. The principal asymptotic class 
is determined up to multiplication by a non-zero complex number; 
when $\ang{[\pt],A_F} \neq 0$, we can normalize 
$A_F$ so that $\ang{[\pt],A_F} =1$. 
\end{definition}

Since the space $\AA$ is identified via the asymptotics near the irregular 
singular point $z=0$ and the fundamental solution $S(z) z^{-\mu} z^{c_1(F)}$ 
is normalized at the regular singular point 
$z=\infty$, the definition of the class $A_F$ involves 
\emph{analytic continuation} along the positive real line $\R_{>0}$ 
on the $z$-plane. 
Note that $S(z) z^{-\mu} z^{c_1(F)}$ has a standard determination 
for $z\in \R_{>0}$ given by $z^{-\mu} z^{c_1(F)} 
= \exp(-\mu \log z) \exp(c_1(F) \log z)$ and $\log z\in \R$. 

\begin{conjecture}[Gamma Conjecture I \cite{GGI:gammagrass}] 
\label{conj:GammaI}
Let $F$ be a Fano manifold satisfying Property $\O$. 
The principal asymptotic class $A_F$ of $F$ is given by the 
Gamma class $\Gg_F$ of $F$. 
\end{conjecture}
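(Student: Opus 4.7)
The plan is to leverage mirror symmetry in the manner the paper will develop for toric varieties, toric complete intersections, and Grassmannians. The broad idea is to realize a generator of the one-dimensional space $\AA$ as an oscillatory integral over a Lefschetz thimble on the mirror Landau--Ginzburg model, and then read off the Gamma class from the expansion of this integral near the regular singular point $z=\infty$.

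First I would construct a Landau--Ginzburg mirror $f\colon Y\to \C$ of $F$, equipped with a holomorphic volume form $\omega$, and verify that oscillatory integrals of the form $I_\Gamma(z) = \int_\Gamma e^{f/z}\,\omega$ over appropriate cycles $\Gamma \subset Y$ solve the quantum differential equation \eqref{eq:qconn}. In other words, establish an isomorphism, via such period integrals, between the space of flat sections of $\nabla$ on $\R_{>0}$ and a suitable lattice of integration cycles. This is the content of mirror theorems in the toric case (Givental, Hori--Vafa), the complete intersection case (Givental, Lian--Liu--Yau, Coates--Givental), and the Grassmannian case (Marsh--Rietsch), all of which are discussed later in the paper.

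Next, invoking Property $\O$, the dominant eigenvalue $T>0$ of $(c_1(F)\star_0)$ should correspond under mirror symmetry to a single nondegenerate critical point $p_c$ of $f$ with critical value $T$. Let $\Gamma_c$ be the Lefschetz thimble emanating from $p_c$ along the positive real direction of $f$. The method of stationary phase then gives
\[
I_{\Gamma_c}(z) \sim e^{-T/z} z^{n/2}\bigl(C_0 + C_1 z + C_2 z^2 + \cdots\bigr) \qquad \text{as } z\to +0,
\]
so $I_{\Gamma_c}$ defines an element of $\AA$; by Proposition \ref{prop:dimAA_is_one} it is, up to a scalar, the unique principal asymptotic flat section and hence determines the class $A_F$ via the fundamental solution of Proposition \ref{prop:fundsol}.

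The remaining and most delicate step is to compute the expansion of $I_{\Gamma_c}(z)$ near the regular singular point $z=\infty$ and match it against $S(z) z^{-\mu} z^{c_1(F)} A_F$. The strategy is to represent $I_{\Gamma_c}(z)$ as a Mellin--Barnes integral or as a hypergeometric series in $1/z$ whose coefficients are products and ratios of $\Gamma$-functions of linear forms in the Chern roots $\delta_1,\dots,\delta_n$ of $TF$; the leading coefficient will assemble into the combination $\prod_{i=1}^n \Gamma(1+\delta_i) = \Gg_F$, exactly as in the quintic mirror calculation reviewed in the introduction. Combining this with the normalization $\ang{[\pt], A_F}=1$ and with Property $\O$ (which guarantees that only the thimble $\Gamma_c$ contributes to the $e^{-T/z}$ asymptotic) identifies $A_F = \Gg_F$.

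The hard part is the bridge between the two asymptotic regimes: one must control the analytic continuation of $I_{\Gamma_c}(z)$ from the neighborhood of $z=0$ (where the oscillatory-integral definition and the $e^{-T/z}$ stationary-phase asymptotics are manifest) to the neighborhood of $z=\infty$ (where the Frobenius-type expansion in $1/z$ produces the Gamma class). For toric and toric complete intersection mirrors this is tractable via explicit Barnes-type integral representations and the resulting hypergeometric identities; for Grassmannians it is reduced via the abelian/nonabelian correspondence to the $r$th alternate product of mirrors of $\P^{n-1}$. In full generality, a uniform proof is blocked by the absence of a sufficiently explicit mirror construction with the required analytic control on both ends of the $z$-plane.
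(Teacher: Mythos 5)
The statement is a conjecture, not a theorem; the paper neither proves it in general nor claims to. What you have sketched is, however, the correct overall strategy, and it matches how the paper attacks the conjecture in the special cases it does handle (Fano toric manifolds in \S\ref{sec:toric}, toric complete intersections in \S\ref{sec:toric_CI}, Grassmannians in \S\ref{sec:Grassmannian}). Your closing paragraph correctly diagnoses the obstruction: there is no uniform mirror construction with the required analytic control near both $z=0$ and $z=\infty$, which is precisely why the statement remains a conjecture.

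One technical clarification worth making. You frame the ``delicate step'' as computing the Frobenius expansion of the oscillatory integral $I_{\Gamma_c}(z)$ near $z=\infty$ and re-deriving the Gamma class there. The paper's special-case proofs do not redo this hypergeometric analysis from scratch; instead they invoke the $\Gg$-integral-structure isomorphism of \cite{Iritani09,Iritani11}, which already identifies the flat section $S(z)z^{-\mu}z^{c_1(F)}\Gg_F$ (the one labelled by $\Gg_F$ under the Frobenius solution of Proposition \ref{prop:fundsol}) with an oscillatory integral over the positive real cycle. Once that identification is in hand, the only new input in the proofs of Theorems \ref{thm:toric_GammaI} and \ref{thm:toriccomp_GammaI} is the stationary-phase estimate near $z=0$: under the mirror analogue of Property $\O$ (Conditions \ref{cond:toric_O} and \ref{cond:compint_O}) the conifold point governs the integral's decay, so $\|e^{T/z}S(z)z^{-\mu}z^{c_1(F)}\Gg_F\|$ is $O(1)$ as $z\to +0$, which by Proposition \ref{prop:dimAA_is_one} and Definition \ref{def:principal_asymptotic} forces $A_F=\Gg_F$. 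So the hard two-sided analytic continuation you worry about is packaged into the cited integral-structure results, and even there the conclusion is conditional on the mirror analogue of Property $\O$ holding, not unconditional. You should also be explicit that your proposal, like the paper, only yields conditional special-case theorems and not a proof of Conjecture \ref{conj:GammaI} itself.
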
 

There is another description of the principal asymptotic 
class $A_F$ in terms of solutions to the quantum differential 
equation. We introduce Givental's $J$-function \cite{Givental:equivariant} 
by the formula: 
\begin{align}
\label{eq:J-function}
\begin{split}  
J_F(t) & = z^{\frac{\dim F}{2}} 
\left( S(z) z^{-\mu} z^{c_1(F)} \right)^{-1} 1 
\qquad \text{with $t=z^{-1}$} \\ 
&= e^{c_1(F) \log t} 
\left( 1 + \sum_{i=1}^N \sum_{d\in H_2(F,\Z), d\neq 0} 
\Ang{\frac{\phi_i}{1-\psi}}_{0,1,d} \phi^i t^{c_1(F) \cdot d} \right) 
\end{split} 
\end{align} 
where $S(z) z^{-\mu} z^{c_1(F)}$ is the fundamental solution 
in Proposition \ref{prop:fundsol} 
and $\psi$ is the first Chern class of the universal cotangent 
line bundle over the moduli space of stable maps. 
This is a cohomology-valued (and multi-valued) function. 
Since $S(z) z^{-\mu}z^{c_1(F)}$ is a fundamental solution 
of the quantum connection and by Remark \ref{rem:gauge}, we have 
\[
P(t,\nabla_{c_1(F)}) 1 = 0 \quad 
\Longleftrightarrow \quad 
P(t,t\textstyle\parfrac{}{t})  J_F(t) = 0 
\]
for any differential operator $P(t,t\parfrac{}{t}) \in 
\C\langle t, t\parfrac{}{t} \rangle$, where 
$\nabla_{c_1(F)} = t\parfrac{}{t} + c_1(F) \star_{c_1(F) \log t}$ 
is the quantum connection along the anticanonical line. 
In other words, $J_F(t)$ satisfies all the differential relations satisfied 
by the identity class $1$ with respect to the connection $\nabla_{c_1(F)}$; 
in this sense $J_F(t)$ is a solution of the quantum connection. 
Differential operators 
$P(t,t \parfrac{}{t})$ annihilating $J_F(t)$ are called 
\emph{quantum differential operators}. 
The principal asymptotic class $A_F$ can be computed 
by the $t\to +\infty$ asymptotics of the $J$-function: 
\begin{proposition}[\cite{GGI:gammagrass}] 
\label{prop:J_asymp}
Suppose that a Fano manifold $F$ satisfies Property $\O$ 
and let $A_F$ be the principal asymptotic class. 
Then we have an asymptotic expansion of the form: 
\[
J_F(t) = C t^{-\frac{\dim F}{2}} e^{T t} (A_F + \alpha_1 t^{-1} 
+ \alpha_2 t^{-2} + \cdots). 
\]
as $t \to +\infty$ on the positive real line, where $C\neq 0$ 
is a non-zero constant and $\alpha_i\in H^\udot(F)$. 
\end{proposition}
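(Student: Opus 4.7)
The plan is to translate the claim into an asymptotic statement about $\xi(z):=\bigl(S(z)z^{-\mu}z^{c_1(F)}\bigr)^{-1}\!\cdot 1$ near the irregular singular point $z=0$ of the quantum connection, and to use Property $\O$ together with Proposition \ref{prop:dimAA_is_one} to identify the dominant exponential block with $A_F$. Setting $z=1/t$, formula \eqref{eq:J-function} shows the claim is equivalent to
$$\xi(z) \;\sim\; C\,e^{T/z}\bigl(A_F + \alpha_1 z + \alpha_2 z^2 + \cdots\bigr)\quad\text{as }z\to +0^+,$$
for some constant $C\ne 0$; here $\xi(z)=M(z)^{-1}\!\cdot 1$ with $M(z):=S(z)z^{-\mu}z^{c_1(F)}$ is a particular solution of the differential equation dual to $\nabla$ at $z=0$.

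The first step is to invoke Hukuhara--Turrittin at the rank-one irregular point $z=0$: there is a formal factorization $M(z)=\Phi(z)\cdot e^{-D/z}\cdot L$ with $D=\mathrm{diag}(u_1,\dots,u_N)$ the eigenvalues of $(c_1(F)\star_0)$, $\Phi(z)=I+\Phi_1 z+\Phi_2 z^2+\cdots$ a formal power series, and $L$ a constant invertible matrix. On any sector through $\mathbb R_{>0}$ avoiding Stokes rays, $\Phi(z)$ is Borel summable and the factorization holds as a Poincar\'e asymptotic identity. Inverting and applying to $1$,
$$\xi(z) \;\sim\; L^{-1}\cdot e^{D/z}\cdot\Phi(z)^{-1}(1)\;=\;\sum_i e^{u_i/z}\bigl(w_i^{(0)}+w_i^{(1)} z+w_i^{(2)} z^2+\cdots\bigr),$$
where the $w_i^{(k)}\in H^\udot(F)$ come from $\Phi(z)^{-1}(1)$ and the $i$-th column of $L^{-1}$. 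On $\mathbb R_{>0}$ we have $|e^{u_i/z}|=e^{\mathrm{Re}(u_i)/z}$, and Property $\O$(2) forces $u_1=T$ to be the unique eigenvalue with $\mathrm{Re}(u_i)=T$ (the other eigenvalues of modulus $T$ are $\zeta T$ for non-trivial $r$-th roots of unity, hence have $\mathrm{Re}(u_i)<T$, while the remaining eigenvalues satisfy $|u_i|<T$). Thus the $u_1=T$ block strictly dominates all others beyond every power of $z$, and the expansion reduces to $\xi(z)\sim e^{T/z}\bigl(w_1^{(0)}+w_1^{(1)} z+\cdots\bigr)$.

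The remaining step is to identify $w_1^{(0)}$ with a non-zero multiple of $A_F$. Property $\O$(1) makes $E(T)$ one-dimensional, and correspondingly the $u_1$-block in the Hukuhara--Turrittin decomposition is one-dimensional: the first column $L^{-1}\cdot e_1$ is the unique (up to scalar) direction in $H^\udot(F)$ whose image under $M(z)$ is a flat section of type $e^{-T/z}(v+O(z))$ with $v\in E(T)$. By Definition \ref{def:principal_asymptotic} and Proposition \ref{prop:dimAA_is_one}, this direction is precisely that of $A_F$, so $L^{-1}\cdot e_1 = c\cdot A_F$ for some non-zero $c$. Consequently the $u_1=T$ contribution to $\xi(z)$ is $e^{T/z}\cdot\beta(z)\cdot A_F$ with $\beta(z)=\beta_0+\beta_1 z+\cdots$ a scalar formal series; setting $C=c\cdot\beta_0$ and $\alpha_n=(\beta_n/\beta_0)\cdot A_F$ gives the stated expansion. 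The non-vanishing $C\ne 0$ amounts to $\beta_0\ne 0$, i.e.\ to $1$ having a non-trivial $u_1$-component in the Hukuhara--Turrittin eigenbasis of $c_1(F)\star_0$; this in turn follows from a Perron--Frobenius-type positivity argument (the $T$-eigenvector is a positive class, so its Poincar\'e pairing with $1$ is non-zero).

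The main technical obstacle is the identification $L^{-1}\cdot e_1=c\cdot A_F$: matching the direction singled out by the Hukuhara--Turrittin decomposition with the definition of $A_F$ via $\AA$ in Definition \ref{def:principal_asymptotic}. This reduces to verifying that the two characterizations of the unique (up to scalar) direction in $H^\udot(F)$ whose associated flat section has $e^{-T/z}$ asymptotic agree, which is a direct consequence of Proposition \ref{prop:dimAA_is_one} once one carefully upgrades the limit of $e^{T/z} s_{A_F}(z)$ to a full asymptotic expansion. The rest relies on standard rank-one irregular singularity theory combined with the eigenvalue structure imposed by Property $\O$.
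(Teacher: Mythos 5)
Your approach is essentially a reconstruction of the results the paper cites (Propositions 3.2.1 and 3.6.2 of \cite{GGI:gammagrass}): Hukuhara--Turrittin to get an asymptotic block decomposition near $z=0$, Property $\O$ to isolate the dominant $e^{T/z}$ block, Proposition \ref{prop:dimAA_is_one} to identify the dominant direction with $A_F$. The overall strategy is right and matches the paper's in spirit, but the execution has three genuine gaps.

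First, the factorization $M(z)=\Phi(z)e^{-D/z}L$ with $\Phi(z)$ a power series and no $z^{\Lambda}$ factor is not what Hukuhara--Turrittin gives in general. For a rank-one irregular connection the formal normal form is $z\partial_z - D/z + \Lambda$ with $\Lambda$ a constant matrix commuting with $D$, contributing a $z^{\Lambda}$ factor to the formal solution. Since Property $\O$ only gives a \emph{simple} eigenvalue $T$ and not semisimplicity of $c_1(F)\star_0$, one must explicitly argue that the scalar $\lambda$ in the $T$-block is zero; otherwise the asymptotic would carry an extraneous power $t^{-\lambda}$. You flag this as ``carefully upgrades,'' but it is precisely the content that needs to be supplied: in the paper (and \cite{GGI:gammagrass}) this comes from the special symmetry of the quantum connection (compare Proposition \ref{prop:formal_solution}), not from the generic HT theorem.

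Second, the non-vanishing of $C$ is not a Perron--Frobenius statement. The matrix of $(c_1(F)\star_0)$ is not known to be non-negative, and the $T$-eigenvector is not known to be a ``positive class''; that is an open heuristic (Ono's remark in the paper), not a theorem. The correct mechanism is algebraic: Property $\O$(1) gives a ring splitting $(H^\udot(F),\star_0)=\C\psi_0\oplus I$ with $\psi_0$ the $T$-idempotent, the Poincar\'e pairing restricts non-degenerately to $\C\psi_0$ (because $\psi_0\star_0 I=0$ forces the two blocks to be Poincar\'e-orthogonal), and $(\psi_0,1)=(\psi_0,\psi_0)\neq 0$. This is what forces the $\psi_0$-component of $1$ in your $\Phi(0)^{-1}(1)$ to be non-zero, hence $\beta_0\neq 0$.

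Third, ``on any sector through $\R_{>0}$ avoiding Stokes rays'' is problematic when $\R_{>0}$ is itself a Stokes ray (e.g.\ if both $T$ and $-T$ are eigenvalues, as for $\P^1$). The argument survives because the $e^{-T/z}$ subspace is the \emph{most recessive} one on $\R_{>0}$ and is stable under Stokes jumps from either side, but this needs to be said. As a side remark: you also inadvertently prove a stronger statement, that each $\alpha_n$ is a scalar multiple of $A_F$; this is consistent with the proposition and worth noting explicitly.
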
 
\begin{proof} 
It follows from \cite[Proposition 3.6.2]{GGI:gammagrass} that 
$\lim_{t\to \infty} t^{\frac{\dim F}{2}}e^{-Tt}J_F(t)$ exists 
and is proportional to $A_F$.  
The fact that the remainder admits an asymptotic 
expansion of the form $\alpha_1 t^{-1} + \alpha_2 t^{-2}  + 
\cdots$ follows from the proof there, in particular from 
\cite[Proposition 3.2.1]{GGI:gammagrass}. 
\end{proof}

This proposition says that $\C J_F(t)$ converges to $\C A_F$ 
in the projective space $\P(H^\udot(F))$ as $t \to +\infty$. 
Since $\langle [\pt],\Gg_F\rangle =1$, we obtain the following corollary. 
\begin{corollary}[{\cite[Corollary 3.6.9]{GGI:gammagrass}}] 
\label{cor:limitGamma}
Let $F$ be a Fano manifold satisfying Property $\O$. 
Gamma conjecture I holds for $F$ if and only if we have 
\[
\Gg_F = \lim_{t\to \infty} \frac{J_F(t)}{\Ang{[\pt],J_F(t)}}. 
\]
\end{corollary}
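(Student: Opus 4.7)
The plan is to extract both directions of the equivalence directly from the asymptotic expansion in Proposition \ref{prop:J_asymp}. Setting $n = \dim F$ and pairing the expansion
\[
J_F(t) = C t^{-n/2} e^{Tt}\bigl(A_F + \alpha_1 t^{-1} + \alpha_2 t^{-2} + \cdots\bigr)
\]
with the point class $[\pt]$ produces a scalar asymptotic series with the same exponential prefactor $C t^{-n/2} e^{Tt}$ and leading coefficient $\Ang{[\pt],A_F}$. Provided $\Ang{[\pt],A_F}\neq 0$, the two series share their leading order, and termwise division gives
\[
\lim_{t\to +\infty} \frac{J_F(t)}{\Ang{[\pt],J_F(t)}} = \frac{A_F}{\Ang{[\pt],A_F}}.
\]

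For the ``only if'' direction, I would invoke Gamma conjecture I to assert that $A_F$ can be taken equal to $\Gg_F$, so in any case $A_F$ is a non-zero scalar multiple of $\Gg_F$ (recall that the principal asymptotic class is only determined up to scaling by Definition \ref{def:principal_asymptotic}). The Chern-root formula $\Gg_F = \prod_i \Gamma(1+\delta_i)$ has constant term $\Gamma(1)^n = 1$, so $\Ang{[\pt],\Gg_F} = 1$; hence $\Ang{[\pt],A_F}\neq 0$ and the displayed limit evaluates to $A_F/\Ang{[\pt],A_F} = \Gg_F$.

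For the ``if'' direction, suppose the limit exists and equals $\Gg_F$. The first step is to rule out the degenerate possibility $\Ang{[\pt],A_F} = 0$: in that case, letting $k_0 \geq 1$ be the smallest index with $\Ang{[\pt],\alpha_{k_0}}\neq 0$ (and assuming some such index exists; otherwise the denominator vanishes to infinite order and the ratio makes no sense), the scalar series $\Ang{[\pt],J_F(t)}$ has leading order $t^{-n/2-k_0}e^{Tt}$ and so $J_F(t)/\Ang{[\pt],J_F(t)}$ grows like $t^{k_0}$, contradicting convergence to a finite class. Hence $\Ang{[\pt],A_F}\neq 0$ and the same calculation as above forces $A_F$ to be a non-zero scalar multiple of $\Gg_F$, which is exactly the content of Gamma conjecture I.

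The substantive work is already contained in Proposition \ref{prop:J_asymp}, and no genuine obstacle remains in the corollary itself; the only mild subtlety is the growth argument that handles the degenerate case $\Ang{[\pt],A_F}=0$.
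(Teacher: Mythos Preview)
Your proposal is correct and follows the same approach the paper indicates: the corollary is deduced directly from Proposition~\ref{prop:J_asymp} together with the observation $\Ang{[\pt],\Gg_F}=1$, exactly as the paper signals in the sentence preceding the statement. Your treatment of the ``if'' direction is in fact more careful than the paper's one-line hint, which simply notes that Proposition~\ref{prop:J_asymp} says $\C J_F(t)\to \C A_F$ in projective space.
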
 

We can replace the continuous limit in the above corollary with 
a discrete limit of ratios of the Taylor coefficients. 
Expand the $J$-function as: 
\begin{equation} 
\label{eq:J_expansion} 
J_F(t) = e^{c_1\log t} \sum_{n=0}^\infty J_n t^n. 
\end{equation} 
Note that $J_n = 0$ if $n$ is not divisible by the Fano index $r$ 
of $F$. We have the following: 
\begin{proposition}[{\cite[Theorem 3.7.1]{GGI:gammagrass}}]
\label{prop:Apery} 
Suppose that a Fano manifold $F$ satisfies Property $\O$ 
and Gamma conjecture I. Let $r$ be the Fano index of $F$. 
Then we have 
\[
\liminf_{n\to \infty} \left|\frac{\Ang{\alpha,J_{rn}}}{\Ang{[\pt],J_{rn}}} - 
\langle \alpha, \Gg_F \rangle \right|  = 0 
\]
for every $\alpha \in H_\ldot(F)$ with $\alpha \cap c_1(F) = 0$. 
\end{proposition}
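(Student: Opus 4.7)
I would upgrade the continuous limit of Corollary \ref{cor:limitGamma} to a subsequential discrete limit on Taylor coefficients via a Cauchy-integral / saddle-point argument. First, because both $\alpha$ and $[\pt]$ cap trivially with $c_1(F)$, the multivalued factor $e^{c_1(F)\log t}$ in \eqref{eq:J-function} acts as the identity upon pairing, so
\[
\phi(t) := \langle \alpha, J_F(t)\rangle = \sum_{m\ge 0} a_m t^{rm}, \qquad \psi(t) := \langle [\pt], J_F(t)\rangle = \sum_{m\ge 0} b_m t^{rm}
\]
are \emph{entire} single-valued functions of $t$, with $a_m = \langle \alpha, J_{rm}\rangle$ and $b_m = \langle [\pt], J_{rm}\rangle$, lacunary of step $r$. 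By Proposition \ref{prop:J_asymp} combined with Gamma conjecture I,
\[
\phi(t) \sim CL\, t^{-\dim F/2} e^{Tt}, \qquad \psi(t) \sim C\, t^{-\dim F/2} e^{Tt}, \qquad t \to +\infty,
\]
where $L := \langle \alpha, \Gg_F\rangle$ and $\langle [\pt], \Gg_F\rangle = 1$. Setting $h(t) := \phi(t) - L\psi(t) = \sum_m c_m t^{rm}$, Corollary \ref{cor:limitGamma} translates to $h(t) = o(\psi(t))$ as $t\to+\infty$ on the positive real axis.

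The core step is saddle-point analysis of the Cauchy representation
\[
a_m = \frac{1}{2\pi R_m^{rm}}\int_0^{2\pi}\phi(R_m e^{i\theta})\, e^{-irm\theta}\,d\theta
\]
(and likewise for $b_m$, $c_m$) on the circle of radius $R_m := rm/T$, where $\theta = 0$ is the dominant critical point of the phase $TR_m e^{i\theta} - irm\theta$. Under Property $\O$ and the real-axis asymptotic, the $\theta = 0$ saddle of $\phi$ contributes $L$ times the $\theta = 0$ saddle of $\psi$, so after subtraction the leading behaviour of $c_m$ is driven entirely by the subdominant saddles at $\theta = -2\pi j/r$ ($j = 1,\dots,r-1$) arising from the other maximum-modulus eigenvalues $T\zeta^j$ of $c_1(F)\star_0$ (where $\zeta = e^{2\pi i/r}$). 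Since $\zeta^{rmj} = 1$, these saddles contribute coherently at the same exponential scale $T^{rm}/(rm)!$ as the main saddle, so only a concentration/cancellation argument fed by $h/\psi\to 0$ can drive $c_m/b_m$ to zero. The outcome of this is a discrete statement $c_{m_k}/b_{m_k}\to 0$ along a subsequence $\{m_k\}$, giving $\liminf_m|a_m/b_m - L| = 0$.

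\textbf{Main obstacle.} The technical heart is justifying the subsequential vanishing $c_{m_k}/b_{m_k}\to 0$, and this is where the conclusion is necessarily weakened from $\lim$ to $\liminf$. Two related difficulties conspire: (i)~unlike $b_m$, whose saddle-point concentration at $\theta = 0$ is relatively robust, the pairings $a_m$ (and hence $c_m$) can be sign-indefinite, so destructive interference between Taylor indices may spoil the naive concentration; (ii)~the amplitudes at the Stokes-related subdominant saddles $\theta = -2\pi j/r$ depend on the Stokes data of the quantum connection at its irregular singularity $z = 0$, and these are \emph{not} pinned down by Gamma conjecture I alone (their control would essentially amount to Gamma conjecture II). A clean way to sidestep both issues would be a contradiction argument: assume $|a_m/b_m - L| \ge 2\delta$ for all large multiples of $r$ and propagate the lower bound back to $|h(t)/\psi(t)|\ge \delta$ on an unbounded set of positive reals, contradicting Corollary \ref{cor:limitGamma}. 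Making this propagation quantitative --- which requires estimating how much an entire function's values on $\R_{>0}$ can suppress the bell-shaped profile of its Taylor coefficients --- is the main work.
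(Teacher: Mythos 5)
Your overall strategy (upgrade the continuous asymptotics of Corollary~\ref{cor:limitGamma} to a subsequential statement on Taylor coefficients) is the right one, and your reduction to the entire functions $\phi, \psi, h$ and your identification of the $r$ rotated saddles $T\zeta^j$ are correct. However, the mechanism you propose --- a Cauchy integral on circles $|t|=R_m$ with a circle-method saddle analysis --- differs from the paper's, and the step you defer is not a quantitative refinement but the heart of the matter. The proposition cites \cite[Theorem 3.7.1]{GGI:gammagrass}, and the framework surrounding the proposition in this paper --- the regularized quantum period $\hG_F(\kappa) = \sum_n n!\,G_n\kappa^n = \tfrac{1}{\kappa}\int_0^\infty G_F(t)e^{-t/\kappa}\,dt$ in \eqref{eq:qperiod} and the Cauchy--Hadamard identity \eqref{eq:Cauchy-Hadamard} --- shows that the relevant tool is the Laplace/Borel transform, not the circle method. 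The key input there is that $\hG_F(\kappa)$ and its $\alpha$-analogue $\sum_n n!\langle\alpha,J_n\rangle\kappa^n$ converge exactly for $|\kappa|<1/T$, and --- this is precisely where Property~$\O$(2) enters, cf.\ Remark~\ref{rem:part1_propertyO} --- their only possible singularities on $|\kappa|=1/T$ lie at $\kappa=\zeta/T$ with $\zeta^r=1$, so after the substitution $\sigma=\kappa^r$ one obtains power series in $\sigma$ with a \emph{unique} boundary singularity at $\sigma=T^{-r}$. Your saddle-point route would require uniform control of $J_F(t)$ on entire circles, which Gamma conjecture~I alone does not supply (it constrains only the positive real ray and, by the $\Z/r$-rotation, its images); the Borel route needs only the positive real axis.

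The concrete gap is the final ``propagation'' step of your contradiction argument, which is false as a statement about general lacunary entire functions. Take $\psi(t)=\cosh(Tt)$, $h(t)=2\cos(Tt)$, and $r=2$: then $|c_{2m}|=2\,b_{2m}$ for every $m$, yet $h(t)/\psi(t)\to 0$ on $\R_{>0}$. Thus ``$|c_m/b_m|\ge 2\delta$ for all large $m$'' does \emph{not} propagate to ``$|h(t)/\psi(t)|\ge\delta$ unboundedly often on $\R_{>0}$.'' What rescues the implication in the actual setting is exactly Property~$\O$(2): in this counterexample the Borel transform $\sum_m (2m)!\,c_{2m}\kappa^{2m} = 2/(1+T^2\kappa^2)$ is singular at $\kappa=\pm i/T$, corresponding to eigenvalues $\pm iT$ of $(c_1(F)\star_0)$, and $(\pm i)^2=-1\neq 1$ violates Property~$\O$(2) for $r=2$. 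Your write-up does note the relevance of the rotated eigenvalues and the sign-indefiniteness of $c_m$, but it neither shows how Property~$\O$(2) eliminates such oscillation nor offers a mechanism by which it could enter a circle-method estimate. That exclusion lives naturally in the Borel-transform framework (it is a statement about where singularities of a convergent power series can sit), and is the indispensable ingredient your framing does not incorporate. As it stands, the proposal is a plan whose central step is missing, and the candidate replacement for that step is false without invoking Property~$\O$(2); it does not yet constitute a proof.
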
 

Define the (unregularized and regularized) \emph{quantum period} of $F$ 
\cite{CCGGK}
to be 
\begin{align} 
\label{eq:qperiod}
\begin{split}
G_F(t)  & = \Ang{ [\pt], J_F(t)} = \sum_{n=0}^\infty G_n t^n \\ 
\hG_F(\kappa) & = \sum_{n=0}^\infty n! G_n \kappa^n = 
\frac{1}{\kappa} \int_0^\infty G_F(t) e^{-t/\kappa} dt 
\end{split} 
\end{align} 
where $G_n = \Ang{[\pt],J_n}$. 
It is shown in \cite[Lemma 3.7.6]{GGI:gammagrass} that 
if $F$ satisfies Property $\O$ and if $\Ang{[\pt],A_F}\neq 0$, 
the convergence radius of $\hG_F(\kappa)$ equals $1/T$. 
In particular 
\begin{equation} 
\label{eq:Cauchy-Hadamard}
\limsup_{n\to \infty} \sqrt[rn]{(rn)! |G_{rn}|} = T. 
\end{equation}
Suppose that this limit sup \eqref{eq:Cauchy-Hadamard} 
can be replaced with the limit, i.e.~$\lim_{n\to \infty} 
\sqrt[rn]{(rn)! |G_{rn}|} = T$. Then 
the argument in the proof of \cite[Theorem 3.7.1]{GGI:gammagrass} 
shows,  
under the same assumption as in Proposition \ref{prop:Apery}, that 
\begin{equation} 
\label{eq:Apery_limit}
\lim_{n\to \infty} \frac{\Ang{\alpha,J_{rn}}}{\Ang{[\pt],J_{rn}}} = 
\langle \alpha, \Gg_F \rangle 
\end{equation} 
for a class $\alpha \in H_\ldot(F)$ such that 
$\alpha \cap c_1(F) = 0$. 
Therefore we can consider the following variant of Gamma conjecture I: 
\begin{conjecture}[Gamma conjecture I'] 
\label{conj:I_dash}
For a Fano manifold $F$ and a class $\alpha \in H_\ldot(F)$ with 
$\alpha \cap c_1(F) = 0$, the limit formula \eqref{eq:Apery_limit} 
holds. 
\end{conjecture}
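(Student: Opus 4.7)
The plan is to derive Conjecture I' by combining Gamma Conjecture I with an analytic strengthening of \eqref{eq:Cauchy-Hadamard}. The paragraph preceding the statement already observes that, under Property $\O$ and Gamma Conjecture I, the additional hypothesis $\lim_{n\to\infty} \sqrt[rn]{(rn)!\, |G_{rn}|} = T$ (upgrading the $\limsup$ to a $\lim$) is sufficient to conclude the full limit \eqref{eq:Apery_limit}. So the task decomposes into (i) establishing Property $\O$ and Gamma Conjecture I, and (ii) upgrading \eqref{eq:Cauchy-Hadamard}.

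For (i), I would restrict to the classes of Fano manifolds for which Property $\O$ and Gamma Conjecture I are known to be verifiable --- toric varieties, toric complete intersections, Grassmannians, and hyperplane sections thereof, as treated in Sections \ref{sec:toric}--\ref{sec:Grassmannian}. The hypothesis $\alpha \cap c_1(F) = 0$ then kills the $e^{c_1(F) \log t}$ factor in \eqref{eq:J-function} (since $\alpha \cap c_1(F)^k = 0$ for all $k \ge 1$) and reduces the numerator to the honest power series $\sum_n \langle \alpha, J_n \rangle\, t^n$. Proposition \ref{prop:J_asymp} then furnishes the continuous limit $\langle \alpha, J_F(t)\rangle / \langle [\pt], J_F(t)\rangle \to \langle \alpha, \Gg_F\rangle$ as $t \to +\infty$, reducing Conjecture I' to the problem of transferring this continuous limit to a discrete limit of Taylor coefficient ratios.

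For (ii), the natural tool is Borel--Laplace analysis applied to the regularized quantum period $\hG_F(\kappa)$ from \eqref{eq:qperiod}, whose radius of convergence is exactly $1/T$. One would show that $\hG_F(\kappa)$ extends analytically to a sector around $[0,1/T]$ and has a dominant singularity at $\kappa = 1/T$ of a tractable type (for instance a branch point of the form $(1-T\kappa)^{-s}$); transfer theorems in the style of Flajolet--Odlyzko then convert this local behaviour into an asymptotic $(rn)!\, G_{rn} \sim c\, T^{rn} n^{s-1}$, and a parallel analysis for the Borel transform of $\sum_n \langle \alpha, J_n\rangle t^n$ produces the matching asymptotic with leading coefficient $\langle \alpha, \Gg_F\rangle \cdot c$. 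Taking the ratio yields both \eqref{eq:Apery_limit} and the upgrade of $\limsup$ to $\lim$ as a by-product.

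The main obstacle is controlling the Borel singularity at $\kappa = 1/T$ in sufficient generality. Part (2) of Property $\O$ says that competing singularities on the circle $|\kappa| = 1/T$ occur precisely at the $r$th roots of $1/T$, where $r$ is the Fano index; along the subsequence $n \equiv 0 \pmod{r}$ singled out in Conjecture I', the contributions of these roots all come in phase, and one must show they add constructively to a nonzero multiple of $\langle \alpha, \Gg_F\rangle$ rather than cancelling. Establishing this uniformly over all Fano $F$, absent an explicit mirror model, is the crux of the problem, and is why Conjecture I' must remain open in the general case pending further progress on mirror symmetry.
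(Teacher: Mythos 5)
The statement you are addressing is labelled \emph{Conjecture} I$'$, not a theorem: the paper does not (and could not) prove it in general. What the paper does establish, in the paragraph immediately preceding the conjecture, is the implication ``Gamma Conjecture I $+$ Property $\O$ $+$ $\lim_{n\to\infty}\sqrt[rn]{(rn)!\,|G_{rn}|}=T$ $\Longrightarrow$ Conjecture I$'$,'' and it then \emph{proves} I$'$ in special cases (Grassmannians, Theorem~\ref{thm:Grass_I_dash}). You correctly identify both the conjectural status of the statement and the reduction the paper proposes, so the high-level structure of your analysis matches the paper's.

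Where you diverge is in the proposed mechanism for upgrading the $\limsup$ in \eqref{eq:Cauchy-Hadamard} to a genuine $\lim$. You invoke Borel--Laplace analysis of $\hG_F(\kappa)$, a sectorial analytic continuation past $\kappa=1/T$, and Flajolet--Odlyzko singularity transfer. The paper instead uses the elementary Lemma~\ref{lem:limsup_lim}: if $F$ admits a weak Landau--Ginzburg model with non-negative coefficients and the constant terms $\Const(f^{rn})$ are eventually nonzero, then the supermultiplicativity $\Const(f^{r(n+m)}) \ge \Const(f^{rn})\Const(f^{rm})$ together with a Fekete-type subadditivity argument already forces $\lim_{n}\sqrt[rn]{(rn)!\,G_{rn}}$ to exist --- no information about the nature of the Borel singularity at $1/T$ is needed. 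This is a meaningful difference: your route would require showing the singularity is of algebraic--logarithmic type and that $\hG_F$ extends to a $\Delta$-domain, neither of which is known in general, whereas the paper's route replaces that analytic hypothesis with a purely combinatorial positivity hypothesis on the mirror (checked for Grassmannians via the Marsh--Rietsch mirror in Proposition~\ref{prop:G_qperiod}). Your concluding diagnosis --- that the general case must remain open pending further mirror-symmetric input --- is exactly right, but the ``crux'' is better located at the existence of a positive weak LG model (or an alternative argument for $\limsup=\lim$) than at controlling the Borel singularity type.

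One small correction: you say Proposition~\ref{prop:J_asymp} alone ``furnishes the continuous limit'' $\langle\alpha,J_F(t)\rangle/\langle[\pt],J_F(t)\rangle\to\langle\alpha,\Gg_F\rangle$. This requires Gamma Conjecture I (so that $A_F=\Gg_F$), as you correctly assume in step (i), but the logical dependence should be stated explicitly; the continuous limit is the content of Corollary~\ref{cor:limitGamma}, not Proposition~\ref{prop:J_asymp} by itself.
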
 
\begin{remark} 
By the above discussion, Gamma conjecture I' holds if Gamma conjecture I 
holds and one has $\lim_{n\to \infty} \sqrt[rn]{(rn)! |G_{rn}|} = T$. 
\end{remark} 

The discrete limit on the left-hand side of \eqref{eq:Apery_limit} 
is called the \emph{Ap\'ery constant} (or Ap\'ery limit) 
and was studied by Almkvist--van-Straten--Zudilin \cite{AvSZ} 
in the context of Calabi--Yau differential equations 
and by Golyshev \cite{Golyshev08a} and Galkin \cite{Galkin:Apery} for Fano manifolds. 
Under Gamma conjecture I', these limits are expressed in terms of 
the zeta values 
$\zeta(2), \zeta(3), \zeta(4),\dots$. 
For some Fano manifolds, they are precisely the limits  
which Ap\'ery used to prove the irrationality 
of $\zeta(2)$ and $\zeta(3)$: 
an Ap\'ery limit for the Grassmannian $G(2,5)$ gives a fast approximation 
of $\zeta(2)$ and an Ap\'ery limit for the orthogonal Grassmannian 
$OG(5,10)$ gives a fast approximation of $\zeta(3)$ 
\cite{Galkin:Apery,Golyshev08a}. 
Most of the Ap\'ery limits of Fano manifolds are 
not fast enough to prove irrationality (see \cite{Galkin:Apery}). 
It would be very 
interesting to find a Fano manifold which gives a fast approximation 
of $\zeta(5)$, for example. 

We give a sufficient condition that ensures that the limit sup in 
\eqref{eq:Cauchy-Hadamard} can be replaced with the limit. 
When a Laurent polynomial 
$f(x) \in \C[x_1^\pm,\dots,x_m^\pm]$ is mirror to $F$, 
we expect that the quantum period $G_F(t)$ \eqref{eq:qperiod} 
for $F$ should be given by the \emph{constant term series} of $f$: 
\[
G_F(t) = \frac{1}{(2\pi\iu)^m}\int_{(S^1)^m} e^{tf(x)} 
\frac{dx_1\cdots dx_m}{x_1\cdots x_m} 
= \sum_{n=0}^\infty \frac{1}{n!} \Const(f^n) t^n 
\]
where $\Const(f^n)$ denotes the constant term 
of the Laurent polynomial $f(x)^n$. 
When this holds, $f(x)$ is said to be a \emph{weak Landau--Ginzburg 
model} of $F$ \cite{Przyjalkowski:weak}. 

\begin{lemma} 
\label{lem:limsup_lim}
Let $F$ be a Fano manifold of index $r$. 
Suppose that $F$ admits a weak Landau--Ginzburg model 
$f(x)\in \C[x_1^\pm,\dots,x_m^\pm]$ whose coefficients 
are non-negative real numbers. 
Suppose also that $\Const(f^{rn}) \neq 0$ 
for all but finitely many $n\in \Z_{\ge 0}$. Then 
the coefficients $G_n$ of the quantum period \eqref{eq:qperiod} 
are non-negative and the limit 
\[
\lim_{n\to \infty} \sqrt[rn]{(rn)! |G_{rn}|} = 
\lim_{n\to \infty} \sqrt[rn]{\Const(f^{rn})}
\]
exists. 
\end{lemma}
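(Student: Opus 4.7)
The first step is to exploit the non-negativity hypothesis on $f$. Since the coefficients of $f$ are non-negative reals, so are those of $f^n$ for every $n\ge 0$; in particular $\Const(f^n)\ge 0$. By the weak Landau--Ginzburg identity in \eqref{eq:qperiod} we have $G_n = \Const(f^n)/n!$, which immediately yields the non-negativity of the quantum period coefficients and the identification
\[
(rn)!\,|G_{rn}| = \Const(f^{rn}).
\]
Thus the two sequences under the radicals in the statement are literally equal, and it suffices to prove that the common limit $\lim_{n\to\infty} \sqrt[rn]{\Const(f^{rn})}$ exists in $\R_{\ge 0}$.

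The core of the argument is a Fekete-type supermultiplicativity. Writing $f^{a+b}=f^a\cdot f^b$ and expanding in monomials gives
\[
\Const(f^{a+b}) = \sum_{\alpha}\bigl([x^\alpha]f^a\bigr)\bigl([x^{-\alpha}]f^b\bigr) \;\ge\; \bigl([x^0]f^a\bigr)\bigl([x^0]f^b\bigr) = \Const(f^a)\Const(f^b),
\]
where the inequality is because every term in the sum is non-negative, so we may drop all summands except $\alpha=0$. Setting $c_n:=\log \Const(f^{rn})$, defined for all sufficiently large $n$ by the non-vanishing hypothesis, this reads $c_{m+n}\ge c_m+c_n$. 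An upper bound is immediate: evaluating $f$ at the all-ones point, which is legitimate because the coefficients are real, gives $\Const(f^n)\le f(1,\dots,1)^n$, and hence $c_n/n\le r\log f(1,\dots,1)<\infty$.

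Fekete's lemma, in its eventually-super-additive form (for instance by declaring $c_n:=-\infty$ at the finitely many exceptional indices so that the super-additivity holds everywhere), then guarantees that $c_n/n$ converges to $\sup_n c_n/n$, a finite real number. Therefore
\[
\lim_{n\to\infty}\sqrt[rn]{\Const(f^{rn})}=\exp\!\left(\frac{1}{r}\lim_{n\to\infty}\frac{c_n}{n}\right)
\]
exists, completing the proof. The non-negativity of the coefficients of $f$ is used crucially at two places: to identify $(rn)!|G_{rn}|$ with $\Const(f^{rn})$ without cancellation, and to extract the diagonal term $\alpha=0$ from the convolution sum. There is no real obstacle here; the only subtlety worth flagging is the passage from eventual to everywhere super-additivity, which is standard and could alternatively be handled by a base-point shift $n\mapsto n+n_0$ once an $n_0$ with $\Const(f^{rn})>0$ for all $n\ge n_0$ has been fixed.
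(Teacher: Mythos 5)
Your proof is correct and follows essentially the same route as the paper's: both arguments hinge on the supermultiplicativity $\Const(f^{r(m+n)})\ge\Const(f^{rm})\Const(f^{rn})$ (a consequence of non-negativity of the coefficients) and then conclude by a Fekete-type argument, which the paper writes out by hand and you invoke as Fekete's lemma in its eventually-super-additive form. The additional upper bound $\Const(f^n)\le f(1,\dots,1)^n$ is a pleasant extra that makes finiteness self-contained, but it does not change the underlying strategy.
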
 
\begin{proof} 
Some of the techniques here are borrowed from 
\cite{Galkin:split}. 
We set $\alpha_n = \log(\sqrt[rn]{\Const(f^{rn})})$. 
It suffices to show that $\lim_{n\to\infty} \alpha_n$ exists. 
By assumption, there exists $n_0\in \Z_{\ge 0}$ such that 
$\alpha_n$ is well-defined for all $n\ge n_0$. 
Since $\Const(f^{r(n+m)}) \ge \Const(f^{rn}) \Const(f^{rm})$, we have 
\[
\alpha_{n+m} \ge \frac{n}{n+m} \alpha_n + \frac{m}{n+m} \alpha_m 
\]
Set $\alpha := \limsup_{n\to \infty} \alpha_n$. 
For any $\epsilon>0$, there exists $n_1\ge 1$ such that 
$\alpha_{n_1}\ge \alpha - \epsilon$.  
Then we have, for all $k\ge 1$ and $0\le i<n_1$, 
\begin{align*} 
\alpha_{kn_1 + n_0+i} & \ge \frac{n_0+i}{kn_1 + n_0 + i} \alpha_{n_0+i}  
+ \frac{kn_1}{kn_1+n_0+i} \alpha_{n_1}
\end{align*} 
The right-hand side converges to $\alpha_{n_1}$ as $k\to \infty$. 
This implies that there exists $n_2 \ge n_0$ such that for all $n\ge n_2$, 
we have 
\[
\alpha_n \ge \alpha - 2\epsilon. 
\]
Since $\epsilon>0$ was arbitrary, this implies 
$\liminf_{n\to \infty} \alpha_n \ge \alpha$ and the conclusion follows. 
\end{proof}

\begin{remark} 
\label{rem:part1_propertyO}
When discussing the continuous limit, we only need to assume 
Part (1) of Property $\O$ (Definition \ref{def:propertyO}). 
More precisely, Propositions \ref{prop:dimAA_is_one}, 
\ref{prop:J_asymp} and Corollary \ref{cor:limitGamma} 
hold for Fano manifolds satisfying only Part (1) of Property $\O$, 
and Gamma conjecture I (Conjecture \ref{conj:GammaI}) 
makes sense for such Fano manifolds. 
On the other hand, 
we need Part (2) of Property $\O$ in the proof of Proposition 
\ref{prop:Apery}. 
\end{remark} 

\begin{remark}
Golyshev--Zagier \cite{Golyshev-Zagier} 
proved Gamma conjecture I for Fano threefolds of Picard rank one 
(there are 17 families of such).  
More precisely, they showed that the limit formula in Corollary \ref{cor:limitGamma} 
holds for those varieties. 
In this case, it is straightforward to check Property $\O$
using Golyshev's multiplication table \cite{Golyshev:modularity}, 
\cite[\S 5.6]{Golyshev:classification} for $(c_1(F)\star_0)$. 
For rank one Fano threefolds, 
the characteristic polynomial $\det(1- tc_1(F)\star_0)$ 
equals the symbol of the regularized quantum differential equation. 
Hence one can easily check Property $\O$ 
also from the list \cite{Fanosearch:3D_Minkowski}. 
\end{remark} 

\section{Gamma conjecture II} 
\label{sec:GII} 
The small quantum cohomology $(H^\udot(F),\star_0)$ of a Fano 
manifold $F$ is said to be semisimple if it is isomorphic to the 
direct sum of $\C$ as a ring. This is equivalent to the condition 
that $(H^\udot(F),\star_0)$ has no nilpotent elements. 
In this section, we give a refinement of Gamma conjecture I 
for a Fano manifold with semisimple quantum cohomology. 

\subsection{Formal fundamental solution}
Suppose that $(H^\udot(F),\star_0)$ is semisimple. Then 
we have an idempotent basis $\psi_1,\dots,\psi_N$ of $H^\udot(F)$ 
such that $\psi_i \star_0 \psi_j = \delta_{ij} \psi_i$. 
Define the normalized idempotent basis $\Psi_1,\dots,\Psi_N$ 
to be $\Psi_i = \psi_i/\sqrt{(\psi_i,\psi_i)}$, where $(\psi_i,\psi_i)$ 
denotes the Poincar\'e pairing of $\psi_i$ with itself. 
Note that $(\Psi_1,\dots,\Psi_N)$ 
is unique up to sign and ordering. We set 
\[
\Psi = \begin{pmatrix} 
\vert & \vert& & \vert \\ 
\Psi_1 & \Psi_2 & \cdots & \Psi_N \\ 
\vert & \vert & & \vert 
\end{pmatrix}. 
\]
This is a matrix with column vectors $\Psi_i$. We may regard it 
as a linear map $\C^N \to H^\udot(F)$. 
Let $u_1,\dots,u_N$ be the eigenvalues of $(c_1(F) \star_0)$ 
such that $c_1(F) \star_0 \Psi_i = u_i \Psi_i$.  
Let $U$ be the diagonal matrix with entries $u_1,\dots,u_N$: 
\[
U = \begin{pmatrix} 
u_1 &  & &  \\ 
& u_2 & & \\ 
& & \ddots & \\ 
&&& u_N
\end{pmatrix}
\]
The following proposition is well-known in the 
context of Frobenius manifolds. 
\begin{proposition}[{\cite[Lectures 4,5]{Dubrovin98a}, 
\cite[Theorem 8.15]{Tel10}}]
\label{prop:formal_solution} 
Suppose that the small quantum cohomology $(H^\udot(F),\star_0)$ 
is semisimple. 
The quantum connection \eqref{eq:qconn} 
near the irregular singular point $z=0$ 
admits a formal fundamental matrix solution of the form: 
\[
\Psi R(z) e^{-U/z} 
\] 
where $R(z) = \id + R_1 z + R_2 z^2 + \cdots \in \End(\C^N) [\![z]\!]$ 
is a matrix-valued formal power series. The formal solution 
$\Psi R(z) e^{-U/z}$ is 
unique up to multiplication by a signed permutation matrix 
from the right (which corresponds to the ambiguity of 
$\Psi_1,\dots,\Psi_N$). 
\end{proposition}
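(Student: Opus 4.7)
The plan is to produce $R(z)$ by an explicit recursive construction of its Taylor coefficients, relying on semisimplicity of $(H^\udot(F), \star_0)$ to invert the commutator $[U, \cdot]$ on off-diagonal matrices and on the antisymmetry of $\mu$ with respect to the Poincar\'e pairing to make the construction consistent on the diagonal.

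First I would substitute the ansatz $V(z) = \Psi R(z) e^{-U/z}$ into the flatness equation $\nabla_{z\partial_z} V = 0$. Using the diagonalization $(c_1(F)\star_0)\Psi = \Psi U$ and conjugating by $\Psi^{-1}$ on the left and $e^{U/z}$ on the right, the equation for $R(z)$ reduces to
$$z^2 R'(z) + z M R(z) - [U, R(z)] = 0, \qquad M := \Psi^{-1} \mu\, \Psi.$$
Expanding $R(z) = I + \sum_{k \ge 1} R_k z^k$ and setting $R_0 := I$, matching the coefficient of $z^{k+1}$ yields the recursion
$$[U,\, R_{k+1}] = (k I + M)\, R_k, \qquad k \ge 0.$$

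Next I would solve this recursion by separating diagonal and off-diagonal parts. Because $[U, R_{k+1}]_{ij} = (u_i - u_j)(R_{k+1})_{ij}$ and the eigenvalues $u_i$ are pairwise distinct by semisimplicity, the off-diagonal entries of $R_{k+1}$ are uniquely recovered from $R_k$. On the other hand, the diagonal of $[U, R_{k+1}]$ vanishes automatically, so for solvability at level $k$ one must have $((kI + M) R_k)_{ii} = 0$ for all $i$; this compatibility is what pins down the (otherwise free) diagonal entries of $R_k$. The induction therefore alternates: given $R_0, \ldots, R_k$ fully known, solve the equation at level $k$ for the off-diagonal part of $R_{k+1}$, then use the compatibility at level $k+1$ to determine the diagonal part of $R_{k+1}$.

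The step I expect to be the main obstacle is verifying that this diagonal compatibility can always be solved. The key input is the antisymmetry of $M$, which I would establish as follows: the operator $\mu$ is skew with respect to the Poincar\'e pairing, since it acts on $H^{2p}(F)$ by the scalar $p - \tfrac{\dim F}{2}$ while the pairing vanishes outside complementary degrees; the normalized idempotents $\Psi_1, \ldots, \Psi_N$ are orthonormal for the Poincar\'e pairing by the Frobenius identity $(\psi_i, \psi_j) = (1, \psi_i \star_0 \psi_j) = \delta_{ij}(1, \psi_i)$. Hence $M^T = -M$, and in particular $M_{ii} = 0$, so the $k = 0$ compatibility $(M R_0)_{ii} = M_{ii} = 0$ is automatic, while for $k \ge 1$ it reduces to $k (R_k)_{ii} = -\sum_{j \ne i} M_{ij} (R_k)_{ji}$, which uniquely determines $(R_k)_{ii}$ from the previously computed off-diagonal entries. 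For uniqueness: the recursion produces $R(z)$ uniquely once the ordered basis $(\Psi_1, \ldots, \Psi_N)$ is fixed. Any other admissible normalized idempotent basis is obtained by a signed permutation $P$; substituting $\Psi \to \Psi P$ replaces $U$ by $P^{-1} U P$, $M$ by $P^{-1} M P$, and hence $R$ by $P^{-1} R P$, so that the full fundamental solution $\Psi R(z) e^{-U/z}$ is multiplied on the right by $P$, matching the stated ambiguity.
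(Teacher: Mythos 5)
Your overall strategy (substitute the ansatz, reduce to the recursion $[U,R_{k+1}]=(kI+M)R_k$, solve entrywise using antisymmetry of $M$) is sound and the reduction itself is correct, but there is a genuine error in the step where you solve for the off-diagonal part: you claim ``the eigenvalues $u_i$ are pairwise distinct by semisimplicity.'' This is false. Semisimplicity of $(H^\udot(F),\star_0)$ means the ring is isomorphic to $\C^N$, i.e.\ the idempotents $\psi_1,\dots,\psi_N$ exist; it does \emph{not} force the eigenvalues $u_i$ of $(c_1(F)\star_0)$ to be distinct. (Concretely, $\P^1\times\P^1$ has semisimple small quantum cohomology but $c_1\star_0$ has a repeated eigenvalue $0$.) The paper even makes this explicit in the remark immediately following the proposition: ``We do not need to assume that $u_1,\dots,u_N$ are mutually distinct.''

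The consequence is that $\ker(\mathrm{ad}\,U)$ is not merely the diagonal but the \emph{block}-diagonal subspace (blocks indexed by equal $u_i$), so the recursion only determines the off-block-diagonal part of $R_{k+1}$, and the solvability constraint at each level becomes that the block-diagonal projection $D\bigl((kI+M)R_k\bigr)$ must vanish, which is a system of equations involving the block-diagonal entries of $R_k$ through the off-diagonal (within a block) entries of $M$. Your argument for the diagonal then needs the operator $(k+1)\,\id + D\circ M\circ D$ (acting on block-diagonal matrices) to be invertible, not just $(k+1)+M_{ii}$. This can in fact be salvaged: $M$ is antisymmetric with respect to the trace pairing (as you prove), $D$ is a symmetric projection, so $D\circ M\circ D$ is again antisymmetric on the block-diagonal subspace and therefore has purely imaginary spectrum; hence $-(k+1)$ is never an eigenvalue and the compatibility can always be solved uniquely. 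But as written, your proof only treats the case of distinct eigenvalues and is incomplete for the generality the proposition asserts.
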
 

\begin{remark} 
Since $R(z)$ is a formal power series in positive powers of $z$, 
the product $R(z) e^{-U/z}$ does not make sense as a power series. 
The meaning of the proposition is 
that the formal gauge transformation by $\Psi R(z)$ turns 
$\nabla_{z\partial_z}$ into $z \partial_z - U/z$. 
\end{remark} 
\begin{remark} 
We do not need to assume that $u_1,\dots,u_N$ are mutually 
distinct. 
\end{remark} 

\subsection{Lift to an analytic solution} 
\label{subsec:lift}
By choosing an angular sector, the above formal solution can be 
lifted to an actual analytic solution. This is an instance of the  
\emph{Hukuhara--Turrittin theorem} for irregular connections 
(see e.g.~\cite[Theorem 19.1]{Wasow}, 
\cite[II, 5.d]{Sabbah:Frobenius_manifold}). 
We say that a phase $\phi \in \R$ (or $e^{\iu\phi} \in S^1$) 
is \emph{admissible} for a multiset $\{u_1,u_2,\dots,u_N\}\subset \C$ 
if $\im(u_i e^{-\iu\phi}) \neq \im(u_j e^{-\iu\phi})$ for every 
pair $(u_i,u_j)$ with $u_i \neq u_j$, i.e.~$e^{\iu\phi}$ is not parallel 
to any non-zero difference $u_i - u_j$. 
\begin{proposition}[{\cite[Theorem 12.2]{Wasow}, 
\cite[Theorem A]{BJL79}, \cite[Lectures 4,5]{Dubrovin98a}, 
\cite[\S 8]{BTL}, \cite[Proposition 2.5.1]{GGI:gammagrass}}] 
\label{prop:analytic_solution} 
Let $\phi\in \R$ be an admissible phase for the spectrum 
$\{u_1,u_2,\dots,u_N\}$ of $(c_1(F)\star_0)$. 
There exist $\epsilon>0$ and an analytic fundamental solution $Y_\phi(z) 
= ( y_1^\phi(z),\dots, y_N^\phi(z))$ for the quantum connection 
\eqref{eq:qconn} on the angular 
sector $|\arg(z) - \phi| <\frac{\pi}{2} + \epsilon$ around $z=0$ such that one 
has the asymptotic expansion 
\begin{equation}
\label{eq:asymptotic_exp}
Y_\phi(z) e^{U/z} \sim \Psi R(z) 
\end{equation} 
as $z\to 0$ in the sector $|\arg(z) -\phi|<\frac{\pi}{2} + \epsilon$, 
where $\Psi R(z) e^{-U/z}$ is the formal fundamental solution 
in Proposition \ref{prop:formal_solution}. 
Such an analytic solution $Y_\phi(z)$ is unique when we 
fix the sign and the ordering of $\Psi_1,\dots,\Psi_N$. 
\end{proposition}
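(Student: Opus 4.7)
The plan is to carry out the classical Hukuhara--Turrittin argument: reduce the problem to producing an analytic correction to a polynomial truncation of $R(z)$ by means of a sectorial Picard iteration, whose convergence is enabled by the admissibility of the phase $\phi$.

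Fix a large integer $N$ and let $R_N(z) = \id + R_1 z + \cdots + R_N z^N$ denote the degree-$N$ truncation of the formal series $R(z)$ from Proposition \ref{prop:formal_solution}. Write the desired analytic solution in the form
\[
Y_\phi(z) = \Psi R_N(z)(\id + H(z)) e^{-U/z},
\]
so that $H(z)$ should be a matrix-valued holomorphic function on the sector $S_\phi = \{z : |\arg z - \phi| < \tfrac{\pi}{2} + \epsilon\}$ vanishing to order $N$ at the origin. Substituting into $\nabla(Y_\phi) = 0$ and using the formal identity satisfied by $\Psi R(z) e^{-U/z}$, one obtains a linear ODE for $H$ of the shape
\[
z^2 H'(z) - [U, H(z)] = E(z, H(z)),
\]
where the right-hand side is linear in $H$ with coefficients holomorphic near $z=0$, and $E(z, 0) = O(z^{N+1})$.

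Next, invert the operator $z^2 \partial_z - (u_i - u_j)$ entry by entry via the integrating factor $e^{(u_i - u_j)/z}$, thereby converting the ODE into a Volterra integral equation. The key point is the choice of integration paths: admissibility of $\phi$ says that $u_i - u_j$ is not parallel to $e^{\iu\phi}$ for any pair $(i,j)$ with $u_i \neq u_j$, so for $\epsilon > 0$ sufficiently small one may select, for each such pair, a path from $0$ to $z$ lying in $S_\phi$ along which $\re((u_j - u_i)/\zeta)$ is nonpositive, keeping the integrating factor bounded; for pairs with $u_i = u_j$ any path in $S_\phi$ suffices. A standard contraction-mapping argument on the Banach space of holomorphic $H$ on $S_\phi$ with norm $\sup_{z \in S_\phi} |z|^{-N}\|H(z)\|$ then yields a unique $H$ with $H(z) = O(z^N)$ on $S_\phi$. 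Since $N$ is arbitrary, this produces an analytic solution $Y_\phi$ for which the full asymptotic expansion \eqref{eq:asymptotic_exp} holds to all orders.

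Finally, uniqueness of $Y_\phi$ given the ordering and signs of $\Psi_1,\ldots,\Psi_N$ follows by a Watson-type rigidity argument: two such lifts would differ by a flat section whose columns admit the zero asymptotic expansion on a sector of opening strictly greater than $\pi$, and the block-exponential structure of any flat section of the quantum connection forces such a section to vanish identically. The main obstacle is the uniform choice of integration paths together with the verification of the contraction estimates on a sector of opening greater than $\pi$; admissibility is precisely the condition that permits these choices to be made simultaneously for all pairs $(i,j)$, after which the argument reduces to the standard Hukuhara--Turrittin theory cited in the statement of the proposition.
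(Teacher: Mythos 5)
The paper does not prove this proposition but cites it to the literature (Wasow, Balser--Jurkat--Lutz, Dubrovin, Bridgeland--Toledano Laredo, and \cite[Prop.~2.5.1]{GGI:gammagrass}), so there is no in-paper argument to compare against; your sketch follows the standard Hukuhara--Turrittin route (polynomial truncation of $R$, Volterra integral equation, contraction, Watson-type uniqueness on a sector of opening $>\pi$), and the overall architecture is sound.

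The concrete gap is in the path-selection step. After inverting $z^2\partial_z - (u_i-u_j)$ entry-wise, the kernel one obtains is $e^{(u_i-u_j)(1/\zeta - 1/z)}$, so the relevant boundedness condition along the integration path is $\re\bigl((u_i-u_j)(1/\zeta - 1/z)\bigr) \le 0$, which couples $\zeta$ to $z$; the $z$-independent condition $\re((u_j - u_i)/\zeta)\le 0$ you impose is not the one you need, and in fact cannot be arranged: for a pair with $u_i \ne u_j$, the set of directions $\arg\zeta$ on which $\re((u_j-u_i)/\zeta)$ has a fixed sign is an open half-circle, while $S_\phi$ has opening $\pi + 2\epsilon > \pi$, so some $z \in S_\phi$ cannot be joined to $0$ through that half-sector. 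The correct argument either integrates certain entries from the outer boundary of $S_\phi$ rather than from $0$, or, if one insists on basing at $0$, uses paths that curve around the disk $\{\zeta : \re((u_i-u_j)/\zeta) > \re((u_i-u_j)/z)\}$ and approaches the origin tangent to the separating circle; in either form admissibility is what guarantees that these choices can be made coherently for all pairs simultaneously and that the resulting operator contracts uniformly over $S_\phi$ for small $\epsilon$. Your truncation step, the resulting equation $z^2 H' - [U,H] = E(z,H)$ with $E(z,0) = O(z^{N+1})$, and the uniqueness argument (which uses that on a sector of opening $>\pi$ every nonzero off-diagonal entry has a recessive subsector) are all correct as stated.
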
 

\begin{remark} 
Notice that each flat section $y_i^\phi(z)$ has the exponential 
asymptotics $\sim e^{-u_i/z} \Psi_i$ as $z\to 0$ in the 
sector $|\arg (z) -\phi|<\frac{\pi}{2} + \epsilon$. 
\end{remark} 

\begin{remark} 
The precise meaning of the asymptotic expansion \eqref{eq:asymptotic_exp} 
is as follows. 
For any $0<\epsilon' < \epsilon$ and $n\in \Z_{\ge 0}$, there exists 
a constant $C= C(\epsilon',n)$ such that 
\[
\left\|Y_\phi(z) e^{-U/z} - \sum_{k=0}^n \Psi R_k z^k \right\| 
\le C |z|^{n+1} 
\] 
for all $z$ with $|\arg z - \phi| \le \frac{\pi}{2} + \epsilon'$ 
and $|z| \le 1$, where we write $R(z) = \sum_{k=0}^\infty R_k z^k$. 
\end{remark} 

\subsection{Asymptotic basis and Gamma conjecture II} 
\label{subsec:asymp_basis_GII}
Let $Y_\phi(z)= (y_1^\phi(z),\dots,y_N^\phi(z))$ be the analytic 
fundamental solution in Proposition 
\ref{prop:analytic_solution}. 
We regard $Y_\phi(z)$ as a function defined on the universal 
cover of $\C^\times$; initially it is defined on the angular 
sector $|\arg (z) - \phi|<\pi + \epsilon$ with $|z|\ll 1$, 
but can be analytically continued to the whole universal cover 
since it is a solution to a linear differential equation. 
For an admissible phase $\phi$ for $\{u_1,\dots,u_N\}$, 
we define the higher asymptotic classes $A_{F,i}^{\phi}
\in H^\udot(F)$, $i=1,\dots,N$ 
by 
\[
y_i^\phi(z) \Bigr|_{\substack{\text{parallel translate} \\ 
\text{to $\arg(z)=0$}}} 
= \frac{1}{(2\pi)^{\dim F/2}} S(z) z^{-\mu} z^{c_1(F)} A_{F,i}^\phi 
\] 
where $S(z) z^{-\mu} z^{c_1(F)}$ is the fundamental solution 
for the quantum connection in Proposition \ref{prop:fundsol}. 
We call $\{A_{F,1}^\phi,A_{F,2}^\phi,\dots,A_{F,N}^\phi\}$ 
the \emph{asymptotic basis} at the phase $\phi$. 
The asymptotic basis is the same as what Dubrovin \cite{Dubrovin98a} 
called the \emph{central connection matrix}. 

\begin{remark} 
Suppose that $F$ satisfies Property $\O$. 
When we take an admissible phase $\phi$ from 
the interval $(-\frac{\pi}{2},\frac{\pi}{2})$, 
the class $A_{F,i}^\phi$ corresponding 
to the eigenvalue $u_i = T$ is proportional to 
the principal asymptotic class $A_F$. 
\end{remark} 

\begin{remark} 
The asymptotic basis at a phase $\phi$ is unique up to 
sign and ordering. 
The sign and the ordering depend on those 
of the normalized idempotents $\Psi_1,\dots,\Psi_N$. 
Each asymptotic class $A_{F,i}^\phi$ is marked by 
the eigenvalue $u_i$ of $(c_1(F)\star_0)$. 
With respect to the pairing $[\cdot,\cdot)$ in \eqref{eq:pairing_[)}, 
these data $\{(A_{F,i}^\phi, u_i)\}_{i=1}^N$ 
form a \emph{marked reflection system} 
\cite{GGI:gammagrass}, see also Remark \ref{rem:mutation}.  
\end{remark} 

\begin{conjecture}[Gamma conjecture II \cite{GGI:gammagrass}] 
\label{conj:GammaII} 
Suppose that a Fano manifold $F$ has a semisimple small quantum 
cohomology and that $D^b_{\rm coh}(F)$ has a full exceptional 
collection. Let $\phi$ be an admissible phase for the 
spectrum $\{u_1,\dots,u_N\}$ of $(c_1(F)\star_0)$. 
We number the eigenvalues $u_1,\dots,u_N$ 
so that $\im(e^{-\iu \phi} u_1) 
\ge \im(e^{-\iu\phi} u_2) \ge \cdots \ge \im(e^{-\iu\phi}u_N)$. 
There exists a full exceptional collection $E_1^\phi,\dots,E_N^\phi$ 
such that $A_{F,i}^\phi = \Gg_F \cdot \Ch(E_i^\phi)$. 
\end{conjecture}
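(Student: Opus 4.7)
The plan is to approach Gamma conjecture II via mirror symmetry, in the same spirit as the introduction's passage from oscillatory integrals to central charges. The strategy is to realize each higher asymptotic class $A_{F,i}^\phi$ as the analytic continuation of an oscillatory integral over a Lefschetz thimble of a Landau--Ginzburg mirror $f \colon Y \to \C$, and then to match these thimbles with exceptional objects in $D^b_{\rm coh}(F)$ via homological mirror symmetry.

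First, I would fix a Landau--Ginzburg mirror $f \colon Y \to \C$ of $F$ (a weak LG model in general, or the Batyrev--Borisov/Givental/Hori--Vafa mirror in the toric and Grassmannian settings of \S\ref{sec:toric}--\S\ref{sec:Grassmannian}) whose set of critical values coincides with the spectrum $\{u_1,\dots,u_N\}$ of $(c_1(F)\star_0)$. For the admissible phase $\phi$, construct Lefschetz thimbles $\Gamma_1^\phi,\dots,\Gamma_N^\phi$ descending from the $N$ critical points along the direction $e^{\iu\phi}$; admissibility ensures that they are well-ordered by height $\im(e^{-\iu\phi}u_i)$. The oscillatory integral
\[
I_i^\phi(z) \;=\; \frac{1}{(2\pi z)^{\dim F/2}} \int_{\Gamma_i^\phi} e^{-f/z}\,\omega
\]
(with $\omega$ a holomorphic volume form on $Y$) is a flat section of the quantum connection, and stationary phase at the critical point yields the exponential asymptotic $\sim e^{-u_i/z}\Psi_i$ as $z\to 0$ in the sector around $\arg z = \phi$. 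The uniqueness clause in Proposition \ref{prop:analytic_solution} then identifies $I_i^\phi(z)$ with the flat section $y_i^\phi(z)$ up to sign, so $A_{F,i}^\phi$ is computed from the analytic continuation of $I_i^\phi$ to $\arg z = 0$.

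Second, I would invoke homological mirror symmetry for the LG model $(Y,f)$: the directed Fukaya--Seidel category of thimbles at phase $\phi$ is equivalent to a full exceptional subcategory $\langle E_1^\phi,\dots,E_N^\phi\rangle$ of $D^b_{\rm coh}(F)$, with the ordering by $\im(e^{-\iu\phi}u_i)$ matching the phase ordering on thimbles. The B-model central charge of $\Gamma_i^\phi$, extracted from the large-radius ($z\to\infty$) expansion of $I_i^\phi(z)$ by the Frobenius method, should then evaluate to $\Gg_F \cdot \Ch(E_i^\phi)$ paired against the fundamental solution $S(z)z^{-\mu}z^{c_1(F)}$. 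This is exactly the mechanism by which the Gamma class appeared in the quintic mirror discussion of the introduction and in Hosono's formula \eqref{eq:central_charge_V}, and by which the $\Gg$-integral structure was identified with the topological $K$-theoretic structure in \cite{Iritani11}. Comparing this with the definition of $A_{F,i}^\phi$ in \S\ref{subsec:asymp_basis_GII} yields the required equality $A_{F,i}^\phi = \Gg_F \cdot \Ch(E_i^\phi)$, while fullness of the exceptional collection follows from the fact that the thimbles span the relative homology of $f$.

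The hardest step will be the middle one: establishing homological mirror symmetry in sufficient precision to pin down \emph{which} exceptional object corresponds to a given thimble. In full generality this is completely open, so the realistic plan is to proceed case by case, as the paper does. For toric Fano manifolds one uses the explicit factorization of the $I$-function through $\Gg_F$ (\S\ref{sec:toric}) to match thimbles with the line bundles of a Beilinson-type exceptional collection; toric complete intersections follow by the quantum Lefschetz compatibility of \S\ref{sec:toric_CI}--\S\ref{sec:Lefschetz}; and for Grassmannians $\Gr(r,n)$, the $r$-th alternate product mirror of \S\ref{sec:Grassmannian} reduces the matching to the toric case, after which the Kapranov exceptional collection is identified with the combinatorics of the critical points of the mirror potential. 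A secondary, but delicate, point is bookkeeping the signs and mutations needed to align the ordering by $\im(e^{-\iu\phi}u_i)$ with the categorical ordering of the exceptional collection as $\phi$ varies.
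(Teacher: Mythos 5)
The statement you are asked to prove is Conjecture~\ref{conj:GammaII}, which is a \emph{conjecture}, not a theorem: the paper does not prove it, and indeed no proof for a general Fano manifold exists. Your proposal therefore cannot be compared against ``the paper's own proof,'' because there is none. What you have written is a heuristic roadmap, and to your credit you flag the fatal gap yourself: the middle step requires homological mirror symmetry for the LG model in a form precise enough to pin down which exceptional object corresponds to which thimble, and, as you say, this is open in general. A strategy whose central ingredient is an unproved conjecture is not a proof, so the proposal does not establish Gamma conjecture II.

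It is worth being precise about what the paper \emph{does} establish along the lines you sketch, since your proposal somewhat overstates it. Theorem~\ref{thm:toric_GammaII} (toric case) does not produce a full exceptional collection in $D^b_{\rm coh}(X)$; it only produces a $\Z$-basis of the topological $K$-group with uni-uppertriangular Euler pairing whose images under $\Gg_X\Ch(\cdot)$ give the asymptotic basis --- the lift to an actual exceptional collection is explicitly left to homological mirror symmetry (see the remark after that theorem). For toric complete intersections the paper only treats Gamma conjecture~I (Theorem~\ref{thm:toriccomp_GammaI}), not~II, and the quantum Lefschetz section~\ref{sec:Lefschetz} likewise addresses only conjecture~I. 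For Grassmannians, this paper proves Gamma conjecture~I$'$ (Theorem~\ref{thm:Grass_I_dash}); Gamma conjecture~II for $\Gr(r,n)$ is credited to the earlier work \cite{GGI:gammagrass} via the quantum Satake principle, and is only touched on in a remark here. So even in the special cases you cite, the route you describe is either not carried out in this paper or yields a strictly weaker statement. The honest conclusion is that your outline is a reasonable description of the \emph{expected} mechanism behind Gamma conjecture~II --- essentially the philosophy of the introduction's table and of \cite{Iritani09,Iritani11} --- but it is a conjecture-level heuristic, not a proof, and you should present it as such.
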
 

Recall that $\Ch(E) = \sum_{p=0}^{\dim F} (2\pi\iu)^p 
\ch_p(E)$ is the modified Chern character. 
We stated Gamma conjecture II under a slightly restrictive assumption: as in the original 
formulation \cite[Conjecture 4.6.1]{GGI:gammagrass}, 
we can state Gamma conjecture II under the assumption that 
the big quantum cohomology is analytic and semisimple at some $\tau\in H^\udot(F)$. 
We restrict to the small quantum cohomology just for exposition. 
Note that there are Fano manifolds (symplectic isotropic Grassmannians $IG(2,2n)$) 
such that the small quantum cohomology is not semisimple, but that 
the big quantum cohomology is generically semisimple \cite{GMS,Perrin,
MPS, CMPS_K}. 

\begin{remark} 
Part (3) of Dubrovin's conjecture 
\cite[Conjecture 4.2.2]{Dubrovin98} says that the columns of 
the central connection matrix are given by $C'( \Ch(E_i))$ 
for some linear operator $C' \in \End(H^\udot(F))$ commuting 
with $c_1(F)\cup$. Gamma conjecture II says that $C ' = \Gg_F \cup$. 
Recently Dubrovin \cite{Dubrovin:Strasbourg} 
also proposed the same conjecture as Gamma conjecture II. 
\end{remark} 

\subsection{Stokes matrix}
Let $Y_\phi(z)$ and $\mY_\phi(z)$ be the analytic fundamental solutions 
from Proposition \ref{prop:analytic_solution} 
associated respectively to admissible directions $e^{\iu\phi}$ 
and $-e^{\iu\phi}$. The domains of definitions of $Y_\phi$ 
and $\mY_\phi$ are shown in Figure \ref{fig:Stokes}. 
Let $\Pi_{\pm}$ be the angular regions as in Figure \ref{fig:Stokes} 
which are components of the intersection of the domains 
of $Y_\phi$ and $\mY_\phi$. 
The \emph{Stokes matrices} are the constant matrices $S^\phi$ and 
$S^\phi_-$ satisfying 
\begin{align*} 
Y_\phi(z) & = \mY_\phi(z) S^\phi && \text{for $z\in \Pi_+$;} \\ 
Y_\phi(z) & = \mY_\phi(z) S^\phi_- && \text{for $z\in \Pi_-$}. 
\end{align*}

\begin{figure}[ht] 
\begin{picture}(300,140) 
\put(150,70){\makebox(0,0){\circle{4}}}

\put(200,70){\makebox(0,0){$Y_\phi(z)$}} 
\put(105,70){\makebox(0,0){$\mY_\phi(z)$}} 
\put(242,68){$\Longrightarrow$} 
\put(250,83){\makebox(0,0){$\phi$}} 
\put(250,55){\makebox(0,0){\tiny admissible}} 
\put(250,45){\makebox(0,0){\tiny direction}} 

\put(150,130){\makebox(0,0){$\Pi_+$}} 
\put(150,10){\makebox(0,0){$\Pi_-$}} 

\thicklines 
\path(132,0)(150,70)(132,140)
\dashline[100]{3}(168,0)(150,70)(168,140)

\end{picture} 
\caption{$\mY_\phi(z)$ is defined on the left side of the dotted line 
and $Y_\phi(z)$ is defined on the right side of the solid line.} 
\label{fig:Stokes} 
\end{figure}
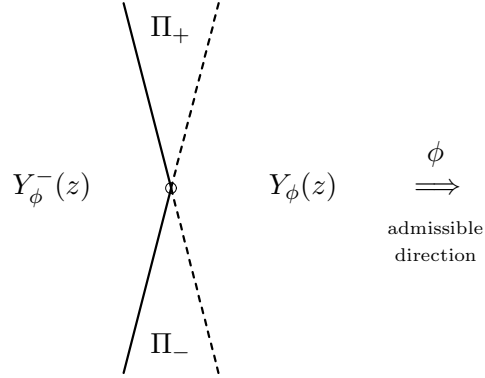
\begin{proposition}[{\cite[Theorem 4.3]{Dubrovin98a}, 
\cite[Proposition 2.6.4]{GGI:gammagrass}}]  
Let $Y_\phi(z) = (y_1^\phi(z),\dots, y_N^\phi(z))$ 
be the fundamental solution from Proposition \ref{prop:analytic_solution}. 
The Stokes matrices at phase $\phi$ 
are given by $S^\phi_{ij} = S^\phi_{-,ji} = (y_i^\phi(e^{-\pi\iu} z), 
y_j^\phi(z))$, where 
$y_i^{\phi}(e^{-\pi\iu} z)$ denotes the analytic continuation 
of $y_i(z)$ along the path $[0,\pi] \ni \theta \mapsto e^{-\iu\theta} z$. 
The flat sections $y_1^\phi,\dots,y_N^\phi$ 
are semi-orthogonal in the following sense: 
\[
S^\phi_{ij} = 
\begin{cases} 
0 & \text{if $(i\neq j$ and $u_i=u_j)$   
or 
$\im(e^{-\iu \phi} u_i) < \im(e^{-\iu \phi} u_j)$}; \\ 
1 & \text{if $i=j$}. 
\end{cases}
\] 
\end{proposition}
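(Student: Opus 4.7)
The plan is to exploit the constancy of the Poincar\'e pairing $p_{ij}(z) := (y_i^\phi(e^{-\pi\iu} z), y_j^\phi(z))$ combined with asymptotic analysis in two different angular regions. First I verify that $p_{ij}$ is independent of $z$: differentiating and substituting the flat equation $z y'(z) = ((c_1(F)\star_0)/z - \mu) y(z)$ into each factor, the $z^{-2}$ contributions cancel by self-adjointness of $(c_1(F)\star_0)$ under the Poincar\'e pairing (a Frobenius property), and the $z^{-1}$ contributions cancel by skew-self-adjointness of $\mu$ (which shifts the degree grading symmetrically about $(\dim F)/2$).

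Next I evaluate the constant $p_{ij}$ in two ways. In $\Pi_+$, where $Y_\phi = \mY_\phi S^\phi$, I substitute $y_j^\phi(z) = \sum_k \my_k^\phi(z) S^\phi_{kj}$ and reduce to $p_{ij} = \sum_k S^\phi_{kj}\,(y_i^\phi(e^{-\pi\iu} z), \my_k^\phi(z))$. The inner pairing is also constant, and I compute it using the formal expansions $y_i^\phi(w) \sim e^{-u_i/w}\sum_m \Psi_m R_{mi}(w)$ and $\my_k^\phi(w) \sim e^{-u_k/w}\sum_n \Psi_n R_{nk}(w)$: the orthonormality $(\Psi_m,\Psi_n)=\delta_{mn}$ combined with the standard Frobenius-manifold identity $R(-z)^T R(z) = \id$ (which reflects flatness of the Poincar\'e pairing in the normalized idempotent basis) gives the asymptotic
\[
(y_i^\phi(e^{-\pi\iu} z), \my_k^\phi(z)) \sim e^{(u_i - u_k)/z}\,\delta_{ik},
\]
so the constant equals $\delta_{ik}$ and hence $p_{ij} = S^\phi_{ij}$. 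Alternatively, for $z$ in the interior of $Y_\phi$'s sector where $e^{-\pi\iu} z$ leaves through $\Pi_-$, I express $y_i^\phi$ via analytic continuation through $\Pi_-$ as $\sum_l \my_l^\phi \cdot S^\phi_{-,li}$ and apply the analogous asymptotic computation to obtain $(\my_l^\phi(e^{-\pi\iu} z), y_j^\phi(z)) = \delta_{lj}$, giving $p_{ij} = S^\phi_{-,ji}$. Since $p_{ij}$ is a single constant on the connected region where it is defined, both evaluations coincide, simultaneously yielding the pairing formula and the symmetry $S^\phi_{ij} = S^\phi_{-,ji}$.

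Finally, for the semi-orthogonality, I match asymptotic expansions on both sides of the Stokes identity $y_j^\phi(z) = \sum_k \my_k^\phi(z) S^\phi_{kj}$ in $\Pi_+$: the LHS has leading asymptotic $e^{-u_j/z}\Psi_j$, and any term on the RHS with $\im(e^{-\iu\phi}u_k) < \im(e^{-\iu\phi}u_j)$ carries a prefactor $e^{-u_k/z}$ whose modulus is exponentially larger than $e^{-u_j/z}$ in $\Pi_+$, forcing $S^\phi_{kj} = 0$; the diagonal $S^\phi_{jj}=1$ is read off from the coefficient of $e^{-u_j/z}\Psi_j$, and the case $u_k=u_j$ with $k\ne j$ is excluded by $(\Psi_k,\Psi_j)=0$. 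The main obstacle throughout is the careful sector analysis---tracking the domain of validity of each asymptotic expansion, controlling the analytic continuation path used to define $y_i^\phi(e^{-\pi\iu}z)$, and confirming that the two evaluations of $p_{ij}$ lie on the same connected component so that their common value forces the identification $S^\phi_{ij} = S^\phi_{-,ji}$.
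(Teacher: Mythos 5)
Your argument follows the standard route (the one in Dubrovin's lectures and in \cite[Proposition 2.6.4]{GGI:gammagrass}, to which the paper delegates the proof): establish constancy of $p_{ij}(z)=(y_i^\phi(e^{-\pi\iu}z),y_j^\phi(z))$ from self-adjointness of $c_1(F)\star_0$ and skew-adjointness of $\mu$; evaluate the constant against the formal expansions via $(\Psi_m,\Psi_n)=\delta_{mn}$ and $R(-z)^TR(z)=\id$; read off semi-orthogonality by comparing exponential growth rates in $\Pi_+$. The structure and the key identities are all right, and you correctly flag the sector bookkeeping as the delicate part.

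One place where the write-up is too quick and would need to be tightened: you assert that the asymptotic $(y_i^\phi(e^{-\pi\iu}z),\my_k^\phi(z))\sim e^{(u_i-u_k)/z}\delta_{ik}$ immediately pins down the constant as $\delta_{ik}$. For $i\ne k$ with $u_i\ne u_k$ the prefactor is identically zero, so the relation reads $C_{ik}e^{-(u_i-u_k)/z}=O(z^\infty)$; this only forces $C_{ik}=0$ on rays where $e^{-(u_i-u_k)/z}$ is unbounded. You therefore need to observe that both $e^{-\pi\iu}z$ and $z$ range over a sector of angular width $\pi+2\epsilon$ on which the asymptotic expansions hold simultaneously, so for every pair $u_i\ne u_k$ there is a direction in that sector with $\Re((u_i-u_k)/z)<0$, and the conclusion follows. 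A similar care is needed to verify that the second evaluation (through $\Pi_-$, giving $S^\phi_{-,ji}$) uses the \emph{same} branch of $\mY_\phi$ as the first, since on the universal cover the two lifts of $\mY_\phi$ adjacent to $\Pi_+$ and $\Pi_-$ differ by a deck transformation; one has to fix the determination so that both Stokes relations $Y_\phi=\mY_\phi S^\phi$ on $\Pi_+$ and $Y_\phi=\mY_\phi S^\phi_-$ on $\Pi_-$ refer to the same $\mY_\phi$. With these two points made explicit the argument is complete.
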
 

Using the non-symmetric pairing $[\cdot,\cdot)$ given in 
\eqref{eq:pairing_[)}, we have 
\begin{align*} 
S^\phi_{ij} & = (y_i^\phi(e^{-\iu\pi} z), y_j^\phi(z) )  \\ 
& = \frac{1}{(2\pi)^{\dim F}} \left(S(-z) z^{-\mu}  
 e^{\pi\iu\mu} z^{c_1(F)}e^{-\pi\iu c_1(F)} A_{F,i}^\phi, 
S(z) z^{-\mu} z^{c_1(F)} A_{F,j}^\phi \right) \\
& = [A_{F,i}^\phi, A_{F,j}^\phi)   
\end{align*} 
where we used the fact that $(S(-z) \alpha, S(z) \beta )  = (\alpha,\beta)$ 
and $(z^{-\mu}\alpha,z^{-\mu}\beta) = (\alpha,\beta)$ 
(see \cite{Dubrovin98a}). 
Therefore, the factorization \eqref{eq:factorize_HRR} of the 
Hirzebruch--Riemann--Roch formula implies the 
following corollary. 

\begin{corollary} 
Suppose that a Fano manifold $F$ satisfies Gamma conjecture II. 
Then there exists a full exceptional collection $E_1,\dots, E_N$ 
of $D^b_{\rm coh}(F)$ such that $\chi(E_i,E_j)$ equals the 
Stokes matrix $S_{ij}$.  (This conclusion is part (2) of 
Dubrovin's conjecture \cite[Conjecture 4.2.2]{Dubrovin98}). 
\end{corollary}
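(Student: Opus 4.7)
The plan is essentially to combine three ingredients that have already been assembled in the excerpt: the formula for the Stokes matrix in terms of the pairing $[\cdot,\cdot)$ applied to the asymptotic basis, the statement of Gamma conjecture II, and the factorization \eqref{eq:factorize_HRR} of the Hirzebruch--Riemann--Roch formula.

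First I would fix an admissible phase $\phi$ for the spectrum $\{u_1,\ldots,u_N\}$ of $(c_1(F)\star_0)$ (such $\phi$ exist since there are only finitely many non-admissible directions in $\R$). By Gamma conjecture II, there is a full exceptional collection $E_1^\phi,\ldots,E_N^\phi$ of $D^b_{\rm coh}(F)$, ordered compatibly with the convention $\im(e^{-\iu\phi} u_1) \ge \cdots \ge \im(e^{-\iu\phi} u_N)$, such that
\[
A_{F,i}^\phi = \Gg_F \cdot \Ch(E_i^\phi), \qquad i=1,\ldots,N.
\]

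Next I would substitute this into the identity $S^\phi_{ij} = [A_{F,i}^\phi, A_{F,j}^\phi)$ established just above the corollary. This yields
\[
S^\phi_{ij} = \bigl[\Gg_F \cdot \Ch(E_i^\phi),\; \Gg_F \cdot \Ch(E_j^\phi)\bigr),
\]
and the factorized Hirzebruch--Riemann--Roch formula \eqref{eq:factorize_HRR} identifies the right-hand side with $\chi(E_i^\phi, E_j^\phi)$. Taking $E_i := E_i^\phi$ gives the desired full exceptional collection.

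There is really no hard step here: the corollary is a purely formal consequence of the preceding material once Gamma conjecture II is granted. The only minor point to remark on is that the ordering of the $E_i$ depends on the chosen admissible phase $\phi$, and the resulting Stokes matrix correspondingly depends on $\phi$; different choices of $\phi$ yield exceptional collections related by the braid group action (mutations), consistent with the marked reflection system structure mentioned in the remarks. No additional estimates, analytic continuations, or geometric input are needed beyond quoting the three ingredients above.
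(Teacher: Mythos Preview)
Your proof is correct and follows exactly the same route as the paper: the corollary is stated immediately after the computation $S^\phi_{ij} = [A_{F,i}^\phi, A_{F,j}^\phi)$, and the paper simply notes that the factorization \eqref{eq:factorize_HRR} of Hirzebruch--Riemann--Roch then yields the result. Your write-up spells out the substitution $A_{F,i}^\phi = \Gg_F \cdot \Ch(E_i^\phi)$ explicitly, which is precisely the intended argument.
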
 

\begin{remark} 
\label{rem:mutation} 
The asymptotic basis can be defined similarly for the big quantum 
product $\star_\tau$ with $\tau \in H^\udot(F)$ as far as 
$\star_\tau$ is semisimple. 
We have an asymptotic basis $A_{F,i}^{\phi,\tau}$ 
depending on both $\phi$ and $\tau$, and Gamma conjecture II 
makes sense at general $\tau\in H^\udot(F)$. 
The truth of the Gamma conjecture II is, however, independent of 
the choice of $(\tau,\phi)$. This is because the asymptotic basis 
changes (discontinuously) by mutation as $(\tau,\phi)$ varies, 
and we can consider the corresponding mutation for exceptional 
collections. The right mutation 
of asymptotic bases takes the form 
\[
(A_1, A_2, \dots, \overset{i}{A_i}, 
\overset{i+1}{A_{i+1}}, \dots A_N) \mapsto 
(A_1,A_2,\dots, \overset{i}{A_{i+1}}, 
\overset{i+1}{A_i -  [A_i, A_{i+1}) A_{i+1}}, 
\cdots, A_N) 
\]
if we order the asymptotic basis so that 
$\im(e^{-\iu \phi} u_1)\ge \im(e^{-\iu \phi} u_2) 
\ge \cdots \ge \im(e^{-\iu\phi} u_N)$. 
The right mutation happens when the eigenvalue $u_{i+1}$ crosses 
the ray $u_i + \R_{\ge 0} e^{\iu\phi}$. 
(The left mutation is the inverse of the right mutation). 
The braid group action on asymptotic bases and Stokes matrices 
has been studied by Dubrovin, see \cite{Dubrovin98a, GGI:gammagrass} 
for more details. 
\end{remark}

\begin{remark} 
In general, the quantum connection of a smooth projective 
variety (or a projective orbifold) $X$  
is underlain by the integral local system consisting of flat sections 
$(2\pi)^{-\frac{\dim F}{2}} 
S(z) z^{-\mu}z^{c_1(F)} \Gg_X \Ch(E)$ with $E\in K^0(X)$.  
This is called the \emph{$\Gg$-integral structure} \cite{Iritani09,KKP08}. 
In this language, Gamma conjecture implies that this integral structure 
should be compatible with the Stokes structure 
at the irregular singular point $z=0$ of the quantum connection. 
Katzarkov-Kontsevich-Pantev \cite{KKP08} imposed the compatibility 
of rational structure with Stokes structure as part of 
conditions for nc-Hodge structure. 
Our Gamma conjecture can be viewed as an adaptation of their 
compatibility condition to the quantum cohomology of Fano manifolds. 
Note that Gamma conjecture II implies the integrality of the 
Stokes matrix $S_{ij}$. 
\end{remark} 

\begin{remark} 
Recently, Sanda--Shamoto \cite{Sanda-Shamoto} proposed a generalization 
of Gamma Conjecture II to the non-semisimple case, and called it 
Dubrovin type conjecture.  
(Among the authors of \cite{GGI:gammagrass}, 
such a conjecture was locally called ``Gamma Conjecture III'', 
although we did not have a precise formulation.) 
\end{remark} 

\section{Mirror heuristics} 
\label{sec:heuristics} 
In this section we give a heuristic argument 
which gives the mirror oscillatory integral 
and the Gamma class 
from the polynomial loop space (quasi map space) 
for $\P^{N-1}$. This is motivated by 
Givental's equivariant Floer theory heuristics 
\cite{Givental:ICM, Givental:homological}. 
The argument in this section can be applied more generally 
to toric varieties to yield their mirrors and Gamma classes. 

\subsection{Polynomial loop space} 
Givental \cite{Givental:ICM,Givental:homological} conjectured that the quantum 
$D$-module (i.e.~quantum connection) should be identified with 
the $S^1$-equivariant Floer theory for the free loop space 
(see also \cite{Vlassopoulos, Iritani:EFC, Arkhipov-Kapranov}). 
Following Givental, we consider an algebraic version 
of the loop space instead 
of the actual free loop space. 
The (Laurent) \emph{polynomial loop space} of $\P^{N-1}$ 
is defined to be 
\[
L_{\rm poly} \P^{N-1} = 
\left( \C[\zeta,\zeta^{-1}]^N \setminus \{0\}\right)/\C^\times.  
\]
where $\C^\times$ acts on $\C[\zeta,\zeta^{-1}]^N$ by 
scalar multiplication. 
The polynomial loop space $L_{\rm poly}\P^{N-1}$ 
can be also described as a symplectic 
reduction of $\C[\zeta,\zeta^{-1}]^N \cong \C^{\oplus \infty}$ 
by the diagonal $S^1$-action. 
For a point $(a_1(\zeta),\dots, a_N(\zeta)) \in \C[\zeta,\zeta^{-1}]^N$, 
we write $a_i(\zeta) = \sum_{n\in \Z} a_{i,n} \zeta^n$ and regard 
$(a_{i,n} : 1\le i\le N, n \in \Z)$ as a co-ordinate system on 
$\C[\zeta,\zeta^{-1}]^N$. 
With respect to the standard K\"ahler form 
\begin{equation} 
\label{eq:Kahlerform_Cinfinity}
\omega = \frac{\iu}{2} \sum_{i=1}^N \sum_{n\in \Z} 
d a_{i,n} \wedge  d\ov{a_{i,n}}
\end{equation}
on $\C[\zeta,\zeta^{-1}]^N$, 
the diagonal $S^1$-action admits a moment map\footnote
{We identify $\Lie(S^1)^*$ with $\R$ so that the radian 
angular form $d\theta$ on $S^1=\{e^{\iu\theta} : \theta \in \R\}$ 
corresponds to $2\pi$. With this choice, the reduced symplectic form 
$\omega_u$ on $\mu^{-1}(u)/S^1$ 
represents an integral cohomology class precisely 
when $u\in\Z$. 
} 
$\mu \colon \C[\zeta,\zeta^{-1}]^N \to \Lie(S^1)^* \cong \R$ 
given by 
\[
\mu(a_1(\zeta),\dots,a_N(\zeta)) = 
\pi \sum_{i=1}^N \sum_{n\in \Z} |a_{i,n}|^2. 
\]
This satisfies $\iota_{X} \omega + d \mu = 0$ for the 
vector field  
$X = 2\pi \iu \sum_{i=1}^N \sum_{n\in \Z} (a_{i,n} \parfrac{}{a_{i,n}} 
- \ov{a_{i,n}}\parfrac{}{\ov{a_{i,n}}})$ generating the 
$S^1$-action. 
Then we have 
\[
L_{\rm poly} \P^{N-1} \cong \mu^{-1}(u) /S^1 
\]
for every $u\in\R_{>0}$. 
Via the symplectic reduction, $L_{\rm poly} \P^{N-1}$ 
is equipped with the reduced symplectic form $\omega_u$ 
such that the pull-back of $\omega_u$ to $\mu^{-1}(u)$ 
equals the restriction $\omega|_{\mu^{-1}(u)}$. 
The class $\omega_u$ represents the cohomology class $u c_1(\O(1))$ 
on $L_{\rm poly} \P^{N-1}$. 

The loop rotation defines the $S^1$-action on 
$L_{\rm poly} \P^{N-1}$ given by 
$[a_1(\zeta),\dots, a_N(\zeta)] 
\mapsto [a_1(\lambda \zeta),\dots, a_N(\lambda \zeta)]$ 
with $\lambda \in S^1$. 
With respect to the reduced symplectic form $\omega_u$ 
on $L_{\rm poly} \P^{N-1}$, 
this $S^1$-action admits a moment map $H_u$ given by 
\[
H_u([a_1(\zeta),\dots,a_N(\zeta)]) = \pi 
\sum_{i=1}^N \sum_{n\in \Z} n |a_{i,n}|^2 
\qquad \text{with $(a_1(\zeta),\dots,a_N(\zeta))\in \mu^{-1}(u)$}.  
\]
The function $H_u$ is an analogue of the \emph{action functional}  
on the free loop space. 

\begin{remark} 
More precisely, the polynomial loop space should be regarded 
as an analogue of the \emph{universal cover} 
of the free loop space $\L \P^{N-1}$. In fact, we have an analogue of 
the deck transformation on $L_{\rm poly} \P^{N-1}$ 
given by $[a_1(\zeta),\dots, a_N(\zeta)] 
\mapsto [\zeta a_1(\zeta),\dots, \zeta a_N(\zeta)]$; 
this corresponds to a generator of $\pi_1(\L \P^{N-1}) 
\cong \pi_2(\P^{N-1}) = \Z$. 
Recall that the action functional is defined on the 
universal cover of $\L \P^{N-1}$. 
\end{remark} 

\subsection{Solution as a path integral}
Recall that 
symplectic Floer theory is an infinite-dimensional analogue of 
the Morse theory with respect to the action functional on 
the loop space. We consider the Morse theory on $L_{\rm poly} \P^{N-1}$ 
with respect to the Bott--Morse function $H_u$. 
Note that the critical set of $H_u$ is a disjoint union of infinite copies of 
$\P^{N-1}$ given by $(\P^{N-1})_n = 
\{ [a_1 \zeta^n,\dots,a_N \zeta^n ] \} \subset L_{\rm poly} \P^{N-1}$ 
for each $n\in \Z$. 
The Floer fundamental cycle $\Delta$ is defined to be the closure 
of the stable manifold associated to the critical component $(\P^{N-1})_0$. 
This is given by 
\[
\Delta = \left\{ [a_1(\zeta),\dots, a_N(\zeta)] \in L_{\rm poly} \P^{N-1} : 
a_i(\zeta) \in \C[\zeta] \right \}. 
\]
Under the isomorphism between the Floer homology and the quantum 
cohomology \cite{PSS}, the Floer fundamental cycle corresponds 
to the identity class $1\in H^\udot(\P^{N-1})$. 
Consider the following equivariant 2-form on $L_{\rm poly} \P^{N-1}$ 
(in the Cartan model) 
\[
\Omega_u = \omega_u - z H_u 
\]
where $z \in H^2_{S^1}(\pt,\Z)$ 
is a positive generator. 
Note that $\Omega$ is equivariantly closed since 
$H_u$ is a Hamiltonian for the $S^1$-action. 
Givental \cite{Givental:ICM,Givental:homological} proposed that 
the infinite-dimensional integral 
(which could be viewed as a Feynman path integral) 
\begin{equation} 
\label{eq:path_integral}
\int_\Delta e^{\Omega_u/z} 
\end{equation} 
should give a solution to the quantum differential equation for 
$\P^{N-1}$. 
This may be viewed as the image of the Floer fundamental cycle $\Delta$ under 
the homomorphism $C\mapsto \int_{C} e^{\Omega_u/z}$. 
The integral does not have a rigorous definition in 
mathematics, but we can heuristically 
compute this quantity in two different ways --- 
one is by a direct computation and the other is by localization. 
The former method yields a mirror oscillatory integral 
and the latter (due to Givental \cite{Givental:ICM,Givental:homological}) 
yields the $J$-function of $\P^{N-1}$. 
In Givental's original calculation, however, 
an infinite (constant) factor corresponding 
to the Gamma class has been ignored. 
From this calculation we obtain a (mirror) integral representation 
of the $\Gg_{\P^{N-1}}$-component of the $J$-function. 

\subsection{Direct calculation} 
\label{subsec:direct}
We compute the infinite dimensional integral \eqref{eq:path_integral} directly. 
We regard $z$ as a positive real parameter. 
Since $\Delta = (\mu^{-1}( u) 
\cap \C[\zeta]^N) / S^1$, we have 
\[
\eqref{eq:path_integral} 
= \int_{(\mu^{-1}(u) \cap \C[\zeta]^N)/S^1} 
e^{-H_u} e^{\omega_u/z} 
= \int_{\mu^{-1}(u) \cap \C[\zeta]^N} e^{-H_u} \frac{d\theta}{2\pi} 
\wedge e^{\omega/z}   
\]
where $d\theta$ is the angular form (connection form) on 
the principal $S^1$-bundle $\mu^{-1}(u) \to \mu^{-1}(u)/S^1$ given 
by $d\theta = \frac{\pi}{2 \mu \iu}\sum_{i=1}^N \sum_{k\in \Z} 
(\ov{a_{i,n}} da_{i,n} - a_{i,n} d\ov{a_{i,n}})$ 
(satisfying $d\theta(X) = 2\pi$) 
and $\omega$ is the K\"ahler form \eqref{eq:Kahlerform_Cinfinity}. 
Changing co-ordinates $a_{i,n} \to \sqrt{z}a_{i,n}$, we find that 
this equals 
\[
\int_{\mu^{-1}(u/z) \cap \C[\zeta]^N} 
e^{-z H_u} \frac{d\theta}{2\pi} \wedge e^\omega. 
\]
If $\C[\zeta]^N$ were a finite dimensional vector space, 
the top-degree component of the differential form 
$d\mu \wedge \frac{d\theta}{2\pi} \wedge e^{\omega}$ would equal 
the Liouville volume form on $\C[\zeta]^N$ associated to $\omega$. 
Therefore we may write this as 
\[
\int_{\C[\zeta]^N} \delta(\mu - u/z) e^{-zH_u} d \vol
\]
where $d \vol = \bigwedge_{i=1}^N \bigwedge_{n=0}^\infty 
\left(\frac{\iu}{2} da_{i,n} \wedge d\ov{a_{i,n}}\right)$ 
and $\delta(x)$ is the Dirac delta-function. 
Using $\delta(x) = \frac{1}{2\pi} \int_{-\infty}^\infty 
e^{\iu x \xi} d\xi$, we can compute this as: 
\begin{align}
\nonumber
\frac{1}{2\pi} 
\int_{\C[\zeta]^N} d\vol \int_{-\infty}^\infty & 
d\xi e^{\iu \xi (\mu - u/z)} 
e^{-zH_u}  \\ 
\nonumber
& = \frac{1}{2\pi} 
\int_{-\infty}^\infty d\xi 
e^{-\iu \xi u/z} 
\int_{\C[\zeta]^N} 
d \vol 
\prod_{i=1}^N \prod_{n=0}^\infty 
e^{- \pi |a_{i,n}|^2(n z - \iu \xi )} \\
\label{eq:before_zeta_reg}
& = \frac{1}{2\pi} \int_{-\infty}^\infty d\xi 
e^{-\iu \xi u/z} \prod_{i=1}^N \prod_{n=0}^\infty 
\frac{1}{n z - \iu \xi} 
\end{align}
By the $\zeta$-function regularization, we can regularize 
the infinite product to get: 
\[
\frac{1}{\prod_{n=0}^\infty (n z - \iu \xi)} 
\sim (2\pi z)^{-1/2} z^{-\iu \xi/z}
\Gamma(-\iu \xi/z). 
\]
Therefore \eqref{eq:before_zeta_reg} should equal, after 
the change $\xi \to z \xi$ of co-ordinates, 
\begin{equation} 
\label{eq:Gamma_Mellin_trans}
\frac{z}{2\pi (2\pi z)^{N/2}} 
\int_{-\infty}^\infty d\xi 
e^{-\iu (u + N\log z)\xi} \Gamma(-\iu \xi)^N.  
\end{equation} 
This integral makes sense if we perturb the integration 
contour so that it avoids the singularity at 
$\xi = 0$. We will consider the perturbed 
contour from $-\infty + \iu \epsilon$ to $\infty +\iu \epsilon$ 
with $\epsilon>0$. Then the integral \eqref{eq:Gamma_Mellin_trans} 
is just a (finite-dimensional) Fourier transform 
and the following discussion can be made completely rigorous. 
Using the integral representation $\Gamma(z) = \int_0^\infty 
e^{-x} z^{x-1} dx$ of the $\Gamma$-function, 
we find that \eqref{eq:Gamma_Mellin_trans} equals 
\begin{align}
\nonumber  
\frac{z}{2\pi (2\pi z)^{N/2}} &
\int_{-\infty}^\infty d\xi \int_{[0,\infty)^N} 
\frac{dx_1}{x_1} \cdots \frac{dx_N}{x_N}  
e^{-\iu \xi (u+N \log z+\sum_{i=1}^N \log x_i)} 
e^{-(x_1+ \cdots + x_N)} \\ 
\nonumber 
& = \frac{z}{(2\pi z)^{N/2}}
\int_{[0,\infty)^N} 
\frac{dx_1}{x_1} \cdots \frac{dx_N}{x_N}  
 \delta(u + N\log z+ \textstyle\sum_{i=1}^N \log x_i) 
e^{-(x_1+ \cdots + x_N)} \\ 
\label{eq:direct_cal} 
& = \frac{z}{(2\pi z)^{N/2}} 
\int_{[0,\infty)^{N-1}} 
\frac{dx_1}{x_1} \cdots \frac{dx_{N-1}}{x_{N-1}}  
e^{-\left( x_1+ \cdots + x_{N-1} + 
\frac{e^{-u}}{x_1 \cdots x_{N-1}}\right)/z} 
\end{align} 
where in the last line we considered the co-ordinate change
$x_i \to x_i/z$. The Landau--Ginzburg mirror of $\P^{N-1}$ is given 
by the Laurent polynomial function $f(x_1,\dots,x_{N-1}) = 
x_1 + \cdots + x_{N-1} + \frac{e^{-u}}{x_1\cdots x_{N-1}}$ 
and this is the associated oscillatory integral 
\cite{Givental:ICM,Hori-Vafa}. 

\subsection{Calculation by localization} 
\label{subsec:localization}
Next we calculate the quantity \eqref{eq:path_integral} 
using the localization formula of equivariant cohomology 
(or Duistermaat--Heckman formula) 
\cite{Duistermaat-Heckman, Berline-Vergne, Atiyah-Bott}. 
The $S^1$-fixed set in $\Delta$ is the disjoint union of 
$(\P^{N-1})_n$ with $n\ge 0$ 
and we have $[\Omega_u]|_{(\P^{N-1})_n} 
= u h - z n u$, where $h := c_1(\O(1)) 
\in H^2(\P^{N-1})$ is the hyperplane class. 
Therefore 
\[
\int_{\Delta} e^{\Omega_u/z} 
= \sum_{n=0}^\infty \int_{(\P^{N-1})_n} 
\frac{e^{uh/z - nu}}{e_{S^1}(\N_n)} 
\] 
where $\N_n$ is the (infinite-rank) normal bundle of $(\P^{N-1})_n$ in $\Delta$ 
and 
\[
e_{S^1}(\N_n) = \prod_{k\ge -n, k \neq 0} (h + kz)^N 
= \prod_{k=1}^\infty (h+ kz)^N \prod_{k=1}^n (h-kz)^N.  
\]
The infinite factor $\prod_{k=1}^\infty (h+ kz)^N$ was discarded 
in the original calculation of Givental \cite{Givental:homological}. 
Using again the $\zeta$-function regularization, we find that this 
factor yields the Gamma class:  
\[
\frac{1}{\prod_{k=1}^\infty (h + kz)} \sim 
\left( \frac{z}{2\pi}\right)^{1/2} z^{h/z} \Gamma(1+h/z). 
\]
Thus we should have
\begin{align*} 
\int_\Delta e^{\Omega_u/z} 
& = \sum_{n=0}^\infty \int_{\P^{N-1}} 
\left( \frac{z}{2\pi} \right)^{N/2} 
z^{Nh/z} \Gamma(1+h/z)^N 
\frac{e^{uh/z -nu}}{\prod_{k=1}^n (h-kz)^N}  \\ 
& = \frac{z}{(2\pi z)^{N/2}}
\int_{\P^{N-1}} \Gamma(1+h)^N \sum_{n=0}^\infty 
\frac{(e^u z^N)^{h-n}}{\prod_{k=1}^n (h-k)^N}.  
\end{align*} 
Recall that the $J$-function \eqref{eq:J-function} of $\P^{N-1}$ is given by 
\cite{Givental:equivariant}
\begin{equation} 
\label{eq:J-function_P} 
J_{\P^{N-1}} (t) = 
\sum_{n=0}^\infty \frac{t^{N (h+n)}}{\prod_{k=1}^n (h + k )^N}.   
\end{equation} 
Thus, using $\Gg_{\P^{N-1}}=\Gamma(1+h)^N$, we obtain 
\begin{equation} 
\label{eq:loc_cal} 
\int_{\Delta} e^{\Omega_u/z} = \frac{z}{(2\pi z)^{N/2}} 
(2\pi \iu)^{N-1} \left[ J_{\P^{N-1}}(e^{\pi\iu} t), \Gg_{\P^{N-1}} \right) 
\end{equation} 
under the identification $t = e^{-u/N} z^{-1}$, where 
$[\cdot,\cdot)$ is the non-symmetric pairing defined 
in \eqref{eq:pairing_[)}. 

\begin{remark}
\label{rem:centralcharges}  
The quantity \eqref{eq:loc_cal} coincides, up to a factor, 
with the \emph{quantum cohomology central charge} 
of $\O_{\P^{N-1}}$ \cite{Iritani09,GGI:gammagrass}. 
For a vector bundle $E$ on a Fano manifold $F$, the quantum cohomology 
central charge $Z(E)$ is defined to be: 
\begin{align*} 
Z(E) & = (2\pi\iu)^{\dim F} \left[J_F(e^{\pi\iu} t), \Gg_F \Ch(E)\right) \\ 
& = z^{\frac{\dim F}{2}} \left(1, S(z) z^{-\mu}z^{c_1(F)} \Gg_F \Ch(E)
\right). 
\end{align*} 
where $t = z^{-1}$. 
\end{remark} 

\subsection{Comparison} 
We computed the infinite dimensional integral \eqref{eq:path_integral} 
in two ways. 
Comparing \eqref{eq:direct_cal} and \eqref{eq:loc_cal}, we should have 
the equality 
\begin{equation} 
\label{eq:oscint_centralcharge}
\int_{[0,\infty)^{N-1}} 
e^{-\left( x_1+ \cdots + x_{N-1} + 
\frac{e^{-u}}{x_1 \cdots x_{N-1}}\right)/z} 
\frac{dx_1}{x_1} \cdots \frac{dx_{N-1}}{x_{N-1}}  
= (2\pi\iu)^{N-1} \left[J_{\P^{N-1}}(e^{\pi\iu} t), \Gg_{\P^{N-1}}\right) 
\end{equation}
with $t = e^{-u/N} z^{-1}$. 
This oscillatory integral representation 
yields the asymptotic expansion (as $t\to +\infty$) 
\[
Z(\O_{\P^{N-1}}) = (2\pi\iu)^{N-1} 
\left[J_{\P^{N-1}}(e^{\pi\iu} t), \Gg_{\P^{N-1}}\right) 
\sim \text{const}\times t^{-\frac{N-1}{2}} e^{-Nt} 
\]
which can be used to prove the Gamma conjecture for $\P^{N-1}$. 
See \S\ref{sec:toric} and 
\cite[\S 3.8]{GGI:gammagrass}. 

\begin{remark} 
We have a rigorous independent proof of 
the equality \eqref{eq:oscint_centralcharge} 
(see \cite{Iritani09,KKP08}). 
Recall that we can write the left-hand side 
as the Fourier transform \eqref{eq:Gamma_Mellin_trans} 
of $\Gamma(-\iu\xi)^N$; by closing the integration contour 
in the lower half $\xi$-plane and writing the integral as the sum of 
residues at $\xi = 0, -\iu,-2\iu,-3\iu,\dots$, we arrive at the expression 
on the right-hand side. 
\end{remark} 

\begin{remark} 
A similar regularization of an infinite dimensional integral 
appears in the computation of (sphere or hemisphere) 
partition functions of (2,2) supersymmetric gauge theories, 
see Benini-Cremonesi \cite{Benini-Cremonesi}, 
Doroud--Gomis--Le-Floch--Lee \cite{DGLFL} and 
Hori-Romo \cite{Hori-Romo}. It appears that 
the computations in \S \ref{subsec:direct} and \S\ref{subsec:localization} 
correspond, in the terminology of \cite{Benini-Cremonesi,DGLFL}, 
to the localization on the 
\emph{Coulomb branch} 
and on the \emph{Higgs branch} respectively. 
\end{remark} 

\section{Toric Manifold} 
\label{sec:toric}
In this section, we discuss Gamma conjecture for Fano toric manifolds. 
We prove Gamma conjecture I by assuming 
a certain condition for the mirror Laurent polynomial $f$ 
which is analogous to Property $\O$. 

Let $X$ be an $n$-dimensional Fano toric manifold. 
A mirror of $X$ is given by the Laurent polynomial 
\cite{Givental:ICM, Hori-Vafa, Givental:toric}:  
\[
f(x) = x^{b_1} + x^{b_2} + \cdots + x^{b_m}  
\]
where $x=(x_1,\dots,x_n) \in (\C^\times)^n$, 
$b_1,\dots,b_m \in \Z^n$ are primitive generators of 
the 1-dimensional cones of the fan of $X$ 
and $x^{b_i} = x_1^{b_{i1}} \cdots x_n^{b_{in}}$ 
for $b_i = (b_{i1},\dots,b_{in})$. 
By mirror symmetry \cite{Givental:toric, Iritani09}, 
the small quantum cohomology ring $(H^\udot(X),\star_0)$ is 
isomorphic to the Jacobian ring $\C[x_1^\pm,\dots,x_n^\pm]/
(\partial_{\log x_1} f,\dots, \partial_{\log x_n} f)$ and 
the first Chern class $c_1(X)$ corresponds to the class of $f$ 
in the Jacobian ring under this isomorphism. 
Therefore, the set of eigenvalues of $(c_1(X) \star_0)$ 
coincides with the set of critical values of $f$; moreover their multiplicities 
also coincide. 
The restriction of $f$ to the real locus $(\R_{>0})^n$ is 
strictly convex since the logarithmic Hessian 
\[
\parfrac{^2f}{\log x_i \partial \log x_j}(x) 
=\sum_{k=1}^m b_{kj} b_{ki} x^{b_k}
\]
is positive definite for any $x\in (\R_{>0})^n$. 
One can also show that $f|_{(\R_{>0})^n}$ is proper and bounded from below 
since the convex hull of $b_1,\dots,b_m$ contains the origin 
in its interior (cf.~Remark \ref{rem:ineq}). 
Therefore 
$f|_{(\R_{>0})^n}$ admits a global minimum at a unique 
point $x_{\rm con}\in (\R_{>0})^n$. We call $x_{\rm con}$ 
the \emph{conifold point} \cite{Galkin:conifoldpoint,GGI:gammagrass}.  
Consider the following condition: 
\begin{condition}[analogue of Property $\O$ for toric manifolds] 
\label{cond:toric_O} 
Let $X$ be a Fano toric manifold and $f$ be its mirror Laurent 
polynomial. 
Let $T_{\rm con}=f(x_{\rm con})$ be the value of $f$ at the conifold point  
$x_{\rm con}$. One has 
\begin{itemize} 
\item[(a)] every critical value $u$ of $f$ satisfies $|u|\le T_{\rm con}$;  
\item[(b)] the conifold point is the unique critical point of $f$ 
contained in $f^{-1}(T_{\rm con})$.   
\end{itemize} 
\end{condition} 

From the relationship between the quantum cohomology of $X$ 
and the Jacobian ring of $f$ as above, we know that 
Condition \ref{cond:toric_O} implies\footnote
{Note that Condition \ref{cond:toric_O} means slightly more than 
what Part (1) of Property $\O$ (Definition \ref{def:propertyO}) says 
for Fano toric manifolds;  
Condition \ref{cond:toric_O} additionally claims that $T= T_{\rm con}$.} 
Part (1) of Property $\O$ (Definition \ref{def:propertyO}). 
Recall that Part (1) was enough to make sense of Gamma Conjecture I 
(see Remark \ref{rem:part1_propertyO}). 

\begin{remark} 
One can define the conifold point for every Laurent polynomial 
such that the Newton polytope contains the origin in its interior 
and that all the coefficients are positive, but 
Condition \ref{cond:toric_O} does not always hold. 
For instance, the one-dimensional Laurent polynomial 
$f(x) = x^{-1} + x + t x^2$ with a sufficiently 
small $t>0$ does not satisfy Part (a) of the condition. 
Also, if the lattice generated by $b_1,\dots,b_m$ is not equal 
to $\Z^n$, then $f(x) = x^{b_1} + x^{b_2} + \cdots + x^{b_m}$ 
has non-trivial diagonal symmetry $\{\zeta \in (\C^\times)^n: 
f(x)  = f(\zeta x) \} \neq \{1\}$ and Part (b) of the condition fails.  
\end{remark} 

\begin{theorem}
\label{thm:toric_GammaI}
Suppose that a Fano toric manifold $X$ satisfies Condition \ref{cond:toric_O}. 
Then $X$ satisfies Gamma Conjecture I. 
\end{theorem}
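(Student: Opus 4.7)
The plan is to represent the flat section $S(z)z^{-\mu}z^{c_1(X)}\Gg_X$ as an oscillatory integral over the positive real cycle $(\R_{>0})^n$, extract its $z\to 0^+$ asymptotics via Laplace's method at the conifold point $x_{\rm con}$, and conclude using the one-dimensionality of the principal asymptotic space $\AA$ from Proposition \ref{prop:dimAA_is_one}.

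First I would invoke the toric mirror theorem together with the compatibility of the $\Gg$-integral structure with the toric Landau--Ginzburg model, as established in \cite{Iritani09}: for any class $\alpha\in H^\udot(X)$ represented by a Laurent polynomial $\tilde\alpha(x)$ under the Batyrev--Jacobian isomorphism, one has, for a canonical normalization,
\[
\bigl(\alpha,\, S(z)z^{-\mu}z^{c_1(X)}\Gg_X\bigr)_X \;=\; \frac{1}{(2\pi z)^{n/2}}\int_{(\R_{>0})^n} e^{-f(x)/z}\,\tilde\alpha(x)\,\frac{dx_1\cdots dx_n}{x_1\cdots x_n}.
\]
This is the mirror-theoretic statement that $(\R_{>0})^n$ corresponds to the structure sheaf $\O_X$ under the $\Gg$-integral structure. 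It is the toric analogue of the oscillatory integral identity derived heuristically for $\P^{N-1}$ in \S\ref{sec:heuristics}.

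Next I would apply Laplace's method in logarithmic coordinates $\log x_i$. The logarithmic Hessian $\partial^2 f/\partial\log x_i \partial\log x_j(x)=\sum_k b_{ki}b_{kj}x^{b_k}$ is positive definite on $(\R_{>0})^n$, so $f|_{(\R_{>0})^n}$ is strictly convex and proper; by Condition \ref{cond:toric_O} its unique global minimum is $T_{\rm con}$, attained at $x_{\rm con}$. Non-degeneracy of this saddle point yields the asymptotic
\[
\int_{(\R_{>0})^n}e^{-f/z}\,\tilde\alpha\,\frac{dx_1\cdots dx_n}{x_1\cdots x_n}\;\sim\; e^{-T_{\rm con}/z}(2\pi z)^{n/2}\bigl(a_0(\alpha)+a_1(\alpha)z+\cdots\bigr)
\]
as $z\to 0^+$, with $a_0(1)=(\det \mathrm{Hess}_{\log}f(x_{\rm con}))^{-1/2}\neq 0$. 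Ranging $\alpha$ over a basis of $H^\udot(X)$ then yields the uniform bound $\|e^{T_{\rm con}/z}S(z)z^{-\mu}z^{c_1(X)}\Gg_X\|=O(1)$ as $z\to 0^+$.

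Condition \ref{cond:toric_O} gives Part (1) of Property $\O$ with $T=T_{\rm con}$, which by Remark \ref{rem:part1_propertyO} suffices to formulate Gamma conjecture I, and by Proposition \ref{prop:dimAA_is_one} the space $\AA$ is one-dimensional, spanned by $S(z)z^{-\mu}z^{c_1(X)}A_X$. The bound above places $S(z)z^{-\mu}z^{c_1(X)}\Gg_X$ inside $\AA$, and $a_0(1)\neq 0$ shows it is a nonzero element; hence $\Gg_X=c\cdot A_X$ for some $c\in\C^\times$, and the normalizations $\ang{[\pt],\Gg_X}=\ang{[\pt],A_X}=1$ (see Definition \ref{def:principal_asymptotic}) force $c=1$.

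The main obstacle is the first step: the oscillatory-integral presentation of the $\Gg$-flat section combines Givental's mirror theorem for the quantum $D$-module with the matching of the $\Gg$-integral structure on the A-side with the natural lattice of Lefschetz thimbles on the B-side, which is carried out in \cite{Iritani09}. Secondary technical points are the uniformity of the Laplace expansion across a basis of $\tilde\alpha$ (routine given non-degeneracy and interiority of $x_{\rm con}$) and the verification that Condition \ref{cond:toric_O} implies Property $\O$(1), which is immediate from the Jacobian-ring description of the small quantum cohomology.
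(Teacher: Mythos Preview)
Your proposal is correct and follows essentially the same approach as the paper: invoke the oscillatory-integral representation of the flat section $S(z)z^{-\mu}z^{c_1(X)}\Gg_X$ over $(\R_{>0})^n$ from \cite{Iritani09}, then use the conifold-point asymptotics to place this section in $\AA$. The paper's proof is terser---it simply cites \cite[\S 4.3.1]{Iritani09} for the integral formula and states the $O(1)$ bound directly---while you spell out the Laplace-method step and the normalization $\ang{[\pt],\Gg_X}=1$; two minor discrepancies worth noting are that the mirror correspondence in \cite{Iritani09} allows the integrand $\varphi(x,z)$ to depend polynomially on $z$ (not just a Laurent polynomial $\tilde\alpha(x)$), and the non-vanishing of the flat section is immediate from $\Gg_X\neq 0$ and invertibility of the fundamental solution, so the $a_0(1)\neq 0$ check is not strictly needed.
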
 
\begin{proof} 
It follows from the argument in \cite[\S 4.3.1]{Iritani09} that 
\[
z^{n/2}\left(\phi, S(z) z^{-\mu} z^{c_1(X)} \Gg_X \right)_X =
\int_{(\R_{>0})^n} e^{-f(x)/z} \varphi(x,z) 
\frac{dx_1\cdots dx_n}{x_1\cdots x_n} 
\]
for $z>0$, where $\phi\in H^\udot(X)$ and 
$\varphi(x,z)\in \C[x_1^\pm,\dots,x_n^\pm,z]$ 
is such that the class $[\varphi(x,z) e^{-f(x)/z} dx_1
\cdots dx_n/(x_1\cdots x_n)]$ corresponds to 
$\phi$ under the mirror isomorphism in \cite[Proposition 4.8]{Iritani09} 
and $n= \dim X$. When $\varphi=1$, one has $\phi=1$ and 
this gives the integral representation 
\[
Z(\O_X) = \int_{(\R_{>0})^n} e^{-f(x)/z} 
\frac{dx_1\cdots dx_n}{x_1 \cdots x_n}
\]
of the quantum cohomology central charge $Z(\O_X)$ 
from Remark \ref{rem:centralcharges}. 
We already saw this in \eqref{eq:oscint_centralcharge} for $X = \P^{N-1}$. 
This integral representation implies that the flat section 
$S(z)z^{-\mu}z^{c_1(X)}\Gg_X$ has the following asymptotics as 
$z\to +0$, i.e.~  
\[
\left\|e^{T_{\rm con}/z} S(z) z^{-\mu} z^{c_1(X)} \Gg_X\right\| 
= O(1) 
\] 
where recall that $T_{\rm con}$ is the global minimum of 
$f$ on $(\R_{>0})^n$. 
Condition \ref{cond:toric_O} ensures that $T=T_{\rm con}$ and 
hence 
the conclusion follows. 
\end{proof} 

A toric manifold has a generically semisimple quantum 
cohomology. 
We note that the following weaker version of 
Gamma Conjecture II (Conjecture \ref{conj:GammaII}) 
can be shown for a toric manifold. 

\begin{theorem}[\cite{Iritani09}]
\label{thm:toric_GammaII}  
Let $X$ be a Fano toric manifold. 
We choose a semisimple point 
$\tau\in H^\udot(X)$ and an admissible phase $\phi$. 
There exists a $\Z$-basis $\{[E_1],\dots,[E_N]\}$ of the 
$K$-group such that the matrix $(\chi(E_i,E_j))_{1\le i,j\le N}$ 
is uni-uppertriangular and that 
the asymptotic basis (see \S \ref{subsec:asymp_basis_GII}) 
of $X$ at $\tau$ with respect to $\phi$ is given by $\Gg_X \Ch(E_i)$, 
$i=1,\dots,N$. 
\end{theorem}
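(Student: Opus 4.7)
The plan is to combine the Landau--Ginzburg mirror $f \colon (\C^\times)^n \to \C$ of $X$ with the oscillatory integral representation of flat sections for the quantum connection, and with the mirror correspondence between $K$-theory and Lefschetz thimbles established in \cite{Iritani09}. The starting point is the identity, valid for every $[E] \in K^0(X)$,
\[
z^{n/2}\left(1,\, S(z) z^{-\mu} z^{c_1(X)} \Gg_X \Ch(E)\right)_X
= \int_{\Gamma_E} e^{-f_\tau(x)/z}\, \omega_E(x,z)\, \frac{dx_1 \cdots dx_n}{x_1 \cdots x_n},
\]
which extends the representation used in the proof of Theorem~\ref{thm:toric_GammaI}. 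Here $f_\tau$ is a mirror-symmetric deformation of $f$ encoding the parameter $\tau$, $\omega_E(x,z)$ is a polynomial determined by $E$, and the integration cycle $\Gamma_E \subset (\C^\times)^n$ is fixed (up to homology) by the $K$-theory class of $E$ via the mirror isomorphism of \cite{Iritani09}.

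Working at a semisimple parameter $\tau$, the deformed potential $f_\tau$ has $N = \dim H^\udot(X)$ non-degenerate critical points $p_1, \ldots, p_N$ with critical values $u_1, \ldots, u_N$ equal to the eigenvalues of the Euler multiplication at $\tau$ (see Remark~\ref{rem:big_qconn}). For an admissible phase $\phi$, I would consider the Lefschetz thimble $\Gamma_i$ obtained as the unstable manifold of the gradient flow of $\re(e^{-\iu\phi} f_\tau)$ from $p_i$; the classes $[\Gamma_1], \ldots, [\Gamma_N]$ form a $\Z$-basis of the relative homology group $H_n((\C^\times)^n, \{\re(e^{-\iu\phi} f_\tau) \gg 0\}; \Z)$. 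By the integral-structure theorem of \cite{Iritani09}, the oscillatory integral pairing identifies this lattice with the image of the map $[E] \mapsto \Gg_X \Ch(E)$; the preimages of $[\Gamma_i]$ provide the desired $\Z$-basis $[E_1], \ldots, [E_N]$ of $K^0(X)$. Applying stationary phase to $\int_{\Gamma_i} e^{-f_\tau/z} \omega_{E_i}\, \tfrac{dx_1 \cdots dx_n}{x_1 \cdots x_n}$ gives an asymptotic expansion of the form $e^{-u_i/z}$ times a formal series in $z$ as $z \to 0$ in the sector $|\arg z - \phi| < \tfrac{\pi}{2} + \epsilon$; comparison with Proposition~\ref{prop:analytic_solution} and the definition of the asymptotic basis in \S\ref{subsec:asymp_basis_GII} then yields $A_{X,i}^{\phi,\tau} = \Gg_X \Ch(E_i)$.

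For uni-uppertriangularity of $(\chi(E_i, E_j))$, I would order the eigenvalues so that $\im(e^{-\iu\phi} u_1) \ge \cdots \ge \im(e^{-\iu\phi} u_N)$ and compute the Seifert/intersection pairing of the Lefschetz thimbles directly. The dual thimble $\Gamma_i^\vee$ associated to the opposite direction $-e^{\iu\phi}$ misses $\Gamma_j$ whenever $\im(e^{-\iu\phi} u_i) > \im(e^{-\iu\phi} u_j)$, by admissibility of $\phi$, while on the diagonal one has a single transverse intersection at $p_i$ contributing intersection number one. Under the mirror $\Z$-isomorphism this Seifert form corresponds to the Euler pairing on $K^0(X)$, yielding the uni-uppertriangular shape of $(\chi(E_i, E_j))$. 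The main obstacle is the construction of the mirror $\Z$-isomorphism compatibly with both $\Gg_X \Ch(\cdot)$ and the Euler/Seifert pairings; this integral-structure theorem is the technical heart of \cite{Iritani09}, and once it is granted the remainder of the argument is a combination of standard Picard--Lefschetz theory and stationary phase.
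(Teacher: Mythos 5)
Your approach is fundamentally the same as the paper's: identify the asymptotic basis with the images of Lefschetz thimbles of the mirror Landau--Ginzburg potential under the mirror isomorphism between the $\Gg$-integral structure on $K^0(X)$ and the natural integral structure on relative homology, which is Theorem~4.11 of \cite{Iritani09}. The stationary-phase comparison and the Picard--Lefschetz computation of the Seifert form that you spell out are exactly the content already packaged into that cited theorem, so the core engine is identical.

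The one genuine gap is at the very start: you ``work at a semisimple parameter $\tau$'' and assume a mirror family $f_\tau$ with $N$ nondegenerate critical points is available for that $\tau$. But the theorem quantifies over \emph{every} semisimple $\tau\in H^\udot(X)$, whereas the toric Landau--Ginzburg mirror (and the integral-structure theorem of \cite{Iritani09}) is only constructed for $\tau$ in the image of the mirror map, which roughly covers only a $H^2$-direction slice. For a general semisimple $\tau\in H^\udot(X)$ there is no Landau--Ginzburg potential $f_\tau$ available to you, so your oscillatory integral representation does not directly apply. The paper closes this gap with the mutation argument: by Remark~\ref{rem:mutation} the asymptotic basis changes under variation of $(\tau,\phi)$ precisely by mutation, and mutations of the asymptotic basis have $K$-theoretic counterparts that preserve the statement; hence it suffices to verify the theorem at a single semisimple $\tau$ in the image of the mirror map. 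Without this reduction step your argument only proves the theorem for $\tau$ in the mirror map's image, not for all semisimple $\tau$ as claimed.
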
 
\begin{proof} 
Recall from Remark \ref{rem:mutation} that the asymptotic basis 
makes sense also for the big quantum product $\star_\tau$ 
and that it changes by mutation as $\tau$ varies. 
By the theory of mutation, 
it suffices to prove the theorem at a single semisimple point $\tau$. 
If $\tau$ is in the image of the mirror map, we can take $[E_i]$ to be 
the mirror images of the Lefschetz thimbles (with phase $\phi$) 
under the isomorphism between the integral structures 
of the A-model and of the B-model, 
given in \cite[Theorem 4.11]{Iritani09}. 
\end{proof} 

\begin{remark}[\cite{Iritani09}]
Results similar to Theorems \ref{thm:toric_GammaI}, \ref{thm:toric_GammaII} 
hold for a weak-Fano toric orbifold. In the weak-Fano case, however, we 
need to take into consideration the effect of the mirror map. 
\end{remark} 
\begin{remark} 
Kontsevich's homological mirror symmetry suggests that the basis 
$[E_1],\dots,[E_N]$ in the $K$-group 
should be lifted to an exceptional collection in 
$D^b_{\rm coh}(X)$ because the corresponding 
Lefschetz thimbles form an exceptional collection in 
the Fukaya--Seidel category of the mirror. 
\end{remark} 

\section{Toric complete intersections} 
\label{sec:toric_CI} 
In this section we discuss Gamma conjecture I for a Fano complete 
intersection $Y$ in a toric manifold $X$. 
Again the problem comes down to the truth of a mirror analogue 
of Property $\O$. 

We begin with the remark that the number $T$ can be evaluated 
on the ambient part. 
Let 
\[
H_{\rm amb}^\udot(Y) := \im(i^* \colon  H^\udot(X)\to H^\udot(Y))
\]
denote the \emph{ambient part} of the cohomology group, 
where $i\colon Y \to X$ is the inclusion. 
It is shown in \cite[Proposition 4]{Pan98}, \cite[Corollary 2.5]{Iritani11} 
that the ambient part $H_{\rm amb}^\udot(Y)$ 
is closed under quantum multiplication $\star_\tau$ when 
$\tau \in H_{\rm amb}^\udot(Y)$. 
Note that Givental's mirror theorem \cite{Givental:toric} determines 
this ambient quantum cohomology $(H^\udot_{\rm amb}(Y), \star_\tau)$. 

\begin{proposition} 
\label{prop:ambient_evaluation}
The spectrum of $(c_1(Y)\star_0)$ on $H^\udot(Y)$ 
is the same as the spectrum of $(c_1(Y) \star_0)$ 
on $H^\udot_{\rm amb}(Y)$ as a set (when we ignore 
the multiplicities). 
In particular the number $T$ for $Y$ (see \eqref{eq:T}) 
can be evaluated on the ambient part.  
\end{proposition}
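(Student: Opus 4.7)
The plan is to interpret $H^\udot(Y)$ as a module over the subring $H^\udot_{\rm amb}(Y) \subset (H^\udot(Y),\star_0)$ and then to invoke a standard commutative-algebra comparison of spectra. The invariance $H^\udot_{\rm amb}(Y)\star_0 H^\udot_{\rm amb}(Y)\subseteq H^\udot_{\rm amb}(Y)$ recalled just before the proposition, together with the commutativity of $\star_0$ and the fact that $1\in H^0(Y)$ is ambient, makes $R := H^\udot_{\rm amb}(Y)$ a commutative finite-dimensional $\C$-subalgebra of $(H^\udot(Y),\star_0)$, and $M := H^\udot(Y)$ a finite-dimensional $R$-module via $\star_0$; in particular $R\subseteq M$ as $R$-modules.

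The key observation I would use is the elementary fact that for any commutative finite-dimensional $\C$-algebra $R$ and any $R$-module $M$, the spectrum of multiplication by $r\in R$ on $M$ is contained in the spectrum of multiplication by $r$ on $R$. Indeed, if $r - \lambda$ is a unit in $R$ with inverse $u$, then multiplication by $u$ serves as a two-sided inverse to multiplication by $r - \lambda$ on any $R$-module, so $\lambda$ cannot be an eigenvalue of $r$ on $M$. Contrapositively, every eigenvalue of $r$ on $M$ is a value of $r$ at some point of $\Spec(R)$, hence an eigenvalue of $r$ acting on $R$.

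Applying this with $r = c_1(Y)$ gives the non-trivial inclusion $\spec(c_1(Y)\star_0;H^\udot(Y))\subseteq \spec(c_1(Y)\star_0;H^\udot_{\rm amb}(Y))$. The reverse inclusion is automatic since $H^\udot_{\rm amb}(Y)$ is an invariant subspace of $(c_1(Y)\star_0)$: any eigenvector in the ambient part is an eigenvector in the full cohomology with the same eigenvalue. Combining the two inclusions yields the claimed equality of spectra as sets, from which the statement about $T$ follows immediately.

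There is no substantial obstacle in this approach: once the invariance of the ambient subspace under $\star_0$ is quoted, the remainder is a short application of commutative algebra. The only conceptual step is to recognise that the ambient cohomology, being closed under $\star_0$, acts on the whole of $H^\udot(Y)$ as a commutative subring, so that the set of possible eigenvalues of $c_1(Y)\star_0$ is controlled by $\Spec$ of this subring.
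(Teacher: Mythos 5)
Your proof is correct and takes essentially the same route as the paper, which simply cites that $c_1(Y)\in H^\udot_{\rm amb}(Y)$ and that $H^\udot(Y)$ is a module over the algebra $(H^\udot_{\rm amb}(Y),\star_0)$. You have merely spelled out the underlying commutative-algebra lemma (spectrum of $r$ on any $R$-module is contained in its spectrum on $R$, plus the trivial reverse inclusion from $R\subseteq M$) that the paper leaves implicit.
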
 
\begin{proof} 
It follows from the fact that $c_1(Y)$ belongs to $H_{\rm amb}^\udot(Y)$ 
and the fact that $H^\udot(Y)$ is a module over the algebra 
$(H^\udot_{\rm amb}(Y),\star_0)$. 
\end{proof} 

We assume that $Y$ is a complete intersection in $X$ 
obtained from the following \emph{nef partition} 
\cite{Batyrev-Borisov, Givental:toric}. 
Let $D_1,\dots, D_m$ denote prime toric  divisors of $X$. 
Each prime toric divisor $D_i$ corresponds to a primitive generator 
$b_i\in \Z^n$ of a 1-dimensional cone of the fan of $X$. 
We assume that there exists a partition 
$\{1,\dots,m\} = I_0 \sqcup I_1 \sqcup \cdots \sqcup I_l$ 
such that $\sum_{i\in I_k} D_i$ is nef for $k=1,\dots,l$ 
and $\sum_{i\in I_0} D_i$ is ample. 
Let $\L_k := \O(\textstyle\sum_{i\in I_k} D_i)$ be the 
corresponding nef 
line bundle. We assume that $Y$ is the zero-locus of a transverse 
section of $\bigoplus_{k=1}^l \L_k$ over $X$. 
Then $c_1(Y) = c_1(X) - \sum_{k=1}^l c_1(\L_k) = 
\sum_{i\in I_0} [D_i]$ is ample and $Y$ is a Fano manifold. 
Define the Laurent polynomial functions $f_0,f_1,\dots, f_l \colon (\C^\times)^n 
\to \C$ as follows: 
\[
f_k(x)  =-  c_0 \delta_{0,k} + \sum_{i\in I_k} x^{b_i} 
\]
where $c_0$ is the constant arising from 
Givental's mirror map \cite{Givental:toric}: 
\[
c_0 := \sum_{\substack{d\in \Eff(X) : c_1(Y) \cdot d =1 \\ 
\qquad \quad [D_i]\cdot d\ge 0\, (\forall i)}} 
\frac{\prod_{k=1}^l (c_1(\L_k)\cdot d)! }
{\prod_{i=1}^m ([D_i] \cdot d)!}. 
\]
A mirror of $Y$ \cite{Hori-Vafa, Givental:toric} is the affine variety 
$Z := \{x \in (\C^\times)^n : f_1(x) = \cdots = f_l(x)=1\}$ 
equipped with a function $f_0 \colon Z \to \C$ and 
a holomorphic volume form 
\[
\omega_{Z}:= \frac{\frac{dx_1}{x_1}\wedge \cdots \wedge 
\frac{dx_n}{x_n}}
{d f_1 \wedge \cdots \wedge df_l}. 
\]
Consider the following condition, which implies Part (1) of Property $\O$ 
for $Y$. 

\begin{condition}[analogue of Property $\O$ for $Y$] 
\label{cond:compint_O}  
Let $Y$, $Z$ and $f_0 \colon Z \to \C$  
be as above. 
Let $Z_{\rm real} := 
\{(x_1,\dots,x_n) \in Z : x_i \in \R_{>0} \ (\forall i) \}$ 
be the positive real locus of $Z$. We have: 
\begin{itemize} 
\item[(a)] $Z$ is a smooth complete intersection of dimension $n-l$; 
\item[(b)] $f_0|_{Z_{\rm real}} \colon Z_{\rm real} \to \R$ 
attains global minimum at a unique critical point 
$x_{\rm con} \in Z_{\rm real}$; we call it \emph{the conifold point}; 
\item[(c)] the conifold point $x_{\rm con}$ is a non-degenerate 
critical point of $f_0$; 
\item[(d)] the number $T$ \eqref{eq:T} for $Y$ coincides 
with $T_{\rm con}:=f_0(x_{\rm con})$;  
\item[(e)] $T_{\rm con}$ is an eigenvalue of 
$(c_1(Y)\star_0)$ with multiplicity one. 
\end{itemize} 
\end{condition} 

\noindent 
{\bf Variant.}
We can consider the weaker version of Condition \ref{cond:compint_O} 
where part (e) is replaced with 
\begin{itemize}
\item[(e$'$)] $T_{\rm con}$ is an eigenvalue of $(c_1(Y)\star_0)
|_{H_{\rm amb}(Y)}$ 
with multiplicity one. 
\end{itemize} 
When (a), (b), (c), (d), (e$'$) hold, we say that $Y$ 
satisfies Condition \ref{cond:compint_O} \emph{on the ambient part}.

\begin{example} 
Let $Y$ be a degree $d$ hypersurface in $\P^n$ with $d<n+1$. 
The mirror is given by the function 
\[
f_0(x) = \frac{1}{x_1x_2\cdots x_n} + x_1 + \cdots + x_{n-d}
- \delta_{d,n} d!  
\]
on the affine variety $Z = \{ x \in (\C^\times)^n:
 f_1(x) = x_{n-d+1} + x_{n-d+2} + \cdots + x_n =1\}$. 
Following Przyjalkowski \cite{Przyjalkowski:comp}, 
introduce homogeneous co-ordinates 
$[y_{n-d+1}: \dots : y_n]$ of $\P^{n-d-1}$ and 
consider the change of variables: 
\[
x_i = \frac{y_i}{y_{n-d+1}+ \cdots + y_n}  \quad 
\text{for} \quad n-d+1 \le i\le n. 
\]
Then we have 
\[
f_0(x) = \frac{(y_{n-d+1} + \cdots + y_n)^d}{\prod_{i=1}^{n-d} x_i  
\cdot \prod_{j=n-d+1}^n y_j}
+ x_1 + \cdots + x_{n-d} - \delta_{n,d} d! 
\]
Setting $y_n=1$ we obtain a Laurent polynomial expression 
(with positive coefficients) for $f_0$ in the variables 
$x_1,\dots,x_{n-d}, y_{n-d+1},\dots,y_{n-1}$. 
Note that the change of variables maps $Z_{\rm real}$ 
isomorphically onto the real locus 
$\{x_i>0, y_j >0 : 1\le i\le n-d, n-d+1 \le j\le n-1\}$. 
As in the case of mirrors of toric manifolds (see \S\ref{sec:toric}), 
the Laurent polynomial expression of $f_0$ 
shows the existence of a conifold point in Condition \ref{cond:compint_O}. 
(Similarly, the mirrors of complete intersections in weighted projective spaces 
have Laurent polynomial expressions 
\cite{Przyjalkowski:comp}.) 

We have $T_{\rm con} = (n+1-d) d^{d/(n+1-d)} - \delta_{n,d} d!$.  
It is easy to check that $T_{\rm con}$ 
coincides with $T$ for $Y$ 
and gives a simple eigenvalue of 
$(c_1(Y)\star_0)|_{H^\udot_{\rm amb}(Y)}$ restricted to the ambient part 
(using Givental's mirror theorem \cite{Givental:toric}). 
Therefore $Y$ satisfies Condition \ref{cond:compint_O} on the ambient part. 

Furthermore, if the Fano index $n+1-d$ is greater than one, we have  
$c_1(Y)\star_0 \theta = 0$ for every primitive class $\theta \in H^{n-1}(Y)$. 
This follows from the fact that $(c_1(Y)\star_0)$ preserves the primitive 
part in $H^{n-1}(Y)$ and for degree reasons. 
Also if $\dim Y = n-1$ is odd, $Y$ has no even degree primitive classes 
(see Appendix \ref{app:odd} for the treatment of odd classes). 
Therefore we conclude that $Y$ satisfies 
Condition \ref{cond:compint_O} except possibly for 
even-dimensional index-one hypersurfaces.  
\end{example} 

\begin{remark}[\cite{Iritani11}]
\label{rem:ineq}
Using an inequality of the form $\beta_1 u_1 + \cdots + \beta_m u_m 
\ge u_1^{\beta_1} \cdots u_m^{\beta_m}$ for $u_i>0$, 
$\beta_i>0$, $\sum_i \beta_i =1$, we find that 
$f_0|_{Z_{\rm real}}$ is bounded (from below) by a convex function: 
\[
f_0(x) + l = f_0(x) + f_1(x) + \cdots + f_l(x)  \ge 
-c_0 + 
\epsilon \sum_{i=1}^n (x_i^{1/k} + x_i^{-1/k}) 
\qquad 
\forall x \in Z_{\rm real} 
\]
where $\epsilon>0$ and $k>0$ are constants. 
In particular $f_0|_{Z_{\rm real}} \colon Z_{\rm real} \to \R$ 
is proper and attains global minimum at some point. 
It also follows that the oscillatory integral \eqref{eq:osciint_compint} below 
converges. 
\end{remark} 

\begin{theorem}
\label{thm:toriccomp_GammaI}
Let $Y$ be a Fano complete intersection in a toric 
manifold constructed from a nef partition as above. 
If $Y$ satisfies Condition \ref{cond:compint_O}, 
$Y$ satisfies Gamma Conjecture I. 
If $Y$ satisfies Condition \ref{cond:compint_O} on the 
ambient part, the ambient quantum cohomology of $Y$ satisfies 
Gamma conjecture I. 
\end{theorem}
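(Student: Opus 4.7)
The plan is to follow the strategy of Theorem \ref{thm:toric_GammaI}, establishing an integral representation of the quantum cohomology central charge $Z(\O_Y)$ in terms of the mirror oscillatory integral over the real cycle $Z_{\rm real}$, and then extracting the exponential decay rate via Laplace's method.

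First, I would obtain an identity of the form
\[
Z(\O_Y) = z^{(n-l)/2}\bigl(1,\, S(z) z^{-\mu} z^{c_1(Y)}\Gg_Y\bigr)_Y
= \int_{Z_{\rm real}} e^{-f_0(x)/z}\, \omega_Z
\]
for $z>0$, and similarly an ambient version in which $1$ on the left may be replaced by any ambient class $i^*\phi$, paired against a Laurent polynomial representative mirror to $\phi$. Such an identity is the complete-intersection analogue of the formula used in the proof of Theorem \ref{thm:toric_GammaI}; it comes from combining Givental's toric mirror theorem \cite{Givental:toric} with the compatibility of the $\Gg$-integral structure with the B-model integral structure proved in \cite{Iritani09, Iritani11}. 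The constant $c_0$ in the definition of $f_0$ encodes exactly the mirror-map shift that is needed in the Fano case to identify the flat sections on the two sides.

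Second, I would apply the Laplace (saddle-point) method to the integral on the right. By Condition \ref{cond:compint_O}(a), $Z$ is smooth of the expected dimension $n-l$, so $\omega_Z$ is a well-defined holomorphic volume form on a neighborhood of $Z_{\rm real}$ in $Z$. By part (b), the function $f_0|_{Z_{\rm real}}$ attains its global minimum uniquely at the conifold point $x_{\rm con}$, and by part (c) this critical point is non-degenerate. Properness of $f_0|_{Z_{\rm real}}$ (Remark \ref{rem:ineq}) ensures that contributions away from $x_{\rm con}$ decay strictly faster than $e^{-T_{\rm con}/z}$. Laplace's method therefore yields an asymptotic expansion
\[
\int_{Z_{\rm real}} e^{-f_0(x)/z}\,\omega_Z
\sim C\, z^{(n-l)/2} e^{-T_{\rm con}/z}\bigl(1 + O(z)\bigr), \qquad z \to +0,
\]
with $C \neq 0$ proportional to $(\det \operatorname{Hess}(f_0)(x_{\rm con}))^{-1/2}\,\omega_Z(x_{\rm con})$.

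Third, by Condition \ref{cond:compint_O}(d) we have $T = T_{\rm con}$, so the previous asymptotic gives
\[
\bigl\| e^{T/z}\, S(z) z^{-\mu} z^{c_1(Y)}\Gg_Y \bigr\| = O(z^{-m})
\]
for some $m$, i.e.\ the flat section $S(z) z^{-\mu} z^{c_1(Y)}\Gg_Y$ lies in $\AA$. Part (e) of the condition and Proposition \ref{prop:dimAA_is_one} together give $\dim_\C \AA = 1$, and the non-vanishing of the leading coefficient $C$ shows that this flat section generates $\AA$. Hence $\Gg_Y$ equals the principal asymptotic class $A_Y$ up to scalar, and the normalization $\langle [\pt], \Gg_Y\rangle = 1$ pins down the scalar, establishing Gamma Conjecture I for $Y$. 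For the ambient-part statement one replaces $H^\udot(Y)$ by $H^\udot_{\rm amb}(Y)$ throughout and uses Proposition \ref{prop:ambient_evaluation} together with condition (e$'$); the oscillatory integral already produces a flat section with values in $H^\udot_{\rm amb}(Y)$ under the mirror isomorphism, so no extra argument is needed.

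The main obstacle is the first step: rigorously lifting the mirror-symmetric identification between oscillatory integrals on $(Z, f_0)$ and central charges of objects of $D^b_{\rm coh}(Y)$ from the Calabi--Yau setting of \cite{Iritani11} to the present Fano setting, with the correct handling of the mirror-map constant $c_0$ and, in the ambient version, of the restriction to $H^\udot_{\rm amb}(Y)$. Once this identification is in place, the saddle-point analysis and the bookkeeping with $\AA$ proceed exactly as in the toric case.
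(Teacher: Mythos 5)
Your proposal follows essentially the same route as the paper's proof. Both start from the integral representation $Z(\O_Y)=z^{\dim Y/2}\bigl(1,S(z)z^{-\mu}z^{c_1(Y)}\Gg_Y\bigr)_Y=\int_{Z_{\rm real}}e^{-f_0/z}\omega_Z$ from \cite[Theorem~5.7]{Iritani11}, extend it to pairings with general $\phi\in H^\udot_{\rm amb}(Y)$ (the paper's mechanism is to differentiate the $\tau$-family of mirror identities in the $H^2_{\rm amb}(Y)$-directions and use that $H^\udot_{\rm amb}(Y)$ is generated in degree~$2$, which is the precise version of your ``pair against a Laurent polynomial representative mirror to $\phi$''), and then extract the exponential rate $e^{-T_{\rm con}/z}$ from Condition~\ref{cond:compint_O}. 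One small remark: the nondegeneracy of the Hessian and the nonvanishing of the leading Laplace coefficient $C$, while true, are not actually needed. It suffices to obtain the upper bound $\|e^{T/z}S(z)z^{-\mu}z^{c_1(Y)}\Gg_Y\|=O(z^{-m})$, placing the (automatically nonzero) flat section in $\AA$; then Part~(e) of Condition~\ref{cond:compint_O} together with Proposition~\ref{prop:dimAA_is_one} gives $\dim\AA=1$ and hence $A_Y\propto\Gg_Y$, which is how the paper argues.
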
 
\begin{proof} 
In \cite[Theorem 5.7]{Iritani11}, it is proved that the quantum 
cohomology central charge (see Remark \ref{rem:centralcharges}) 
of $\O_Y$ has an integral representation: 
\begin{equation}
\label{eq:osciint_compint}
Z(\O_Y) = z^{\frac{\dim Y}{2}} 
\left(1, S(z)z^{-\mu}z^{c_1(Y)}\Gg_Y \right)_X  
= \int_{Z_{\rm real}} e^{-f_0/z} \omega_Z. 
\end{equation} 
We note that the constant term $c_0$ in $f_0$ comes from the value of 
the mirror map at $\tau=0$. 
In \cite{Iritani11}, a slightly more general statement was proved: 
we have mirrors $(Z_\tau,f_{0,\tau},\omega_{Z,\tau})$ 
parameterized by $\tau \in H^2_{\rm amb}(Y)$ 
and 
\[
z^{\frac{\dim Y}{2}} \left(1, S(\tau,z) z^{-\mu} z^{c_1(Y)} \Gg_Y 
\right)_Y = \int_{Z_{\tau,{\rm real}}} e^{f_{0,\tau}/z} \omega_{Z,\tau}
\]
where $S(\tau,z)z^{-\mu} z^{c_1(Y)}$ 
is the fundamental solution for the big quantum 
connection \eqref{eq:big_qconn} which restricts to 
the one in Proposition \ref{prop:fundsol} at $\tau=0$.  
By differentiating this in the $\tau$-direction, we obtain 
\[
z^{\frac{\dim Y}{2}} \left( z \nabla_{\alpha_1} 
\cdots z \nabla_{\alpha_k} 1, S(\tau,z) z^{-\mu} z^{c_1(Y)} \Gg_Y 
\right)_Y = z \partial_{\alpha_1} \cdots z \partial_{\alpha_k} 
\int_{Z_{\tau, {\rm real}}} e^{-f_{0,\tau}/z} \omega_{Z,\tau} 
\]
for $\alpha_1,\dots,\alpha_k \in H^2(Y)$. 
Since $H^\udot_{\rm amb}(Y)$ is generated by $H^2_{\rm amb}(Y)$, 
we obtain an integral representation of the form: 
\[
z^{\frac{\dim Y}{2}} \left(\phi, S(z) z^{-\mu}z^{c_1(Y)}
\Gg_Y \right)_Y 
= \int_{Z_{\rm real}} \varphi(x,z) e^{-f_0/z} \omega_Z 
\]
for every $\phi \in H^\udot_{\rm amb}(Y)$ (for some 
function $\varphi(x,z)$). 
Since the flat section $S(z) z^{-\mu} z^{c_1(Y)} \Gg_Y$ 
takes values in the ambient part $H_{\rm amb}^\udot(Y)$, 
we obtain integral representations for all components 
of $S(z) z^{-\mu} z^{c_1(Y)} \Gg_Y$. 
These integral representations and Condition \ref{cond:compint_O} 
show that $\|e^{T/z}S(z) z^{-\mu} z^{c_1(Y)} \Gg_Y\|$ grows at most 
polynomially as $z\to +0$. 
\end{proof}

\section{Quantum Lefschetz} 
\label{sec:Lefschetz} 
In this section we show that Gamma conjecture I 
is compatible with the quantum Lefschetz principle. 

Let $X$ be a Fano manifold of index $r\ge 2$. 
We write $-K_X = rh$ for an ample class $h$. 
Let $Y\subset X$ be a degree-$a$ Fano hypersurface 
in the linear system $|ah|$ with $0<a<r$. 
Assuming the truth of Gamma Conjecture I for $X$, we study 
Gamma Conjecture I for $Y$. 

We write the $J$-function of $X$ \eqref{eq:J-function} as 
\[
J_X(t) = e^{rh \log t} \sum_{n=0}^\infty J_{rn} t^{rn} 
\]
with $J_d\in H^\udot(X)$. By the quantum Lefschetz theorem 
\cite{YPLee:qLefschetz, Coates-Givental}, 
the $J$-function of $Y$ is 
\begin{equation}
\label{eq:qLefschetz_J} 
J_Y(t) = e^{(r-a)h \log t-c_0 t} \sum_{n=0}^\infty (ah+1) \cdots (ah+an) 
(i^*J_{rn})  t^{(r-a)n}, 
\end{equation} 
where $i\colon Y \to X$ is the natural inclusion and $c_0$ is the constant: 
\begin{equation} 
\label{eq:c0_general_hypersurface} 
c_0 = \begin{cases} 
a! \Ang{[\pt],J_r} = 
a! \sum_{h \cdot d = 1} 
\langle [\pt] \psi^{r-2} \rangle_{0,1,d}^X & \text{if $r-a=1$}; \\ 
0 & \text{if $r-a>1$}. 
\end{cases} 
\end{equation} 
The quantum Lefschetz principle can be rephrased 
in terms of the Laplace transformation: 
\begin{lemma} 
\label{lem:Laplace}
We have 
\[ 
J_Y(t=u^{a/(r-a)}) = \frac{e^{-c_0t}}{\Gamma(1+ah) u} 
\int_0^\infty  i^* J_X(q^{a/r}) e^{-q/u} dq. 
\]
\end{lemma}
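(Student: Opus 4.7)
The proof is a direct calculation: expand $i^*J_X(q^{a/r})$ as a series in $q$, apply the Gamma integral term by term, and match the result against the quantum Lefschetz expression \eqref{eq:qLefschetz_J} for $J_Y(t)$ after the substitution $u = t^{(r-a)/a}$.

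More concretely, starting from the series $J_X(q) = e^{rh \log q}\sum_{n=0}^\infty J_{rn} q^{rn}$ and substituting $q \mapsto q^{a/r}$, I obtain
\[
i^*J_X(q^{a/r}) = e^{ah \log q}\sum_{n=0}^\infty (i^*J_{rn}) q^{an} = \sum_{n=0}^\infty (i^*J_{rn}) q^{ah+an},
\]
where $q^{ah}$ is understood as the formal exponential $e^{ah \log q}$. I would then integrate against $e^{-q/u}$ term by term using the standard Gamma integral $\int_0^\infty q^s e^{-q/u}\, dq = u^{s+1}\Gamma(s+1)$ (applied formally in the nilpotent variable $ah$); this yields
\[
\int_0^\infty i^*J_X(q^{a/r}) e^{-q/u}\, dq = u\cdot u^{ah}\sum_{n=0}^\infty (i^*J_{rn}) u^{an}\,\Gamma(1+ah+an).
\]

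The next step is to invoke the Pochhammer identity
\[
\Gamma(1+ah+an) = \Gamma(1+ah)\cdot (ah+1)(ah+2)\cdots(ah+an),
\]
which converts the Gamma factor into precisely the hypergeometric modification appearing in the quantum Lefschetz formula. Dividing by $\Gamma(1+ah)\, u$ and using $u = t^{(r-a)/a}$, so that $u^{ah} = e^{(r-a)h\log t}$ and $u^{an} = t^{(r-a)n}$, I obtain
\[
\frac{1}{\Gamma(1+ah)\, u}\int_0^\infty i^*J_X(q^{a/r}) e^{-q/u}\, dq = e^{(r-a)h\log t}\sum_{n=0}^\infty (ah+1)\cdots(ah+an)(i^*J_{rn}) t^{(r-a)n}.
\]
Multiplying by $e^{-c_0 t}$ gives exactly $J_Y(t)$ by \eqref{eq:qLefschetz_J}, which completes the proof.

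There is no genuine obstacle here; the verification is routine. The only point requiring a small amount of care is the formal interpretation of the factors $q^{ah}$ and $u^{ah}$ as power series in the nilpotent cohomology class $h$, so that the Gamma integral makes sense componentwise and the identification $u^{ah} = e^{(r-a)h\log t}$ follows from $\log u = \tfrac{r-a}{a}\log t$. If one prefers, one can instead verify the identity coefficient-by-coefficient by expanding both sides as power series in $t$, which bypasses any convergence question and reduces the whole statement to the Pochhammer identity above.
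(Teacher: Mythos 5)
Your calculation is correct and is the natural derivation: the paper states Lemma \ref{lem:Laplace} without proof (the verification is left to the reader, with only a remark addressing convergence), and what it implicitly intends is exactly the term-by-term Gamma-integral plus Pochhammer identity that you spell out, which converts $\Gamma(1+ah+an)/\Gamma(1+ah)$ into the factors $(ah+1)\cdots(ah+an)$ of the quantum Lefschetz formula \eqref{eq:qLefschetz_J}. Your closing remark about verifying the identity coefficient-by-coefficient in $t$ is also the right way to sidestep interchanging sum and integral; the paper's own remark handles the analytic convergence of the Laplace integral separately, for small $u>0$, via the exponential asymptotics of $J_X$.
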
 
\begin{remark} 
The Laplace transformation in the above lemma converges 
for sufficiently small $u>0$ 
because of the exponential asymptotics as $t\to +\infty$ 
in Proposition \ref{prop:J_asymp} and the growth estimate 
$\|J_X(t)\| \le C |\log t|^{\dim X}$ as $t\to +0$. 
\end{remark} 

Suppose that $X$ satisfies Gamma Conjecture $I$. 
Recall from Proposition \ref{prop:J_asymp} that we have 
the following limit formula for $J_X$: 
\begin{equation} 
\label{eq:J_X_asymp}
\Gg_X \propto \lim_{t\to +\infty} t^{\frac{\dim X}{2}} e^{-T_X t} J_X(t). 
\end{equation} 
Here  $T_X$ is the number $T$ in \eqref{eq:T} for $X$. 
Using the stationary phase approximation in Lemma \ref{lem:Laplace}, 
we obtain the following result. 

\begin{theorem} 
Suppose that the $J$-function of $X$ satisfies 
the limit formula \eqref{eq:J_X_asymp}
and let $Y$ be a Fano hypersurface in the linear system $\lvert-(a/r) K_X\rvert$, 
where $r$ is the Fano index of $X$ and $0<a<r$. 
Then the $J$-function of $Y$ satisfies the limit formula: 
\[
\Gg_Y \propto \lim_{t\to + \infty} 
t^{\frac{\dim Y}{2}}e^{-(T_0 - c_0)t}  J_Y(t) 
\]
where $c_0$ is given in \eqref{eq:c0_general_hypersurface} 
and the positive number $T_0>0$ is determined by the relation: 
\[
\left(\frac{T_0}{r-a}\right)^{r-a} = a^a \left( \frac{T_X}{r}\right)^r. 
\]
In particular, if $Y$ satisfies Property $\O$ (Definition \ref{def:propertyO}), 
then $Y$ satisfies Gamma conjecture I by Corollary \ref{cor:limitGamma}. 
\end{theorem}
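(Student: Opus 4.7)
The plan is to apply Laplace's method (method of stationary phase) directly to the integral representation of Lemma~\ref{lem:Laplace}, substituting the hypothesized leading asymptotic of $J_X$ from \eqref{eq:J_X_asymp}.

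First, set $t = u^{a/(r-a)}$ and insert the ansatz $i^*J_X(s) \sim C\, s^{-\dim X/2} e^{T_X s}\, i^*\Gg_X$ (with $s = q^{a/r}$) into
\[
J_Y(t) = \frac{e^{-c_0 t}}{\Gamma(1+ah)\, u} \int_0^\infty i^* J_X(q^{a/r})\, e^{-q/u}\, dq.
\]
Up to a cohomological factor of $i^*\Gg_X$, the problem reduces to the asymptotics, as $u \to +\infty$, of the scalar integral $I(u) = \int_0^\infty q^{-a\dim X/(2r)} \exp\!\bigl(T_X q^{a/r} - q/u\bigr)\, dq$. The phase $S(q) = T_X q^{a/r} - q/u$ has a unique critical point on $(0,\infty)$ at $q_* = (aT_X u/r)^{r/(r-a)}$; this is a non-degenerate maximum with $S''(q_*) = -A\, a(r-a)/r^2$ where $A := T_X q_*^{a/r}$.

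Second, a direct computation gives the critical value
\[
S(q_*) = \tfrac{r-a}{r}\, T_X q_*^{a/r} = (r-a)\, a^{a/(r-a)} (T_X/r)^{r/(r-a)} \cdot t = T_0\, t,
\]
which is precisely the defining relation for $T_0$. Laplace's method then yields
\[
I(u) \sim q_*^{1 - a\dim X/(2r)}\, e^{T_0 t}\, r\sqrt{2\pi/\bigl(A\, a(r-a)\bigr)}.
\]
A short arithmetic collects the powers of $u$: from $q_*^{1-a\dim X/(2r)} \sim u^{(2r - a\dim X)/(2(r-a))}$, from $A^{-1/2} \sim u^{-a/(2(r-a))}$, and from the explicit prefactor $1/u$ in Lemma~\ref{lem:Laplace}. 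Using $\dim Y = \dim X - 1$, the total power of $u$ simplifies to $u^{-a\dim Y /(2(r-a))} = t^{-\dim Y/2}$, the correct Fano weight.

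Third, the cohomological coefficient is $i^*\Gg_X/\Gamma(1+ah)$, which equals $\Gg_Y$ by adjunction: since $Y \in |ah|$, the normal bundle of $Y \subset X$ is $i^*\O_X(ah)$, and the sequence $0 \to TY \to i^*TX \to i^*\O_X(ah) \to 0$ gives $i^*\Gg_X = \Gg_Y \cdot \Gamma(1 + ah|_Y)$. Assembling these factors produces
\[
J_Y(t) \sim \text{const} \cdot t^{-\dim Y/2}\, e^{(T_0 - c_0) t}\, \Gg_Y,
\]
which is the asserted limit formula; the conclusion for Gamma Conjecture~I then follows from Corollary~\ref{cor:limitGamma} once Property~$\O$ is assumed for $Y$.

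The main obstacle will be the rigorous justification of Laplace's method starting from only the \emph{limit} formula~\eqref{eq:J_X_asymp} rather than a full asymptotic expansion. The standard remedy is to split the integration range at $|q - q_*| \le q_*^{1/2 + \epsilon}$ for a small $\epsilon > 0$: on the inner region, the hypothesis provides $i^*J_X(q^{a/r}) = (1+o(1)) C q^{-a\dim X/(2r)} e^{T_X q^{a/r}} i^*\Gg_X$ uniformly, so the Gaussian approximation applies verbatim; on the outer region, one combines the Gaussian decay $e^{S(q) - S(q_*)} = O(e^{-c q_*^{2\epsilon}})$ with the a~priori bounds $\|J_X(s)\| = O(|\log s|^{\dim X})$ near $s = 0$ (from regular singularity at $z=\infty$) and a globally valid polynomially-weighted upper bound of the form $\|J_X(s)\| = O\bigl(s^{-\dim X/2}(1 + s^N) e^{T_X s}\bigr)$ to show that the complement contributes $o(1)$ relative to the leading term. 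The convergence of the full Laplace transform, already guaranteed by Lemma~\ref{lem:Laplace}, supplies the needed integrability.
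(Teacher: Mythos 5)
Your proposal takes essentially the same route as the paper: apply Lemma~\ref{lem:Laplace}, insert the hypothesized leading asymptotics of $J_X$ from \eqref{eq:J_X_asymp}, and run Laplace's method; your algebra identifying the critical value $S(q_*)=T_0 t$, the power $t^{-\dim Y/2}$, and the adjunction identity $\Gg_Y = i^*\Gg_X/\Gamma(1+ah)$ are all correct. The only meaningful difference is the parametrization of the Laplace argument. The paper first rescales $q\mapsto u^{r/(r-a)}q$, which fixes the critical point at the $t$-independent location $q_0=(aT_X/r)^{r/(r-a)}$ and pulls the large parameter $t$ out as a prefactor of a fixed phase $\theta(q)=q-T_Xq^{a/r}+T_0$, and then substitutes $q=q_0e^{x/\sqrt t}$ so that dominated convergence applies directly. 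You keep $q$ at its original scale, so the saddle $q_*\sim u^{r/(r-a)}$ moves to infinity as $u\to\infty$; this is workable but needs more care with the scales, and it is precisely there that your rigor paragraph goes astray.

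Concretely, your proposed cut $|q-q_*|\le q_*^{1/2+\epsilon}$ is at the wrong scale. One has $S''(q_*) = -\tfrac{a(r-a)}{r^2}T_Xq_*^{a/r-2}$ (your displayed $S''(q_*)$ is missing the factor $q_*^{-2}$, though your final formula for $I(u)$ is consistent with the correct value), so the Gaussian width is $|S''(q_*)|^{-1/2}\sim q_*^{1-a/(2r)}$, which for $a<r$ grows strictly faster than $q_*^{1/2}$. Consequently your inner region captures a vanishing fraction of the Gaussian mass, and at its boundary $|S''(q_*)|\,q_*^{1+2\epsilon}\sim q_*^{a/r-1+2\epsilon}\to 0$, so there is no Gaussian decay; the claimed outer estimate $O(e^{-cq_*^{2\epsilon}})$ therefore fails. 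The cut should be placed at $|q-q_*|\le q_*^{1-a/(2r)+\epsilon}$ (for small $\epsilon$): then the boundary decay is $\exp(-cq_*^{2\epsilon})$ and the cubic Taylor correction $|S'''(q_*)|\,|q-q_*|^3\lesssim q_*^{-a/(2r)+3\epsilon}$ is negligible. With that fix your outline goes through; alternatively, the paper's preliminary rescaling avoids this bookkeeping entirely by turning the problem into a standard Laplace integral with a fixed phase and explicit large parameter.
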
 
\begin{proof} 
We write $n = \dim X$ and $T= T_X$ for simplicity. 
We set $\tJ(t) :=  t^{\frac{n}{2}}e^{-T t} i^*J_X(t)$. 
The limit formula \eqref{eq:J_X_asymp} 
gives $\lim_{t\to +\infty} \tJ(t) = C_1 i^* \Gg_X$ for some $C_1 \neq 0$. 
By Lemma \ref{lem:Laplace}, we have 
\begin{align*} 
t^{\frac{n-1}{2}}e^{-(T_0-c_0)t}J_Y(t = u^{a/(r-a)}) & = 
\frac{t^{\frac{n-1}{2}}e^{-T_0 t}}{\Gamma(1+ah) u} 
\int_0^\infty  q^{- an/(2r)} e^{T q^{a/r}-q/u} \tJ(q^{a/r}) 
dq \\
& = 
\frac{\sqrt{t}}{\Gamma(1+ah)} 
\int_0^\infty q^{-an/(2r)} 
e^{-(q-T q^{a/r}+T_0)t} \tJ(t q^{a/r})  dq 
\end{align*} 
where in the second line we performed the change of 
variables $q\to u^{r/(r-a)}q$ and used $t = u^{a/(r-a)}$. 
Consider the function $\theta(q)= q-Tq^{a/r}+T_0$ on 
$[0,\infty)$. This function has a unique critical point at 
$q_0 := (\frac{a}{r} T)^{\frac{r}{r-a}}$ and 
attains a global minimum at $q=q_0$; we have $\theta(q_0) = 0$ 
by the definition of $T_0$. 
By the stationary phase approximation, the $t\to +\infty$ 
asymptotics of this integral is determined by the 
behaviour of the integrand near $q=q_0$. 
To establish the asymptotics rigorously, we divide the interval $[0,\infty)$ 
of integration 
into $[0,q_0/2]$ and $[q_0/2,\infty)$. 
We first estimate the integral over $[0,q_0/2]$. 
\begin{multline}
\label{eq:the_first_half}
\left \| 
\sqrt{t} \int_0^{q_0/2} 
q^{-an/(2r)} 
e^{-\theta(q) t} \tJ(t q^{a/r})  dq 
\right \|
\le  e^{- t \theta(q_0/2)} \sqrt{t} 
\int_0^{q_0/2} q^{-an/(2r)} 
\|\tJ(tq^{a/r})\| dq \\ 
\le e^{- t \theta(q_0/2)} t^{\frac{n+1}{2}- \frac{r}{a}} 
\frac{r}{a} 
\int_0^{t (q_0/2)^{a/r}} 
y^{-n/2} \|\tJ(y)\| y^{(r/a)-1} dy.   
\end{multline} 
where we set $y = t q^{a/r}$. 
Note that $y^{-n/2}\|\tJ(y)\| y^{(r/a)-1}$ is integrable 
near $y=0$ (by the definition of $\tJ$) 
and is of polynomial growth as $y\to +\infty$. 
Therefore the integral 
\[
\int_0^{t (q_0/2)^{a/r}} 
y^{-n/2} \|\tJ(y)\| y^{(r/a)-1} dy.   
\]
is of polynomial growth 
as $t \to +\infty$. Since $\theta(q_0/2)>0$, the integral 
\eqref{eq:the_first_half} 
over $[0,q_0/2]$ goes to zero (exponentially) as $t\to +\infty$. 
Next we consider the integral over $[q_0/2,\infty)$. 
By the change of variables $q=q_0 e^{x/\sqrt{t}}$, 
the integral over $[q_2/2,\infty)$ can be written as 
\begin{equation} 
\label{eq:apply_DCT}
\frac{q_0^{1-\frac{an}{2r}}}{\Gamma(1+ah)} 
\int_{-\sqrt{t} \log 2}^\infty 
e^{(1-\frac{an}{2r}) x/\sqrt{t}} 
e^{- t\theta(q_0 e^{x/\sqrt{t}})} 
\tJ\left(t q_0^{a/r} e^{\frac{a}{r} x/\sqrt{t}} \right) dx 
\end{equation} 
Since we have an expansion of the form 
$\theta(q_0 e^{y}) = a_2 y^2 + \sum_{n=3}^\infty
a_n y^n$, we have for a fixed $x\in \R$,  
\[
e^{(1-\frac{an}{2r})x/\sqrt{t}} \to 1, \qquad 
e^{-t\theta(q_0 e^{x/\sqrt{t}})} \to e^{-a_2 x^2}, 
\qquad 
\tJ\left(t q_0^{a/r} e^{\frac{a}{r} x/\sqrt{t}} \right) 
\to C_1 i^*\Gg_X 
\]
as $t\to +\infty$. On the other hand, since $\theta(q_0 e^y)$ grows 
exponentially as $y\to +\infty$, we have an estimate 
of the form 
\[
\theta(q_0 e^y) \ge C_2 y^2 
\]
on $y\in [-\log 2, \infty)$ for some $C_2>0$. 
Therefore, when $t\ge 1$, the integrand of \eqref{eq:apply_DCT} 
can be estimated as 
\[
\left\|e^{(1-\frac{an}{2r}) x/\sqrt{t}} 
e^{- t\theta(q_0 e^{x/\sqrt{t}})} 
\tJ\left(t q_0^{a/r} e^{\frac{a}{r} x/\sqrt{t}} \right) 
\right\| 
\le C_3 e^{|1-\frac{an}{2r}| |x|} e^{-C_2 x^2} 
\]
for all $x\in [-\sqrt{t} \log 2, \infty)$ for some $C_3>0$. 
Thus the integrand is uniformly bounded by an integrable function 
and we can apply Lebesgue's dominated convergence theorem to 
\eqref{eq:apply_DCT} to see that the limit of \eqref{eq:apply_DCT} 
as $t\to +\infty$ is proportional to $i^*\Gg_X/\Gamma(1+ah)$. 
The conclusion follows by noting that 
\[
\Gg_Y = \frac{i^*\Gg_X}{\Gamma(1+ah)}. 
\]
\end{proof} 

It is natural to ask the following questions: 
\begin{problem} 
Check that $T_0-c_0$ is the number $T$ \eqref{eq:T} for the hypersurface $Y$. 
Also prove that $Y$ satisfies Property $\O$ (assuming that $X$ 
satisfies Property $\O$). 
\end{problem} 

\begin{remark}[\cite{Galkin:Apery}] 
It is easy to see that 
Ap\'ery limits \eqref{eq:Apery_limit} (or Gamma conjecture I') are compatible 
with quantum Lefschetz. 
Suppose that $X$ is a Fano manifold of index $r$ satisfying Gamma conjecture I' 
and let $Y \in \lvert-(a/r)K_X\rvert$ be a Fano hypersurface 
of index $r-a>1$. 
Then for any $\alpha \in H_\ldot(Y)$ with $c_1(Y) \cap \alpha =0$, 
we have the limit formula: 
\[
\lim_{n\to \infty} \frac{\Ang{i_*\alpha, J_{X,rn}}}
{\Ang{[\pt],J_{X,rn}}}
 = \langle i_*\alpha,\Gg_X  \rangle  = \langle \alpha, \Gg_Y\rangle 
\]
where $J_{X,n}$ is the Taylor coefficients of the $J$-function of $X$ 
(as in \eqref{eq:J_expansion}) and we used $h\cap i_*\alpha =0$ 
in the second equality. Since the index $r-a$ is greater than one, 
the quantum Lefschetz \eqref{eq:qLefschetz_J} gives:  
\[
\Ang{\alpha, J_{Y,(r-a)n}} = (an)! \Ang{i_*\alpha, J_{X,rn}}, 
\quad 
\Ang{[\pt], J_{Y,(r-a)n}} = (an)! \Ang{[\pt],J_{X,rn}} 
\]
and thus $Y$ also satisfies the Gamma conjecture I'.  
\end{remark} 

\section{Grassmannians} 
\label{sec:Grassmannian}
In this section, we prove Gamma conjecture I' 
(Conjecture \ref{conj:I_dash}) for Grassmannians  
$\Gr(r,n)$ using the Hori--Vafa mirror \cite{Hori-Vafa} and abelian/non-abelian 
correspondence \cite{BCFK}. 
The discussion in this section extends the proof of Dubrovin conjecture 
for Grassmannians by Ueda \cite{Ue05}. 
In the course of the proof, we obtain a formula for quantum cohomology 
central charges of $\Gr(r,n)$ in terms of mirror oscillatory integrals. 
We note that Gamma conjecture I' for $\Gr(2,n)$ was proved 
by Golyshev \cite{Golyshev08a}. 
Gamma conjectures I and II were proved 
for general $\Gr(r,n)$ in \cite{GGI:gammagrass} by a different 
(but closely related) method 
based on the quantum Satake principle \cite{GMa}. 

\subsection{Abelian quotient and non-abelian quotient}
Let $\G=\Gr(r,n)$ denote the Grassmann variety of $r$-dimensional 
subspaces in $\C^n$ 
and let $\P = \P^{n-1} \times \cdots \times \P^{n-1}$ ($r$ times) 
denote the product of $r$ copies of the projective space $\P^{n-1}$. 
We relate these two spaces in the framework of 
Martin \cite{Martin00}: 
$\G$ and $\P$ arise as 
non-abelian and abelian quotients of the same vector space 
$\Hom(\C^r,\C^n)$: 
\[
\G = \Hom(\C^r,\C^n) /\!/ GL(r), \quad 
\P = \Hom(\C^r,\C^n) /\!/ (\C^\times)^r. 
\]  
We have a rational map 
\[
\P \dasharrow \G 
\]
sending a collection of lines $l_1,\dots,l_r$ to 
the subspace spanned by $l_1,\dots,l_r$. 
We can also relate $\P$ and $\G$ by the following diagram 
\cite{Martin00}: 
\begin{equation}
\label{eq:diagPG} 
\begin{CD} 
\F @>{p}>> \G \\ 
@V{\iota}VV   @. \\ 
\P 
\end{CD}
\end{equation} 
where $\F : = \operatorname{Fl}(1,2,\dots,r,n)$ is the 
partial flag variety, $p \colon \F \to \G$ is the natural projection 
and $\iota$ is a real-analytic embedding which sends  
a flag $0\subset V_1 \subset V_2 \subset 
\cdots \subset V_r \subset \C^n$ to the lines 
$L_1,\dots,L_r$ such that $L_i$ is the orthogonal 
complement of $V_{i-1}$ in $V_{i}$. 
Here we need to choose a Hermitian metric on $\C^n$ 
for the definition of $\iota$. 
The diagram \eqref{eq:diagPG} naturally 
comes from the following description 
of $\G$ and $\P$ as symplectic reductions 
of $\Hom(\C^r,\C^n)$: 
\begin{equation} 
\label{eq:symp_reduction}
\G \cong \mu_G^{-1}(\eta) / G, \quad 
\F \cong \mu_G^{-1}(\eta) / T, \quad 
\P \cong \mu_T^{-1}(\eta_0) /T.   
\end{equation}
Here $\mu_G$ and $\mu_T$ are the moment maps 
of $G=U(r)$ and $T=(S^1)^r$-actions on 
$\Hom(\C^r,\C^n)$ respectively 
and $\eta\in \Lie(G)^\star$ and $\eta_0\in \Lie(T)^\star$ 
are non-zero central elements (i.e.~scalar multiples 
of the identify matrix) such that $\pi(\eta) = \eta_0$ 
(where $\pi \colon \Lie(G)^\star \to \Lie(T)^\star$ is the natural 
projection): 
\begin{align*} 
\mu_G (A) = A^\dagger A, \quad 
\mu_T (A) = \left((A^\dagger A)_{i,i}\right)_{i=1}^r, \quad 
\xymatrix{
\Hom(\C^r,\C^n) \ar[r]^{\phantom{AB}\mu_G} \ar[rd]_{\mu_T} & 
\Lie(G)^\star \ar[d]^{\pi} \\ 
& \Lie(T)^\star 
}
\end{align*} 
The map $\iota$ in \eqref{eq:diagPG} is then induced from 
the inclusion $\mu_G^{-1}(\eta) \subset \mu_T^{-1}(\eta_0)$. 

We describe the cohomology groups of $\G$ and $\P$. 
Let $\V_1\subset \V_2\subset \cdots \subset \V_r$ 
denote the tautological bundles (with $\rank \V_i = i$) over $\F$ 
and set 
\[
\L_i := (\V_i / \V_{i-1})^\vee.  
\]
The line bundle $\L_i$ is the pull-back of the line bundle 
$\O(1)$ over the $i$th factor $\cong \P^{n-1}$ of $\P$
under the non-algebraic map $\iota$.  
We denote the corresponding line bundle over $\P$ 
also by $\L_i$, i.e. $\iota^\star \L_i = \L_i$.  
We set 
\[
x_i := c_1(\L_i) \in \text{$H^\udot(\P)$ or $H^\udot(\F)$}. 
\] 
A symmetric polynomial in $x_1,\dots,x_r$ can be written 
in terms of Chern classes of $\V_r$ and thus makes sense 
as a cohomology class on $\G$. 
The cohomology rings of $\P$, $\G$ and $\F$ are 
described in terms of $x_i$ as follows: 
\begin{align*} 
H^\udot(\P) & \cong \C[x_1,\dots,x_r] 
\Big/\langle x_1^{n},\dots, x_r^n \rangle, \\
H^\udot(\G) & \cong \C[x_1,\dots,x_r]^{\frS_r}
\Big/ \langle h_{n-r+1},\dots,h_{n} \rangle, \\ 
H^\udot(\F) & \cong \C[x_1,\dots,x_r] 
\Big/ \langle h_{n-r+1},\dots, h_n \rangle. 
\end{align*} 
Here $h_i=c_i(\C^n/\V_r)$ is the $i$th complete symmetric 
function of $x_1,\dots,x_r$. 
An additive basis of $H^\udot(\G)$ is given by 
the Schubert classes: 
\begin{equation} 
\label{eq:Schubert_class}
\sigma_\mu := \frac{\det\left(x_i^{\mu_j+r-j}\right)_{1\le i,j\le r}}
{\det\left(x_i^{r-j}\right)_{1\le i,j\le r}}
\end{equation} 
where $\mu$ ranges over partitions such that 
$n-r \ge \mu_1 \ge \mu_2 \ge \cdots \ge 
\mu_r\ge 0$. 
This is the Poincar\'e dual of the Schubert cycle 
\[
\Omega_\mu = \{ V \in \Gr(r,n) : \dim( V \cap \C^{n-r+i-\mu_i}) \ge i, 
\ 1\le i\le r\} 
\]
and $\deg \sigma_\mu = 2 |\mu| = 2 \sum_{i=1}^r \mu_i$ 
(degree as a cohomology class), see 
\cite[\S 6, Chapter 1]{Griffiths-Harris}. 

Let $J_{\P^{n-1}}(t)$ be the $J$-function 
(see \eqref{eq:J-function}, \eqref{eq:J-function_P}) 
of $\P^{n-1}$. 
We define the multivariable $J$-function of $\P$ by 
\[
J_\P(t_1,\dots,t_r) = J_{\P^{n-1}}(t_1) \otimes 
J_{\P^{n-1}}(t_2) \otimes \cdots \otimes J_{\P^{n-1}}(t_r).  
\]
This takes values in $H^\udot(\P) \cong H^\udot(\P^{n-1})^{\otimes r}$. 
Bertram--Ciocan-Fontanine--Kim \cite{BCFK} proved the 
following \emph{abelian/non-abelian correspondence} 
between the $J$-functions of $\G$ and $\P$. 
\begin{theorem}[\cite{BCFK}]
\label{thm:BCFK}
The $J$-function $J_\G(t)$ of $\G$ is given by 
\[
p^\star J_\G(t) 
= e^{- \sigma_1 \pi \iu (r-1) } 
\left[\frac{\left( 
\prod_{1\le i<j\le r} (\theta_i - \theta_j) \right) 
\iota^\star J_{\P}(t_1,\dots,t_r)}{n^{\binom{n}{r}}\Delta} 
\right]_{t_1 = \cdots = t_r = \xi t} 
\] 
where $\Delta := \prod_{i<j} (x_i - x_j)$, 
$\theta_i := t_i \parfrac{}{t_i}$, $\xi := e^{\pi\iu(r-1)/n}$ and 
$\sigma_1 := x_1 + \dots + x_r$.  
\end{theorem}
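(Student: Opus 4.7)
The plan is to verify the identity as an equality of $H^\udot(\F)$-valued formal series in $t$, exploiting that $p\colon\F\to\G$ is a tower of projective bundles so that $p^\star$ is injective on cohomology. Both sides will then be characterised as the unique Frobenius-type solution of the quantum $D$-module of $\G$ with leading term $e^{c_1(\G)\log t}$.

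I would start by writing out $\iota^\star J_\P(t_1,\dots,t_r)$ explicitly using \eqref{eq:J-function_P} and the Künneth decomposition:
\[
\iota^\star J_\P(t_1,\dots,t_r)=\prod_{i=1}^r\sum_{d_i\ge 0}\frac{t_i^{n(x_i+d_i)}}{\prod_{k=1}^{d_i}(x_i+k)^n}.
\]
Applying $\prod_{i<j}(\theta_i-\theta_j)$ multiplies the summand indexed by $(d_1,\dots,d_r)$ by $n^{r(r-1)/2}\prod_{i<j}\bigl((x_i+d_i)-(x_j+d_j)\bigr)$, and setting $t_1=\cdots=t_r=\xi t$ antisymmetrises the expression under simultaneous permutations of the pairs $(x_i,d_i)$; only tuples for which the shifted exponents $x_i+d_i$ are pairwise distinct survive. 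Division by $\Delta$ and the Weyl denominator identity \eqref{eq:Schubert_class} then express the coefficient of each power of $t$ in the Schubert basis $\{\sigma_\mu\}$ of $H^\udot(\G)\subset H^\udot(\F)$.

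Next I would verify that the series so obtained solves the small quantum differential equation of $\G$ in $t$. The multivariable $J_\P$ is annihilated by the commuting Picard--Fuchs operators of the individual factors $\P^{n-1}$; antisymmetrising and then restricting to the diagonal converts this system into the hypergeometric operator describing the small quantum $D$-module of $\G$, which is the infinitesimal form of the abelian/non-abelian correspondence. The remaining scalar constants are fixed by matching leading behaviour as $t\to 0$: the phase $\xi=e^{\pi\iu(r-1)/n}$ and the twist $e^{-\sigma_1\pi\iu(r-1)}$ compensate for the $(r-1)$-fold shift of $\sigma_1$ in the exponent of $t$ produced by the differential operator (each $\theta_i$ produces one copy of $\sigma_1$ upon specialisation), while the scalar $n^{\binom{n}{r}}$ is determined by normalising the constant term so that it matches $1\in H^\udot(\G)$.

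The principal obstacle is the preceding step: showing rigorously that antisymmetrisation together with diagonal specialisation sends the Picard--Fuchs system of $\P$ to the quantum $D$-module of $\G$, since $(H^\udot(\G),\star_0)$ is not the $\frS_r$-invariant subring of $(H^\udot(\P),\star_0)$. Two routes are available: in the original approach of Bertram--Ciocan-Fontanine--Kim one computes on the hyperquot scheme compactifying the space of maps $\P^1\to\F$ and applies virtual torus localisation together with Martin's comparison \cite{Martin00} of intersection numbers on $\G$ and $\P$; alternatively, one may deduce the result from the quantum Satake principle of Golyshev--Manivel \cite{GMa}, which presents $(H^\udot(\G),\star_0)$ as an $r$th quantum exterior power of $(H^\udot(\P^{n-1}),\star_0)$ and realises $J_\G$ as a determinantal combination of copies of $J_{\P^{n-1}}$.
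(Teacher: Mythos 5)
The paper does not prove this theorem at all: Theorem~\ref{thm:BCFK} is cited verbatim from \cite{BCFK}, and the only ``proof'' in the paper is that citation. So there is no argument here to compare against. Your outline correctly identifies the structure of a proof and the two standard routes (the hyperquot/virtual-localisation computation of Bertram--Ciocan-Fontanine--Kim combined with Martin's integration formula, and the quantum Satake route of Golyshev--Manivel), but it does not carry either of them out --- the step you label as ``the principal obstacle'' is in fact the entire content of the theorem, since the preliminary manipulations (expand $\iota^\star J_\P$, apply $\prod_{i<j}(\theta_i-\theta_j)$, restrict to the diagonal, divide by $\Delta$) are formal, while what has to be proved is that the resulting $H^\udot(\G)$-valued series is annihilated by the full quantum $D$-module of $\G$, which in turn requires knowing the small quantum cohomology ring of $\G$ (Bertram's presentation via the hyperquot scheme) or a quantum-level Satake statement. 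Deferring precisely that step to a citation means the proposal is a plan, not a proof.

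One concrete point worth flagging: you write that the scalar $n^{\binom{n}{r}}$ is ``determined by normalising the constant term.'' If you actually carry out that normalisation --- the $d=0$ term of $\iota^\star J_\P$ is $\prod_i t_i^{nx_i}$, to which $\prod_{i<j}(\theta_i-\theta_j)$ contributes a factor $\prod_{i<j}(nx_i-nx_j)=n^{\binom{r}{2}}\Delta$ --- you find the normalising constant is $n^{\binom{r}{2}}$, not $n^{\binom{n}{r}}$. This is in fact what the paper itself uses when it rewrites the theorem in the proof of Proposition~\ref{prop:cc_ANA}, so the exponent $\binom{n}{r}$ in the statement of Theorem~\ref{thm:BCFK} appears to be a typo. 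Running the check you proposed would have surfaced this; as written, you accept the constant uncritically.
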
 

\subsection{Preliminary lemmas} 
We discuss elementary topological properties of the maps 
in \eqref{eq:diagPG}. Martin \cite{Martin00} has shown 
similar results for general abelian/non-abelian quotients.   

\begin{lemma}
\label{lem:tangentbundles} 
Let $\N_\iota \to \F$ denote the normal bundle of $\iota$ 
and let $\T_p = \Ker(dp) \subset T\F$ denote the relative tangent 
bundle of $p$. Then the complex structure $I$ on $\P$  
induces an isomorphism: 
\[
I \colon \T_p \cong \N_\iota. 
\]
In particular, we have an isomorphism 
\[
\iota^\star T \P  \cong p^\star T\G \oplus 
\left(\T_p \otimes_\R \C, \iu\right)  
\]
of topological complex vector bundles, 
where the complex structure $\iu$ on $\T_p \otimes_\R \C$ 
is induced from that on the $\C$-factor. 
\end{lemma}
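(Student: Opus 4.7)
The plan is to work in the symplectic reduction model \eqref{eq:symp_reduction} and perform every computation fiberwise over a point $A\in\mu_G^{-1}(\eta)\subset M:=\Hom(\C^r,\C^n)$, where $M$ carries its standard flat Kähler structure $(g,I,\omega)$. Write $\mathfrak{g}=\Lie(G)$, $\mathfrak{t}=\Lie(T)$, and let $\mathfrak{m}\subset\mathfrak{g}$ be the orthogonal complement of $\mathfrak{t}$ with respect to an $\operatorname{Ad}$-invariant inner product. The Kähler-quotient identity $(d\mu_G(v))(X)=\omega(X\cdot A,v)=g(I(X\cdot A),v)$ for $X\in\mathfrak{g}$ immediately gives
\[
\Ker(d\mu_G)_A=(I\mathfrak{g}\cdot A)^{\perp_g},\qquad \Ker(d\mu_T)_A=(I\mathfrak{t}\cdot A)^{\perp_g},
\]
and hence the orthogonal decomposition
\[
T_A M=\mathfrak{t}\cdot A\oplus I\mathfrak{t}\cdot A\oplus \mathfrak{m}\cdot A\oplus I\mathfrak{m}\cdot A\oplus H,
\]
where $H:=\Ker(d\mu_G)_A\cap(\mathfrak{g}\cdot A)^{\perp_g}$ is the $I$-stable horizontal space canonically identified with $T_{[A]}\G$ as a complex vector space.

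From this decomposition, I would read off all four relevant bundles. Since $T_{[A]}\F=\Ker(d\mu_G)_A/\mathfrak{t}\cdot A$ and $\iota^\star T\P|_{[A]}=\Ker(d\mu_T)_A/\mathfrak{t}\cdot A$, the real transversals are $T_{[A]}\F\cong \mathfrak{m}\cdot A\oplus H$ and $\iota^\star T\P|_{[A]}\cong \mathfrak{m}\cdot A\oplus I\mathfrak{m}\cdot A\oplus H$. The projection $p$ kills the $H$-direction, giving $\T_p|_{[A]}=\mathfrak{m}\cdot A$, while the complement of $T_{[A]}\F$ in $\iota^\star T\P|_{[A]}$ gives $\N_\iota|_{[A]}=I\mathfrak{m}\cdot A$. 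The ambient complex structure $I$ is then a real linear isomorphism $\mathfrak{m}\cdot A\to I\mathfrak{m}\cdot A$, and since all identifications are $T$-equivariant they descend to an isomorphism of bundles $I\colon \T_p\cong\N_\iota$ over $\F$, which is the first claim.

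For the second claim, the crucial observation is that the transversal $\mathfrak{m}\cdot A\oplus I\mathfrak{m}\cdot A\oplus H$ to $\mathfrak{t}\cdot A$ inside $\Ker(d\mu_T)_A$ is $I$-invariant (the subspace $H$ is $I$-invariant, and $I$ swaps $\mathfrak{m}\cdot A$ and $I\mathfrak{m}\cdot A$ up to sign), so the complex structure on $\iota^\star T\P|_{[A]}$ coincides with the restriction of the ambient $I$ on this model. One therefore has the $I$-invariant splitting of complex vector spaces
\[
\iota^\star T\P|_{[A]}=H\oplus (\mathfrak{m}\cdot A\oplus I\mathfrak{m}\cdot A),
\]
with first summand $=p^\star T\G|_{[A]}$. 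To match the second summand with $(\T_p\otimes_\R\C,\iu)$, I would use the real-linear bijection $\alpha\colon \mathfrak{m}\cdot A\otimes_\R\C\to\mathfrak{m}\cdot A\oplus I\mathfrak{m}\cdot A$, $v\otimes 1+w\otimes\iu\mapsto v+Iw$, and verify that it intertwines the two complex structures: $\iu(v\otimes 1+w\otimes\iu)=-w\otimes 1+v\otimes\iu\mapsto -w+Iv=I(v+Iw)$. Again the construction is $T$-equivariant and glues to a bundle isomorphism over $\F$.

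The main point to keep straight is that the complex structure on $\F$ itself is \emph{not} the restriction of the ambient $I$ to the transversal $\mathfrak{m}\cdot A\oplus H$ of $T_{[A]}\F$ (since $I\mathfrak{m}\cdot A$ escapes this transversal), which is precisely why $\T_p$ enters the lemma only after complexification $\T_p\otimes_\R\C$. On the larger transversal $\mathfrak{m}\cdot A\oplus I\mathfrak{m}\cdot A\oplus H$ modelling $\iota^\star T\P$, however, the ambient $I$ \emph{does} restrict, which is exactly what makes the factor $(\T_p\otimes_\R\C,\iu)$ appear on the right-hand side of the isomorphism.
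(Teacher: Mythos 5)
Your proof is correct and takes essentially the same route as the paper: working in the flat Kähler model $\Hom(\C^r,\C^n)$ and using the orthogonal decompositions coming from the moment-map identity to read off $\T_p$, $\N_\iota$, and $p^\star T\G$ as explicit summands, with $I$ swapping the first two. The main difference is that you spell out in full how the complex structures match in the second displayed isomorphism (the $I$-invariance of the transversal and the intertwining $\alpha\colon(\T_p\otimes_\R\C,\iu)\cong\mathfrak{m}\cdot A\oplus I\mathfrak{m}\cdot A$), a step the paper leaves implicit under ``and the conclusion follows.''
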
 
\begin{proof} 
Recall that $\P$, $\G$, $\F$ are described as symplectic 
reductions \eqref{eq:symp_reduction}. 
Note that $\rank \N_\iota = \rank \T_p = \dim G -\dim T$. 
Let $\underline{X}$ denote the fundamental vector field 
associated with $X\in \Lie(G)$. 
For $x\in \mu_G^{-1}(\eta)$, it follows from the 
property of the moment map that $I \underline{X}_x$ is 
perpendicular to $T_x \mu_G^{-1}(\eta)$ for $X\in \Lie(G)$. 
Thus we have an orthogonal decomposition: 
\[
T_x \Hom(\C^r,\C^n) = T_x\mu_G^{-1}(\eta) \oplus I T_x(G\cdot x). 
\]
Similarly we have an orthogonal decomposition 
\[
T_x \Hom(\C^r,\C^n) = T_x\mu_T^{-1}(\eta_0) \oplus I T_x(T \cdot x).  
\]
Note that the tangent space $T_{[x]} \P$ at $[x]\in \P=\mu_T^{-1}(\mu_0)/T$ 
is identified with the orthogonal complement 
of $T_x(T\cdot x)$ in $T_x \mu_T^{-1}(\eta_0)$, 
which is preserved by the complex structure $I$. 
Under this identification, the orthogonal complement of $T_x(T\cdot x)$ 
in $T_x (G\cdot x)$ is identified with the fiber $(\T_p)_{[x]}$; 
by the above decompositions, the complex structure $I$ sends 
this to the orthogonal complement of $T_x \mu_G^{-1}(\eta)$ in 
$T_x \mu_T^{-1}(\eta_0)$. This shows 
$I((\T_p)_{[x]}) = (\N_{\iota})_{[x]}$ and the conclusion follows. 
\end{proof} 

\begin{remark} 
\label{rem:complexification} 
Note that as complex vector bundles   
\[
(\T_p \otimes_\R \C, \iu)  \cong \T_p \oplus \ov\T_p 
\]
where $\ov\T_p$ denotes the conjugate of the complex 
vector bundle $\T_p$. 
\end{remark} 

Because $\F, \P$ are compact oriented manifolds,  
the push-forward map $\iota_* \colon H^\udot(\F) \to H^\udot(\P)$ 
is well-defined (even though $\iota$ is not algebraic). 
Here we need to be a little careful about the orientation of 
the normal bundle of $\iota$. 

\begin{lemma}
\label{lem:iota_star}
The push-forward map $\iota_\star \colon H^\udot(\F) \to H^\udot(\P)$ 
is given by 
\[
\iota_\star(f(x)) = f(x) \cup \ov{\Delta}   
\]
for any polynomial $f(x) = f(x_1,\dots,x_r)$. 
Here $\ov{\Delta} = \prod_{i<j} (x_j-x_i) = 
(-1)^{\binom{r}{2}} \Delta$. 
\end{lemma}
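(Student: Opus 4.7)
The plan is to first reduce to computing $\iota_\star(1) \in H^{2\binom{r}{2}}(\P)$ via the projection formula, and then identify this class by combining an $\frS_r$-symmetry constraint with the self-intersection formula. Since $\iota^\star x_i = x_i$ (by construction, $\L_i$ on $\F$ is the pullback of the hyperplane bundle on the $i$th factor of $\P$), the projection formula immediately gives $\iota_\star(f(x)) = f(x) \cup \iota_\star(1)$, so the lemma reduces to showing $\iota_\star(1) = \ov\Delta$.

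For the symmetry step, the subset $\iota(\F) = \{(L_1,\ldots,L_r) : L_i \perp L_j \text{ for } i \ne j\} \subset \P$ is invariant under the $\frS_r$-action on $\P$ permuting the factors, and this action lifts through $\iota$ to a self-diffeomorphism $\tau_w \colon \F \to \F$ (sending the flag with associated orthogonal lines $L_1,\ldots,L_r$ to the flag whose associated lines are $L_{w(1)},\ldots,L_{w(r)}$). For a simple transposition $s = (i,i+1)$, the map $\tau_s$ fixes every $V_j$ with $j \ne i$ and restricts on the fiber $\P(V_{i+1}/V_{i-1}) \cong \mathbb{P}^1$ to the involution $L \mapsto L^\perp$; an explicit chart computation gives $w \mapsto -1/\bar w$, which is antiholomorphic and thus orientation-reversing. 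Hence $\tau_w$ has orientation character $\sgn(w)$, and combined with the equivariance $\sigma_w \circ \iota = \iota \circ \tau_w$ and the identity $(\tau_w)_\star(1) = \sgn(w)$, this yields $(\sigma_w)_\star \iota_\star(1) = \sgn(w) \cdot \iota_\star(1)$. Therefore $\iota_\star(1)$ lies in the sign-isotypic component of $H^\udot(\P) = \C[x_1,\ldots,x_r]/(x_i^n)$ for the $\frS_r$-action, which is the ideal $\Delta \cdot \C[x]^{\frS_r}$; since $\iota_\star(1)$ has degree exactly $2\binom{r}{2}$, we conclude $\iota_\star(1) = c\Delta$ for some scalar $c \in \C$.

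To pin down $c$, I would apply $\iota^\star$ and use the self-intersection formula $\iota^\star\iota_\star(1) = e(\N_\iota)$, where $\N_\iota$ carries the orientation inherited from $T\P/T\F$. Lemma~\ref{lem:tangentbundles} together with Remark~\ref{rem:complexification} identifies $\T_p \oplus \N_\iota$ as an oriented real bundle with $(\T_p \otimes_\R \C, \iu)$; a pointwise comparison shows that the ambient complex orientation on $\T_p \otimes_\R \C$ (with real basis in complex order $e_1, Ie_1, e_2, Ie_2, \ldots$) differs from the product orientation (with real basis $e_1, \ldots, e_{2m}, Ie_1, \ldots, Ie_{2m}$ for $m = \binom{r}{2}$) by the sign of an un-shuffle permutation, which simplifies to $(-1)^{\binom{r}{2}}$. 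Consequently the orientation on $\N_\iota$ inherited from $T\P/T\F$ equals $(-1)^{\binom{r}{2}}$ times the $I$-transported orientation from $\T_p$, so $e(\N_\iota) = (-1)^{\binom{r}{2}} c_{\mathrm{top}}(\T_p)$. Using the standard splitting $\T_p = \bigoplus_{i>j} \L_i^\vee \otimes \L_j$ of the relative tangent bundle with Chern roots $\{x_j - x_i : i > j\}$, one computes $c_{\mathrm{top}}(\T_p) = \Delta$, whence $e(\N_\iota) = \ov\Delta$. Combining this with $\iota^\star(c\Delta) = c\Delta$ and the non-vanishing of $\Delta \in H^\udot(\F)$ (ensured by $p_\star(\Delta) = \chi(G/B) = r!$) forces $c = (-1)^{\binom{r}{2}}$, i.e.~$\iota_\star(1) = \ov\Delta$. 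The main obstacle is this final orientation-tracking step: although Lemma~\ref{lem:tangentbundles} supplies the $K$-theoretic identification for free, the sign $(-1)^{\binom{r}{2}}$ in $e(\N_\iota)$ emerges only after a careful pointwise comparison between the ambient complex orientation on $\T_p \otimes_\R \C$ and the orientation obtained by concatenating the complex orientation of $\T_p$ with its $I$-transport, and this sign is essential for distinguishing $\ov\Delta$ from $\Delta$.
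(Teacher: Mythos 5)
Your proof is correct but takes a genuinely different route from the paper's. The paper's argument goes through the identification of $\iota(\F)\subset\P$ as the transverse zero locus of a section of the complex vector bundle $\bigoplus_{i<j}\ov{\L_i}\otimes\L_j\to\P$ (the section being given by the Hermitian pairings of $L_i$ with $L_j$), so that $\iota_\star(1)$ is directly the Euler class $\prod_{i<j}(x_j-x_i)=\ov\Delta$ of that bundle over $\P$, after noting $\N_\iota\cong\ov\T_p$ via Lemma~\ref{lem:tangentbundles} and Remark~\ref{rem:complexification}. You instead determine $\iota_\star(1)$ indirectly from two pieces of information: the $\frS_r$-equivariance $\sigma_w\circ\iota=\iota\circ\tau_w$ with $\tau_w$ orientation-reversing of character $\sgn(w)$ (forcing $\iota_\star(1)=c\Delta$ by anti-symmetry and degree), and the self-intersection formula $\iota^\star\iota_\star(1)=e(\N_\iota)$ combined with a pointwise riffle-shuffle sign count to get $e(\N_\iota)=(-1)^{\binom{r}{2}}c_{\rm top}(\T_p)=\ov\Delta$. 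Both are sound; your orientation bookkeeping is in effect an alternative derivation of $e(\N_\iota)=c_{\rm top}(\ov\T_p)$, which the paper gets for free from the $K$-theoretic identity $\N_\iota\cong\ov\T_p$. The paper's route is shorter because the zero-locus picture pins down $\iota_\star(1)\in H^\udot(\P)$ directly (the $\frS_r$-symmetry is needed in your argument precisely because $\iota^\star$ is not injective, so $e(\N_\iota)$ alone does not determine $\iota_\star(1)$), whereas your symmetry argument is a nice standalone observation that could be of independent use and requires less structure than producing an explicit defining section.
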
 
\begin{proof}
The normal bundle $\N_\iota$ of $\iota$ is not a complex vector bundle, 
but can be written as the formal difference of complex 
vector bundles: 
\[
\N_\iota = \iota^\star T\P \ominus T\F 
\]
and thus defines an element of the $K$-group $K^0(\P)$ 
of topological complex vector bundles. 
Using Lemma \ref{lem:tangentbundles}, Remark \ref{rem:complexification} 
and a topological isomorphism $T\F \cong \T_p \oplus p^\star T\G$, 
we have: 
\[
\N_\iota = \T_p \otimes \C \ominus \T_p 
=  \ov\T_p.   
\]
On the other hand, we have a topological isomorphism: 
\[
\T_p \cong \bigoplus_{i<j} \Hom(\L_i^\vee,\L_j^\vee).   
\]
Thus $\ov\T_p\to \F$ is isomorphic to the restriction 
of the following complex vector bundle over $\P$:  
\[
\bigoplus_{i<j} \ov{\L_i} \otimes \L_j \longrightarrow \P 
\]
and $\F$ is the zero locus of the section of this bundle  
defined by the given Hermitian metric on $\C^n$. 
Therefore, $\iota_\star (f(x)) = f(x) \cup \ov{\Delta}$. 
\end{proof} 

\begin{lemma}
\label{lem:p_star} 
For a polynomial $f(x) = f(x_1,\dots, x_r)$, we have 
\[
p_\star(f(x)) = \frac{1}{\Delta} 
\left ( \sum_{\sigma \in \frS_r} \sgn(\sigma) 
f(x_{\sigma(1)}, \dots, x_{\sigma(r)}) 
\right). 
\]
Note that the right-hand side is a symmetric polynomial 
in $x_1,\dots,x_r$. 
\end{lemma}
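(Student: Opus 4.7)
The morphism $p \colon \F \to \G$ forgets the intermediate flag data inside $\V_r$, so it is the full flag bundle of the rank-$r$ tautological bundle $\V_r \to \G$, smooth and proper of relative dimension $\binom{r}{2}$. The formula is the classical Weyl/Jacobi integration formula for a full flag bundle. My plan is to check that both sides are $H^\udot(\G)$-linear in $f$, and then to compare them on the staircase $H^\udot(\G)$-module basis of $H^\udot(\F)$ provided by Leray--Hirsch.

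First, I would verify that the right-hand side makes sense as a class on $\G$. Writing $A(f) := \sum_{\sigma \in \frS_r} \sgn(\sigma) f(x_{\sigma(1)},\dots,x_{\sigma(r)})$, the polynomial $A(f)$ is antisymmetric under $\frS_r$, so it vanishes when $x_i = x_j$; hence each pairwise coprime factor $(x_i - x_j)$ divides $A(f)$, so $\Delta$ divides $A(f)$. The quotient $A(f)/\Delta$ is symmetric in $x_1,\dots,x_r$, hence is a polynomial in the elementary symmetric functions, and so lies in the image of $p^\star \colon H^\udot(\G) \to H^\udot(\F)$. Both maps $f \mapsto p_\star(f)$ and $f \mapsto A(f)/\Delta$ are $H^\udot(\G)$-linear: $p_\star$ by the projection formula; and $A$ by the fact that any symmetric polynomial $g$ satisfies $A(gf) = g\cdot A(f)$.

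By Leray--Hirsch for the full flag bundle, $H^\udot(\F)$ is a free $H^\udot(\G)$-module with basis the staircase monomials $\{x^\alpha := x_1^{\alpha_1}\cdots x_r^{\alpha_r} : 0 \le \alpha_i \le r-i\}$. For such $\alpha$ one has $A(x^\alpha) = \det(x_j^{\alpha_i})_{1\le i,j\le r}$, which vanishes unless the exponents $\alpha_1,\dots,\alpha_r$ are pairwise distinct. Since $\alpha_r \in \{0\}$ forces $\alpha_r = 0$, then $\alpha_{r-1} \in \{0,1\}$ must differ from $0$ so $\alpha_{r-1} = 1$, and continuing downward by induction, distinctness forces $\alpha = (r-1, r-2, \dots, 1, 0)$; in this unique case a reversed-row Vandermonde evaluation gives $A(x^\alpha) = \Delta$, so $A(x^\alpha)/\Delta = 1$. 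On the other side, $p_\star$ decreases cohomological degree by $2\binom{r}{2}$, and for $\alpha$ in the staircase one has $|\alpha| \le \sum_{i=1}^r (r-i) = \binom{r}{2}$ with equality only when $\alpha = (r-1,\dots,0)$. Hence $p_\star(x^\alpha)$ lands in negative degree and vanishes for all other staircase $\alpha$, while for the top element $x_1^{r-1}x_2^{r-2}\cdots x_{r-1}$ the restriction to a fiber $\operatorname{Fl}(\C^r)$ is the class of a point, giving $p_\star(x^{(r-1,\dots,0)}) = 1 \in H^0(\G)$. Both sides therefore agree on the basis, and $H^\udot(\G)$-linearity propagates the equality to all of $H^\udot(\F)$.

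The main technical step to nail down is the pair of standard inputs for full flag bundles: (i) that the staircase monomials form a Leray--Hirsch basis of $H^\udot(\F)$ over $H^\udot(\G)$, and (ii) that the top staircase monomial integrates to $1$ over the fiber full flag variety. Once these are in place (together with matching the sign convention relating the Chern roots $x_i = c_1(\L_i)$ to the Chern classes of $\V_r^\vee$), the lemma reduces to the two-line computation above, and everything else is a linearity and degree count.
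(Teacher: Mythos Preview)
Your proof is correct but takes a genuinely different route from the paper's. The paper exploits the real-analytic $\frS_r$-action on $\F$ inherited from the permutation action on $\P$ via $\iota$: this action preserves the fibers of $p$ but reverses their orientation by $\sgn(\sigma)$, so one obtains $p_\star(f(x)) = \sgn(\sigma)\, p_\star(f(x_{\sigma(1)},\dots,x_{\sigma(r)}))$ for free. After averaging over $\frS_r$ and factoring out the symmetric quotient $A(f)/\Delta$, the only remaining computation is $p_\star(\Delta) = p_\star(\Euler(\T_p)) = r!$, the Euler characteristic of the fiber $\operatorname{Fl}(\C^r)$. Your argument instead treats $p$ purely as the holomorphic full flag bundle of $\V_r$, invokes the Leray--Hirsch staircase basis, and reduces everything to a degree count together with the normalization $p_\star(x_1^{r-1}\cdots x_{r-1}) = 1$. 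Your approach is more self-contained and avoids the non-holomorphic ingredients ($\iota$ and the induced $\frS_r$-action) altogether, at the cost of quoting the Leray--Hirsch basis and the fiber integral; the paper's approach is shorter and dovetails with the abelian/non-abelian comparison framework set up in the surrounding lemmas. The two normalizations---your $p_\star(x_1^{r-1}\cdots x_{r-1})=1$ and the paper's $p_\star(\Delta)=r!$---are of course equivalent standard facts about full flag bundles.
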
 
\begin{proof}
The $\frS_r$-action on $\P$ induces 
a non-algebraic $\frS_r$ action on $\F$ 
which preserves the fibration 
$p \colon \F \to \G$. 
The action of $\sigma \in \frS_r$ on $\F$ 
changes the orientation of each fiber of $p$  
by $\sgn(\sigma)$. 
Therefore $p_\star(f(x)) = \sgn(\sigma) p_\star 
(f(x_{\sigma(1)},\dots,x_{\sigma(r)}))$. 
Thus we have 
\begin{align*} 
p_\star(f(x)) & = \frac{1}{r!} 
p_\star\left( \sum_{\sigma \in \frS_r} \sgn(\sigma) 
f(x_{\sigma(1)},\dots,x_{\sigma(r)}) \right)  \\ 
& = \frac{1}{r!} 
\frac{\sum_{\sigma \in \frS_r} \sgn(\sigma) 
f(x_{\sigma(1)},\dots,x_{\sigma(r)})}{\Delta} 
p_\star(\Delta).  
\end{align*} 
The conclusion follows from $p_\star(\Delta) = 
p_\star(\Euler(\T_p)) = r!$. 
\end{proof} 

\subsection{Comparison of cohomology and $K$-groups}
Using the diagram \eqref{eq:diagPG}, we identify 
the cohomology (or the $K$-group) of $\G$ with the 
anti-symmetric part of the cohomology (resp.~the $K$-group) of $\P$. 
This identification was proved by Martin \cite{Martin00} for 
cohomology groups of general abelian/non-abelian quotients 
and has been generalized to $K$-groups by 
Harada--Landweber \cite{Harada-Landweber}. 
We compare the cohomological and $K$-theoretic 
identifications via the map 
$\E \mapsto \Gg_F \Ch(\E)$. 

We identify the $r$th wedge product 
$\wedge^r H^\udot(\P^{n-1})$ with 
the anti-symmetric part of $H^\udot(\P)$ 
(with respect to the $\frS_r$-action) via the map: 
\[
\wedge^r H^\udot(\P^{n-1}) \ni 
\alpha_1 \wedge \alpha_2 \wedge \cdots \wedge \alpha_r 
\longmapsto 
\sum_{\sigma\in \frS_r} \sgn(\sigma) 
\alpha_{\sigma(1)} \otimes \alpha_{\sigma(2)} 
\otimes \cdots \otimes \alpha_{\sigma(r)} 
\in H^\udot(\P). 
\]
Similarly we identify $\wedge^r K^0(\P)$ with the anti-symmetric 
part of $K^0(\P)$, where $K^0(\cdot)$ denotes the topological $K$-group. 
For a non-increasing sequence $\mu_1\ge \mu_2\ge \cdots \ge \mu_r$ 
of integers, we set 
\[
\E_\mu := p_\star\left(\L_1^{\mu_1} \otimes \L_2^{\mu_2} 
\otimes \cdots \otimes \L_r^{\mu_r} \right). 
\]
By the Borel--Weil theory, $\E_\mu$ is the vector bundle on $\G$ associated 
to the irreducible $GL(r)$-representation of highest weight $\mu$. 
Shifting $\mu_i$ by some number $\mu_i \mapsto \mu_i + \ell$ 
simultaneously corresponds to twisting the $GL(r)$-representation 
by $\det^{\otimes \ell}$, where $\det \colon GL(r) \to \C^\times$ 
corresponds to $\O(1)$ on $\G$. 
These vector bundles span the $K$-group of $\G$. 
We define a line bundle $\cR$ on $\F$ by 
\[
\cR = \L_1^{-(r-1)} \otimes \L_2^{-(r-2)} \otimes 
\cdots \otimes \L_{r-1}^{-1}. 
\]
This restricts to a half-canonical bundle\footnote{corresponding to the 
half-sum $\rho$ of positive roots} on each fiber of $p$. 
One can easily check that: 
\begin{equation} 
\label{eq:half-canonical}
\det(\T_p^\vee) = p^\star(\O(r-1)) \otimes \cR^{\otimes 2} 
\end{equation} 
where $p^\star \O(1) = \L_1\L_2\cdots \L_r$. 
We have the following result: 
\begin{proposition} 
\label{prop:comparison_coh_K}
Let $\mu_1\ge \mu_2 \ge \cdots \ge \mu_r$ be a non-increasing 
sequence  
and let $k_1>k_2>\cdots > k_r$ be the strictly 
decreasing sequence given by $k_i = \mu_i +r-i$. 
Then we have: 
\begin{enumerate}
\item The map $(r!)^{-1} p_\star \iota^\star \colon 
\wedge^r H^\udot(\P) \to H^\udot(\G)$ sends 
the class $x_1^{k_1} \wedge x_2^{k_2} \wedge \cdots \wedge x_r^{k_r}$ 
to the Schubert class $\sigma_\mu$ \eqref{eq:Schubert_class} 
and is an isomorphism. 

\item The map $(r!)^{-1} p_\star (\cR \otimes \iota^\star(\cdot)) 
\colon \wedge^r K^0(\P^{n-1}) \to K^0(\G)$ sends 
the class $\L_1^{k_1} \wedge \L_2^{k_2} \wedge \cdots \wedge \L_r^{k_r}$ 
to the class $\E_\mu$ and is an isomorphism of vector spaces preserving 
the natural $\Z$-lattices. 

\item We have the commutative diagram:
\begin{equation} 
\label{eq:comparison_coh_K}
\begin{aligned}
\xymatrix{
\wedge^r K^0(\P^{n-1}) 
\ar[rrrr]^{\frac{1}{r!}p_\star(\cR \otimes \iota^\star(\cdot))} 
\ar[d]^{\Gg_\P \Ch(\cdot)} 
& & & &
K^0(\G)  
\ar[d]^{\Gg_\G \Ch(\cdot)} 
\\ 
\wedge^r H^\udot(\P^{n-1}) 
\ar[rrrr]^{ \phantom{AB} \frac{\epsilon_r}{r!} e^{-\pi\iu(r-1)\sigma_1} 
 p_\star \iota^\star}
& & & & H^\udot(\G) 
}
\end{aligned} 
\end{equation}
where $\epsilon_r = (2\pi\iu)^{-\binom{r}{2}}$ 
and $\Ch(E) = \sum_{p\ge 0} (2\pi\iu)^p \ch_p(E)$ is the 
modified Chern character.  
\end{enumerate} 
\end{proposition}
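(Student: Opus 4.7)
The plan is to establish parts (1) and (2) by direct computation, then deduce (3) from a modified Grothendieck--Riemann--Roch. For (1), note that $\iota^\star$ acts as the identity on polynomials in $x_1,\dots,x_r$, so the antisymmetric image of $x_1^{k_1}\wedge\cdots\wedge x_r^{k_r}$ pulls back to the Vandermonde-type determinant $\det(x_i^{k_j})$ in $H^\udot(\F)$. Because this class is already $\frS_r$-antisymmetric, every one of the $r!$ summands in Lemma \ref{lem:p_star} contributes equally, giving
\[
p_\star \det(x_i^{k_j})=\frac{r!\,\det(x_i^{k_j})}{\Delta}=r!\,\sigma_\mu
\]
by \eqref{eq:Schubert_class}; dividing by $r!$ yields $\sigma_\mu$. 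The classes $x_1^{k_1}\wedge\cdots\wedge x_r^{k_r}$ with $0\le k_r<\cdots<k_1\le n-1$ form a basis of $\wedge^r H^\udot(\P^{n-1})$, in bijection $\mu_i=k_i-(r-i)$ with the partitions $\mu\subset(n-r)^r$ indexing the Schubert basis of $H^\udot(\G)$, so the map is an isomorphism.

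For (2), recall that $p\colon\F\to\G$ is the full flag bundle of the tautological subbundle, so the pushforward of line bundles can be computed fibrewise by Borel--Weil--Bott: for a weight $\lambda$ with $\lambda+\rho$ regular ($\rho=(r-1,r-2,\dots,0)$), one has $p_\star\L_1^{\lambda_1}\cdots\L_r^{\lambda_r}=\sgn(w)\,\E_{w\cdot\lambda}$ in $K$-theory, where $w\in\frS_r$ is the unique permutation sending $\lambda+\rho$ into the strictly decreasing chamber and $w\cdot\lambda:=w(\lambda+\rho)-\rho$. Applied to the weight of $\cR\otimes\L_{\sigma(1)}^{k_1}\cdots\L_{\sigma(r)}^{k_r}$, whose $\rho$-shift is $(k_{\sigma^{-1}(1)},\dots,k_{\sigma^{-1}(r)})$, the sorting permutation has the same parity as $\sigma$ and produces $w\cdot\lambda=(k_i-(r-i))_i=\mu$; hence each summand contributes $\sgn(\sigma)\,\E_\mu$ and the antisymmetrised total is $r!\,\E_\mu$. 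The map preserves $\Z$-lattices since $\{\L^k\}_{k=0}^{n-1}$ is a $\Z$-basis of $K^0(\P^{n-1})$ and $\{\E_\mu\}_{\mu\subset(n-r)^r}$ is a $\Z$-basis of $K^0(\G)$, matched by the same bijection as in (1).

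For (3), I would invoke the $(2\pi\iu)$-modified Grothendieck--Riemann--Roch formula for $p$: since $p_\star$ shifts cohomological degree by $2d_f$ with $d_f=\dim_\C\F-\dim_\C\G=\binom{r}{2}$, ordinary GRR yields $\Ch(p_\star F)=(2\pi\iu)^{-\binom{r}{2}}p_\star(\Ch(F)\,\Td(\T_p))$. Multiplying by $\Gg_\G$, applying the projection formula with $p^\star\Gg_\G=\Gg_\F/\Gg(\T_p)$, and inserting the factorisation $\Td(\T_p)=e^{\pi\iu c_1(\T_p)}\Gg(\T_p)\Gg(\T_p)^*$ produces
\[
\Gg_\G\Ch(p_\star F)=(2\pi\iu)^{-\binom{r}{2}}\,p_\star\!\left(\Ch(F)\,e^{\pi\iu c_1(\T_p)}\,\Gg_\F\,\Gg(\T_p)^*\right).
\]
Setting $F=\cR\otimes\iota^\star E$, the combined exponential factor becomes $e^{\pi\iu(c_1(\T_p)+2c_1(\cR))}$, and \eqref{eq:half-canonical} gives $c_1(\T_p)+2c_1(\cR)=-(r-1)\sigma_1$, which is $p$-pulled back from $\G$ and so exits $p_\star$ via the projection formula.

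The last and crucial ingredient is the identification $\iota^\star\Gg_\P=\Gg_\F\,\Gg(\T_p)^*$. This follows from Lemma \ref{lem:tangentbundles} together with Remark \ref{rem:complexification}, which supply the topological isomorphism $\iota^\star T\P\cong p^\star T\G\oplus\T_p\oplus\ov\T_p$ of complex vector bundles, combined with $\Gg(\ov V)=\Gg(V)^*$ (since complex conjugation negates Chern roots). Assembling everything, the factor $(2\pi\iu)^{-\binom{r}{2}}$ precisely matches $\epsilon_r$ and the commutativity of \eqref{eq:comparison_coh_K} drops out. The main obstacle is exactly this identification of $\iota^\star\Gg_\P$: the whole diagram hinges on the geometric fact that the normal bundle of the real-analytic embedding $\iota$ carries the \emph{conjugate} complex structure of $\T_p$ rather than the same one, so that an extra $\Gg(\T_p)^*$ appears on the $\P$-side and neatly cancels against the $\Gg(\T_p)^*$ produced by the Todd-to-Gamma factorisation on the $\G$-side.
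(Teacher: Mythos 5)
Your proof is correct, and for Part (2) it takes a genuinely different route than the paper. Parts (1) and (3) coincide with the paper's argument: Part (1) is exactly the computation from Lemma \ref{lem:p_star} and \eqref{eq:Schubert_class}; Part (3) is the same Grothendieck--Riemann--Roch calculation using the topological splitting $\iota^\star T\P\cong p^\star T\G\oplus\T_p\oplus\ov\T_p$ and the Todd--Gamma factorisation, just organised by first converting $p^\star\Gg_\G\cdot\Td(\T_p)$ into $\Gg_\F\,\Gg(\T_p)^*\,e^{\pi\iu c_1(\T_p)}$ and then matching against $\iota^\star\Gg_\P$, rather than going in the other direction as the paper does. (Your exponential bookkeeping is right: $\Ch(\cR)e^{\pi\iu c_1(\T_p)}=e^{-\pi\iu(r-1)\sigma_1}$ by \eqref{eq:half-canonical}; the paper's proof has a sign typo in the intermediate line that your version silently corrects.)

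Where you genuinely diverge is Part (2). The paper proves it via Grothendieck--Riemann--Roch: it computes $\ch\bigl(p_\star(\cR\otimes\iota^\star(\L_1^{k_1}\otimes\cdots\otimes\L_r^{k_r}))\bigr)=p_\star\bigl(e^{\sum k_ix_i}\prod_{i<j}\tfrac{x_i-x_j}{e^{x_j}-e^{x_i}}\bigr)$, observes via Lemma \ref{lem:p_star} that this is antisymmetric in the $k_i$, and therefore equals $\ch(\E_\mu)$ after dividing by $r!$. You instead apply Borel--Weil--Bott summand by summand: each term $\sgn(\sigma)\,\cR\otimes\L_{\sigma(1)}^{k_1}\cdots\L_{\sigma(r)}^{k_r}$ has $\rho$-shifted weight $(k_{\sigma^{-1}(1)},\dots,k_{\sigma^{-1}(r)})$, the sorting permutation is $\sigma^{-1}$ with $\sgn(\sigma^{-1})=\sgn(\sigma)$, so each of the $r!$ summands pushes forward to $+\E_\mu$. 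Your route works directly in $K$-theory, avoids passing through Chern characters (and hence any appeal to injectivity of $\ch\otimes\Q$), and makes the integrality of the lattice identification transparent; the paper's route is more elementary in that it only invokes GRR and a cohomological antisymmetry lemma, at the cost of the Chern-character detour. Both are sound, but when you invoke the signed Borel--Weil--Bott formula $p_\star\L^\lambda=\sgn(w)\E_{w\cdot\lambda}$ you should give a reference or at least note the convention (the $\L_i=(\V_i/\V_{i-1})^\vee$ are the standard duals, consistent with the paper's definition of $\E_\mu$).
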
 
\begin{proof} 
(1): This follows from Lemma \ref{lem:p_star} and 
the definition of the Schubert class $\sigma_\mu$ 
\eqref{eq:Schubert_class}. 

(2): Using the Grothendieck--Riemann--Roch theorem, we have 
\begin{align*} 
\ch\left(p_\star(\cR \otimes \iota^\star(\L_1^{k_1} \otimes \cdots 
\otimes\L_r^{k_r}))\right) & = p_\star(\ch(\cR\otimes \L_1^{k_1}\otimes 
\cdots \otimes \L_r^{k_r}\otimes \ch(\T_p) ) \\
&= p_\star\left( e^{\mu_1 x_1+\cdots + \mu_r x_r} 
\prod_{i<j} \frac{x_i-x_j}{1 - e^{x_i-x_j}} \right) \\
& = p_\star\left(e^{k_1 x_1 + \cdots + k_r x_r} \prod_{i<j} 
\frac{x_i-x_j}{e^{x_j} - e^{x_i}} \right). 
\end{align*} 
By Lemma \ref{lem:p_star}, it follows that this is anti-symmetric in 
$k_1,\dots,k_r$. Therefore 
\begin{align*} 
\frac{1}{r!} 
\ch\left(p_\star(\cR\otimes \iota^\star(\L_1^{k_1} \wedge 
\cdots \wedge \L_r^{k_r}))\right ) 
& = 
\ch\left(p_\star(\cR\otimes \iota^\star(\L_1^{k_1}\otimes 
\cdots \otimes \L_r^{k_r}))\right) \\
& = \ch \left(p_\star(\L_1^{\mu_1} \otimes 
\cdots \otimes \L_1^{\mu_r}) \right) 
= \ch(\E_\mu).  
\end{align*} 
This shows that $(r!)^{-1}p_\star(\cR \otimes \iota^\star(\L_1^{k_1} 
\wedge \cdots \wedge \L_r^{k_r}))= \E_\mu$. 

(3): It suffices to prove the commutativity of the following two diagrams: 
\[
\xymatrix{
\wedge^r K^0(\P^{n-1}) \ar[r]^{\phantom{AB}\iota^\star} 
\ar[d]^{\Gg_\P \Ch(\cdot)} 
& 
K^0(\F) \ar[d]^{(\iota^\star\Gg_\P) \Ch(\cdot)} \\
\wedge^r H^0(\P^{r-1}) \ar[r]^{\phantom{AB}\iota^\star} 
& 
H^0(\G) 
} 
\quad 
\xymatrix{
K^0(\F) \ar[rrr]^{p_\star (\cR \otimes (\cdot))} 
\ar[d]^{(\iota^\star \Gg_\P) \Ch(\cdot)}
& & & 
K^0(\G) 
\ar[d]^{\Gg_\G \Ch(\cdot)} 
\\ 
H^\udot(\F) 
\ar[rrr]^{\epsilon_re^{-\pi\iu(r-1)\sigma_1}p_\star}
& & & 
H^\udot(\G)}
\]
The commutativity of the left diagram is obvious. 
The commutativity of the right diagram follows 
from Grothendieck--Riemann--Roch. 
For $\alpha \in K^0(\F)$, we have 
\begin{align*} 
\Ch(p_\star(\cR\otimes \alpha)) = 
\epsilon_r p_\star(\Ch(\cR\otimes \alpha)\Td(\T_p)). 
\end{align*} 
From $\iota^\star T\P \cong p^\star T\G \oplus 
\T_p \oplus \ov{\T_p}$ (Lemma \ref{lem:tangentbundles} 
and Remark \ref{rem:complexification}), we have 
\[
\iota^\star\Gg_\P = (p^\star \Gg_\P) \Gg(\T_p) \Gg(\T_p^\vee) 
= (p^\star \Gg_\P) \Td(\T_p) e^{-\pi\iu c_1(\T_p)}   
\] 
where we used the fact that the Gamma class is a square root 
of the Todd class (see \S\ref{subsec:HRR}). 
Therefore 
\begin{align*} 
\Gg_\G \Ch(p_\star(\cR\otimes \alpha)) 
& = \epsilon_r p_\star\left( 
(p^\star\Gg_\G) \Ch(\cR\otimes \alpha) \Td(\T_p) \right) \\ 
& = \epsilon_r p_\star\left( (\iota^\star \Gg_\P) \Ch(\alpha) 
\Ch(\cR) e^{-\pi\iu c_1(\T_p)}\right).  
\end{align*}
The relationship \eqref{eq:half-canonical} gives 
$\Ch(\cR) e^{-\pi\iu c_1(\T_p)} 
= e^{-\pi\iu(r-1) \sigma_1}$ and the 
commutativity follows. 
\end{proof} 

We define the Euler pairing on $\wedge^r K^0(\P^{n-1})$ as 
\[
\chi(\alpha_1\wedge \alpha_2\wedge \cdots \wedge \alpha_r, 
\beta_1\wedge \beta_2 \wedge \cdots \wedge \beta_r) 
:= \det(\chi_{\P^{n-1}}(\alpha_i,\beta_j))_{1\le i,j\le r}. 
\]
This is $1/r!$ of the Euler pairing induced from $K^0(\P)$. 
Recall the non-symmetric pairing $[\cdot,\cdot)$ defined in 
\eqref{eq:pairing_[)}. 
Similarly we define the pairing $[\cdot,\cdot)$ on $\wedge^r H^\udot(\P^{n-1})$ 
by 
\begin{equation} 
\label{eq:antisymmetrized_[)}
[\alpha_1\wedge \alpha_2\wedge \cdots \wedge \alpha_r, 
\beta_1\wedge \beta_2\wedge \cdots \wedge \beta_r)
:= \det([\alpha_i,\beta_j))_{1\le i,j\le r}. 
\end{equation}
This is again $1/r!$ of the pairing $[\cdot,\cdot)$ on $H^\udot(\P)$. 

\begin{proposition} 
\label{prop:pairing_ANA}
The horizontal maps in the diagram \eqref{eq:comparison_coh_K} 
preserves the pairings.  
More precisely, the map 
\[
(r!)^{-1} p_\star (\cR \otimes \iota^\star(\cdot)) 
\colon \wedge^r K^0(\P^{n-1}) \to K^0(\G)
\] 
preserves the Euler pairing $\chi$ and the map 
\[(r!)^{-1} \epsilon_r e^{-\pi\iu (r-1) \sigma_1} 
p_\star \iota^\star \colon \wedge^r H^\udot(\P^{n-1}) 
\to H^\udot(\G)
\] 
preserves the pairing $[\cdot,\cdot)$. 
\end{proposition}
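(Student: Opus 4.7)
The plan is to reduce the $K$-theoretic assertion to the cohomological one via the commutative diagram \eqref{eq:comparison_coh_K}, and then verify the cohomological assertion by a direct calculation. Indeed, the vertical arrows $E\mapsto \Gg_F\Ch(E)$ in \eqref{eq:comparison_coh_K}, with $F$ either $\G$ or $\P^{n-1}$, convert $\chi_F$ to $[\cdot,\cdot)_F$ by the Hirzebruch--Riemann--Roch factorization \eqref{eq:factorize_HRR}; taking $r$th exterior powers of $\Gg_{\P^{n-1}}\Ch(\cdot)$ then converts the determinantal pairing $\chi^{\wedge r}$ on $\wedge^r K^0(\P^{n-1})$ into the determinantal pairing $[\cdot,\cdot)^{\wedge r}$ on $\wedge^r H^\udot(\P^{n-1})$. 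Since the vertical maps are vector-space isomorphisms, pairing-preservation of the lower horizontal map is equivalent to pairing-preservation of the upper horizontal map.

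For the cohomological assertion, denote $P := (r!)^{-1}\epsilon_r e^{-\pi\iu(r-1)\sigma_1}p_\star\iota^\star$. The crucial simplification is that for antisymmetric $\alpha\in H^\udot(\P)$, Lemma \ref{lem:p_star} yields $p_\star\iota^\star\alpha = r!\,\alpha/\Delta$, because each of the $r!$ permutations in the antisymmetrization contributes $\iota^\star\alpha$, and $\alpha/\Delta$ is a well-defined symmetric polynomial (every antisymmetric polynomial is divisible by $\Delta$). Hence $P(\alpha) = \epsilon_r\,e^{-\pi\iu(r-1)\sigma_1}(\alpha/\Delta)$. Next, we convert integrals over $\G$ to integrals over $\P$: combining $\int_\G\gamma = (1/r!)\int_\F \gamma\cdot\Delta$ (which uses $p_\star\Delta = r!$, cf.~the proof of Lemma \ref{lem:p_star}) with $\int_\F f = \int_\P f\cdot\ov{\Delta}$ (which uses $\iota_\star 1 = \ov{\Delta}$ from Lemma \ref{lem:iota_star}), and using $\ov{\Delta} = (-1)^d\Delta$ with $d = \binom{r}{2}$, we get
\[
\int_\G\gamma = \frac{(-1)^d}{r!}\int_\P\gamma\cdot\Delta^2
\]
for every symmetric polynomial class $\gamma$.

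Substituting these into $[P(\alpha),P(\beta))_\G$ leaves an integral over $\P$ whose integrand is reshaped by two elementary manipulations. First, since $\sigma_1\in H^2(\P)$, a direct expansion of $e^{\pi\iu\mu_\P}$ gives the commutation identity $e^{\pi\iu\mu_\P}(e^{-\pi\iu c\,\sigma_1}\phi) = e^{\pi\iu c\,\sigma_1}e^{\pi\iu\mu_\P}(\phi)$, which causes the twists $e^{\mp\pi\iu(r-1)\sigma_1}$ coming from $P(\alpha)$ and $P(\beta)$ to cancel each other. Second, $\mu_\G = \mu_\P + d$ on symmetric classes (because $\dim\P - \dim\G = 2d$), and $e^{\pi\iu\mu_\P}(\alpha/\Delta)\cdot\Delta = (-1)^d e^{\pi\iu\mu_\P}(\alpha)$ since $\Delta\in H^{2d}(\P)$. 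Together with $c_1(\P) = c_1(\G) = n\sigma_1$, these reduce the integrand on $\P$ to $e^{\pi\iu c_1(\P)}e^{\pi\iu\mu_\P}(\alpha)\cdot\beta$ times a scalar, and that scalar agrees with $(2\pi)^{\dim\P - \dim\G}/r!$ by the numerical identity $(-1)^d\epsilon_r^2 = (2\pi)^{-2d}$, which is immediate from $\epsilon_r = (2\pi\iu)^{-d}$. One concludes $[P(\alpha),P(\beta))_\G = (1/r!)[\alpha,\beta)_\P$, as required. The main obstacle is purely clerical: keeping careful track of the scalar prefactors --- the interplay of $\epsilon_r$, $\iu^d$, and powers of $2\pi$ coming from the grading shift $\mu_\G - \mu_\P = d$, together with the fact that $e^{\pi\iu\mu}$ fails to commute with cup multiplication except up to an explicit sign depending on degree.
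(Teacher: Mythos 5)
Your proof is correct and follows essentially the same route as the paper's. Both reduce the $K$-theoretic statement to the cohomological one via the HRR factorization, observe that the twist $e^{-\pi\iu(r-1)\sigma_1}$ is an isometry for $[\cdot,\cdot)$, and then reduce to the identity $[\cdot,\cdot)_\G = (1/r!)[\cdot,\cdot)_\P$ on antisymmetric classes; you package the final step as $\int_\G\gamma = \frac{(-1)^{\binom{r}{2}}}{r!}\int_\P\gamma\Delta^2$ together with $p_\star\iota^\star\alpha = r!\,\alpha/\Delta$, while the paper phrases the same computation through the projection formula and the operator identity $\iota_\star p^\star p_\star\iota^\star\beta = r!(-1)^{\binom{r}{2}}\beta$ from Lemmas \ref{lem:iota_star} and \ref{lem:p_star}.
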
 
\begin{proof} 
Since the vertical maps in \eqref{eq:comparison_coh_K} intertwines 
the pairings $\chi$ and $[\cdot,\cdot)$ (see \eqref{eq:factorize_HRR}), 
it suffices to show that the map $(r!)^{-1} \epsilon_r e^{-\pi\iu(r-1)\sigma_1}
p_\star \iota^\star$ 
on cohomology preserves the pairing $[\cdot,\cdot)$. 
For $\alpha,\beta \in \wedge^r H^\udot(\P^{n-1})$, we have 
\begin{align*} 
& \left[ (r!)^{-1} \epsilon_r e^{-\pi\iu(r-1) \sigma_1} 
p_\star \iota^\star(\alpha), 
(r!)^{-1} \epsilon_r e^{-\pi\iu(r-1) \sigma_1} 
p_\star \iota^\star(\beta) \right) \\ 
& \qquad = \left(\frac{\epsilon_r}{r!}\right)^2 
\frac{1}{(2\pi\iu)^{\dim \G}}
\int_\G e^{\pi\iu c_1(\G)} 
(e^{\pi\iu \deg/2} p_\star\iota^\star\alpha) \cup 
p_\star \iota^\star(\beta) \\ 
& \qquad = \frac{1}{(r!)^2} 
\frac{(-1)^{\binom{r}{2}}}{(2\pi\iu)^{\dim \P}}
\int_\G 
(p_\star\iota^\star e^{\pi\iu c_1(\P)}e^{\pi\iu \deg/2}  \alpha) \cup 
p_\star \iota^\star(\beta) \\ 
& \qquad = \frac{1}{(r!)^2} 
\frac{(-1)^{\binom{r}{2}}}{(2\pi\iu)^{\dim \P}}
\int_\P
(e^{\pi\iu c_1(\P)}e^{\pi\iu \deg/2}  \alpha) \cup 
\iota_\star p^\star p_\star \iota^\star(\beta). 
\end{align*} 
By the formulas in Lemma \ref{lem:iota_star} and Lemma \ref{lem:p_star}, 
it follows easily that $\iota_\star p^\star p_\star \iota^\star \beta 
= r! (-1)^{\binom{r}{2}} \beta$ for the antisymmetric element $\beta$. 
Therefore this gives $1/r!$ of the pairing $[\cdot,\cdot)$ 
on $\P$ and the conclusion follows. 
\end{proof} 

\begin{corollary}[\cite{Ue05}] 
For $\mu_1\ge \mu_2 \ge \cdots \ge \mu_r$ 
and $\nu_1\ge \nu_2 \ge \cdots \ge \nu_r$ 
\[
\chi(\E_\mu, \E_\nu) = \det\left( 
\chi(\O_{\P^{n-1}}(l_i), 
\O_{\P^{n-1}}(k_i) )_{1\le i,j\le r} \right) 
\]
where $l_i = \mu_i + r- i$ and $k_i = \nu_i + r-i$. 
\end{corollary}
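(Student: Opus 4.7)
The corollary is essentially a formal consequence of the two preceding propositions (Proposition \ref{prop:comparison_coh_K} and Proposition \ref{prop:pairing_ANA}), so the plan is to chain those together rather than do any new computation. First I would rewrite $\E_\mu$ and $\E_\nu$ as images under the antisymmetrization map $\wedge^r K^0(\P^{n-1}) \to K^0(\G)$ of part (2) of Proposition \ref{prop:comparison_coh_K}. Concretely, with the strictly decreasing sequences $l_i = \mu_i + r - i$ and $k_j = \nu_j + r - j$, one has
\[
\E_\mu = \tfrac{1}{r!}\, p_\star\bigl(\cR \otimes \iota^\star(\L_1^{l_1}\wedge\cdots\wedge \L_r^{l_r})\bigr), \qquad
\E_\nu = \tfrac{1}{r!}\, p_\star\bigl(\cR \otimes \iota^\star(\L_1^{k_1}\wedge\cdots\wedge \L_r^{k_r})\bigr).
\]

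Next I would invoke the first half of Proposition \ref{prop:pairing_ANA}, which asserts that the map $(r!)^{-1} p_\star(\cR \otimes \iota^\star(\cdot))$ is an isometry from $\wedge^r K^0(\P^{n-1})$ (equipped with the antisymmetrized Euler pairing) to $K^0(\G)$ (with its Euler pairing). Transferring the pairing to the source yields
\[
\chi(\E_\mu, \E_\nu)
= \chi_\wedge\bigl(\L_1^{l_1}\wedge\cdots\wedge \L_r^{l_r},\ \L_1^{k_1}\wedge\cdots\wedge \L_r^{k_r}\bigr).
\]
Finally I would unfold the definition of the Euler pairing on $\wedge^r K^0(\P^{n-1})$: for decomposable elements it is exactly the determinant $\det(\chi_{\P^{n-1}}(\alpha_i,\beta_j))_{1\le i,j\le r}$. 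Applying this to $\alpha_i = \L^{l_i}$ and $\beta_j = \L^{k_j}$ (where we now view each $\L_i^{l_i}$ as the class $\O_{\P^{n-1}}(l_i)$ in the respective tensor slot, identified with a factor of $\wedge^r K^0(\P^{n-1})$) gives the claimed formula.

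There is no real obstacle: the proof is essentially two rewrites plus the definition of the determinantal pairing. The only minor points of care will be bookkeeping --- specifically, confirming that under the chosen identification $K^0(\P) = K^0(\P^{n-1})^{\otimes r}$, the class $\L_1^{k_1}\otimes\cdots\otimes \L_r^{k_r}$ corresponds to $\O(k_1)\otimes\cdots\otimes\O(k_r)$, so that after antisymmetrization the pairing on $\wedge^r K^0(\P^{n-1})$ reads off the determinant with entries $\chi_{\P^{n-1}}(\O(l_i),\O(k_j))$ and not some shifted or signed variant. Once the conventions are pinned down, no further input is needed beyond the two propositions already in hand.
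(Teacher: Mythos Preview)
Your proposal is correct and is exactly the argument the paper intends: the corollary is stated without proof immediately after Proposition~\ref{prop:pairing_ANA}, as a direct consequence of combining part~(2) of Proposition~\ref{prop:comparison_coh_K} with the pairing-preservation of Proposition~\ref{prop:pairing_ANA} and the determinantal definition of $\chi$ on $\wedge^r K^0(\P^{n-1})$. Your bookkeeping remark about identifying $\L_i^{l_i}$ with $\O_{\P^{n-1}}(l_i)$ is the only thing to verify, and it is immediate from the definition of $\L_i$ as the pullback of $\O(1)$ from the $i$th factor.
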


\begin{remark} 
The map $(r!)^{-1} p_\star \iota^\star \colon 
\wedge^r H^\udot(\P^{n-1}) \to H^\udot(\G)$ 
sending $x_1^{\mu_1+r-1} \wedge x_2^{\mu_2+r-2} 
\wedge \cdots \wedge x_r^{\mu_r}$ to $\sigma_\mu$ 
is called the \emph{Satake identification} in \cite{GGI:gammagrass}. 
\end{remark} 

\subsection{Quantum cohomology central charge} 
Here we restate the abelian/non-abelian correspondence 
of the $J$-functions in terms of 
quantum cohomology central charges. 
Recall from Remark \ref{rem:centralcharges} that the 
central charge $Z^\G(E)$ of a vector bundle $E\to \G$ 
is given by
\begin{equation}
\label{eq:Z} 
Z^\G(E)(t)= 
(2\pi\iu)^{\dim \G} 
\left[J_\G(e^{\pi\iu}t), \Gg_\G \Ch(E) \right), 
\end{equation} 
where $[\cdot,\cdot)$ is the pairing defined in \eqref{eq:pairing_[)}. 
This is a function on the universal cover of the punctured $t$-plane 
$\C^\times$. A branch of this function is determined when we 
specify $\arg t\in\R$. 
When $t\in \R_{>0}$, we regard $\arg t = 0$ unless otherwise 
specified.  
For $\alpha\in K^0(\P)$, 
we define 
\[
Z^\P(\alpha) := (2\pi\iu)^{\dim \P} 
\left[J_\P(e^{\pi\iu}t_1,\dots,e^{\pi\iu}t_r), 
\Gg_\P \Ch(\alpha) 
\right) 
\]
where $[\cdot,\cdot)$ is the pairing on $H^\udot(\P)$. 
When $\alpha = \alpha_1 \wedge \alpha_2 \wedge 
\cdots \wedge \alpha_r\in \wedge^r K^0(\P^{n-1})$, we have 
\[
Z^\P(\alpha) = \det\left(Z^{\P^{n-1}}(\alpha_i)(t_j)\right)_{1\le i,j\le r}
\]
where $Z^{\P^{n-1}}$ is the quantum cohomology 
central charge for $\P^{n-1}$. 

\begin{proposition}
\label{prop:cc_ANA} 
Let $Z^\G$ and $Z^{\P}$ denote the quantum cohomology 
central charges of $\G$ and $\P$ respectively. 
Then we have the equality: 
\begin{equation*}
Z^\G(p_\star (\cR\otimes\iota^\star \alpha)) 
= \frac{1}{ (2\pi\iu n)^{\binom{r}{2}}} 
\left[ \prod_{i<j} (\theta_i -\theta_j) \cdot 
Z^\P(\alpha) 
\right]_{t_1 = \cdots = t_r= \xi t},   
\end{equation*} 
where $\alpha \in \wedge^r K^0(\P^{n-1})$ 
and $\xi := e^{\pi\iu (r-1)/n}$. 
\end{proposition}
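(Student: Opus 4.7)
The strategy is to transfer both arguments of the pairing defining $Z^\G(p_\star(\cR\otimes\iota^\star\alpha))$ from $H^\udot(\G)$ to $\wedge^r H^\udot(\P^{n-1})$ via the Satake-type map $M := (r!)^{-1}\epsilon_r e^{-\pi\iu(r-1)\sigma_1}p_\star\iota^\star$ from Proposition \ref{prop:pairing_ANA}, then invoke that proposition. The second argument is immediate from the commutative diagram \eqref{eq:comparison_coh_K}: by $\C$-linearity,
$$\Gg_\G\Ch\bigl(p_\star(\cR\otimes\iota^\star\alpha)\bigr) = r!\cdot M\bigl(\Gg_\P\Ch(\alpha)\bigr).$$
The key intermediate claim is the analogous formula on the $J$-function side:
$$J_\G(t) = M(\beta_0(t)), \qquad \beta_0(t) := \frac{1}{\epsilon_r n^{\binom{r}{2}}} \prod_{i<j}(\theta_i-\theta_j)\, J_\P(t_1,\ldots,t_r)\Big|_{t_i=\xi t}.$$
Note that $\beta_0(t)$ is antisymmetric in the $x_i$'s (since $\prod(\theta_i-\theta_j)$ is $\frS_r$-antisymmetric while $J_\P$ is $\frS_r$-invariant), so it does lie in the image of $\wedge^r H^\udot(\P^{n-1})$ inside $H^\udot(\P)$.

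To establish the identity for $J_\G$, I would rewrite Theorem \ref{thm:BCFK} as
$\prod(\theta_i-\theta_j)\iota^\star J_\P|_{t_i=\xi t} = n^{\binom{r}{2}}\,\Delta\cdot e^{\pi\iu(r-1)\sigma_1}\cdot p^\star J_\G(t),$
apply $p_\star$, and invoke two facts: first, $\sigma_1 = p^\star c_1(\det\V_r^\vee)$, so the exponential passes through $p_\star$ by the projection formula and cancels the one built into $M$; second, $p_\star\Delta = r!$, which is immediate from Lemma \ref{lem:p_star} applied to $\Delta$ (equivalently, $\Delta = \Euler(\T_p)$ and the fiber of $p\colon\F\to\G$ has Euler characteristic $r!$). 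This produces a factor of $r!$ that exactly cancels the $(r!)^{-1}$ in $M$, yielding $J_\G(t)$.

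Once the claim is proved, Proposition \ref{prop:pairing_ANA} together with the relation $[\cdot,\cdot)_{\wedge^r} = (r!)^{-1}[\cdot,\cdot)_\P$ on the antisymmetric subspace gives
\begin{align*}
[J_\G(e^{\pi\iu}t), \Gg_\G\Ch(p_\star(\cR\otimes\iota^\star\alpha)))_\G
&= r!\cdot [M(\beta_0(e^{\pi\iu}t)), M(\Gg_\P\Ch(\alpha)))_\G \\
&= r!\cdot [\beta_0(e^{\pi\iu}t), \Gg_\P\Ch(\alpha))_{\wedge^r} \\
&= [\beta_0(e^{\pi\iu}t), \Gg_\P\Ch(\alpha))_\P,
\end{align*}
with the two $r!$'s cancelling. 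Pulling the linear operator $\prod(\theta_i-\theta_j)$ outside the pairing and substituting $s_i = e^{-\pi\iu}t_i$ (which preserves $\theta_i = t_i\partial_{t_i}$) identifies the result with $\frac{(2\pi\iu)^{-\dim\P}}{\epsilon_r n^{\binom{r}{2}}}\prod(\theta_i-\theta_j)\,Z^\P(\alpha)\big|_{t_i=\xi t}$. Multiplying by $(2\pi\iu)^{\dim\G}$ and using $\dim\G-\dim\P = -2\binom{r}{2}$ together with $\epsilon_r = (2\pi\iu)^{-\binom{r}{2}}$ collapses the overall prefactor to $(2\pi\iu n)^{-\binom{r}{2}}$, as required.

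The main technical step is the $J$-function identity $J_\G = M(\beta_0)$, which is essentially an inversion of Theorem \ref{thm:BCFK} powered by $p_\star\Delta = r!$; the other steps are formal consequences of Propositions \ref{prop:comparison_coh_K} and \ref{prop:pairing_ANA}. A secondary bookkeeping challenge is the cancellation of the two $r!$ factors (one from the Chern character side, one from the comparison of pairings on $\wedge^r$ versus $H^\udot(\P)$) along with the absorption of $\epsilon_r$ into powers of $2\pi\iu$; these must conspire exactly so that no spurious $r!$ survives in the final prefactor.
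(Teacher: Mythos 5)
Your proposal is correct and follows essentially the same route as the paper's own proof: rewrite Theorem~\ref{thm:BCFK} via Lemma~\ref{lem:p_star} (or equivalently the projection formula plus $p_\star\Delta = r!$) to express $J_\G$ as the image of an antisymmetric class under the Satake-type map, combine this with Proposition~\ref{prop:comparison_coh_K}(3) for the $\Gg\Ch$ side, invoke the pairing comparison from Proposition~\ref{prop:pairing_ANA}, and finish with the dimension/$\epsilon_r$ bookkeeping. You spell out a few steps the paper leaves implicit (the projection-formula justification of the $J$-function identity, the change of variables $s_i=e^{-\pi\iu}t_i$ under $\theta_i$, and the factor-of-$r!$ relation between $[\cdot,\cdot)_{\wedge^r}$ and $[\cdot,\cdot)_\P$), but the logical structure is identical.
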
 
\begin{proof}
By Lemma \ref{lem:p_star}, the abelian/non-abelian correspondence 
(Theorem \ref{thm:BCFK}) can be written in the form: 
\[
J_\G(t) = \frac{1}{r!} e^{-\pi\iu (r-1)\sigma_1}p_\star \iota^\star
\tJ(t) 
\]
with 
\[
\tJ(t) = 
\left[ n^{-\binom{r}{2}}\prod_{i<j} (\theta_i -\theta_j) 
J_\P(t_1,\dots,t_r) \right]_{t_1= \cdots =t_r = \xi t }. 
\]
By Proposition \ref{prop:comparison_coh_K} (3), we have 
\[
\Gg_\G \Ch(p_\star(\cR \otimes \iota^\star\alpha)) 
=  \epsilon_r e^{-\pi\iu (r-1) \sigma_1} 
p_\star \iota^\star \left(\Gg_\P \Ch(\alpha)\right).  
\]
By the above equations and the fact that $\frac{\epsilon_r}{r!}
e^{-\pi\iu(r-1)\sigma_1} p_\star \iota^\star$ 
preserves the pairing $[\cdot,\cdot)$ (Proposition \ref{prop:pairing_ANA}), 
we obtain 
\begin{align*} 
Z^\G(p_\star(\cR \otimes \iota^\star \alpha)) 
= (2\pi\iu)^{\dim \G+ \binom{r}{2}}
r! \left[ \tJ(t), 
\Gg_\P \Ch(\alpha) \right) 
\end{align*} 
where the pairing $[\cdot,\cdot)$ on the right-hand side 
denotes the pairing \eqref{eq:antisymmetrized_[)} on 
$\wedge^r H^\udot(\P^{n-1})$ 
(which is $1/r!$ of the pairing $[\cdot,\cdot)$ on $H^\udot(\P)$). 
Using $\dim G + \binom{r}{2} = \dim \P -\binom{r}{2}$, 
we obtain the formula in the proposition. 
\end{proof} 

\subsection{Integral representation of quantum cohomology central charges} 
In this section we give integral representations of quantum cohomology 
central charges in terms of the Hori--Vafa mirrors. 
Let $f\colon (\C^\times)^{n-1} \to \C$ be 
the mirror Laurent polynomial of $\P^{n-1}$ (see \S \ref{sec:heuristics}): 
\[
f(y) = y_1 + \cdots + y_{n-1} + \frac{1}{y_1 y_2 \cdots y_{n-1}}. 
\] 
The critical values of $f$ are given as 
\[
v_k := n e^{- 2\pi\iu k/n}, 
\quad k \in \Z/n \Z. 
\]
Let $\phi\in \R$ be an admissible phase for 
$\{v_0,v_1,\dots,v_{n-1}\}$ 
(see \S \ref{subsec:lift}). 
Let $\Gamma_k(\phi) \subset (\C^\times)^{n-1}$ denote 
the Lefschetz thimble for $f$ 
associated with the critical value $v_k$ and the vanishing path 
$v_k + \R_{\ge 0} e^{\iu\phi}$ (see Figure \ref{fig:vanishing_paths}). 
The Lefschetz thimble $\Gamma_k(\phi)$ is homeomorphic to  $\R^{n-1}$ and 
fibers over the straight half-line $f(\Gamma_k) = v_k + \R_{\ge 0} e^{\iu\phi}$; 
the fiber is a vanishing cycle 
homeomorphic to $S^{n-2}$. 
We write $\Gamma_k^\vee(\phi)$ for the ``opposite'' Lefschetz thimble 
associated to the critical value $v_k$ and the 
path $v_k - \R_{\ge 0} e^{\iu\phi}$. 
We choose orientation of Lefschetz thimbles such that 
$\sharp(\Gamma_k(\phi) \cap \Gamma_k^\vee(\phi)) 
= (-1)^{(n-1)(n-2)/2}$. 

\begin{figure}[htb] 
\begin{picture}(300,100)(0,5)
\put(150,40){\circle{60}}
\put(180,40){\makebox(0,0){$\bullet$}}
\put(165,66){\makebox(0,0){$\bullet$}}
\put(135,66){\makebox(0,0){$\bullet$}} 
\put(120,40){\makebox(0,0){$\bullet$}} 
\put(165,14){\makebox(0,0){$\bullet$}} 
\put(135,14){\makebox(0,0){$\bullet$}} 
\put(180,40){\line(3,1){50}}
\put(165,66){\line(3,1){50}}
\put(135,66){\line(3,1){75}} 
\put(120,40){\line(3,1){110}} 
\put(165,14){\line(3,1){70}} 
\put(135,14){\line(3,1){100}} 
\put(230,15){\vector(3,1){40}}
\put(280,30){\scriptsize admissible direction $\phi$} 
\end{picture}
\caption{Vanishing paths in the admissible direction $\phi$}
\label{fig:vanishing_paths}
\end{figure}
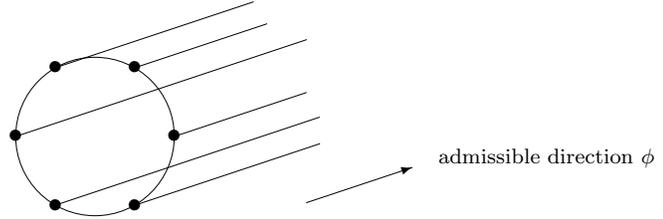

\begin{proposition}
\label{prop:cc_Pn-1} 
Let $\phi\in \R$ be an admissible phase 
for $\{v_0,v_1,\dots,v_{n-1}\}$. 
There exist $K$-classes $\cF_k(\phi) , \cG_k(\phi)  
\in K^0(\P^{n-1})$ 
for $k\in \Z/n\Z$ such that 
\begin{align}
\label{eq:FG_oscint}
\begin{split} 
Z^{\P^{n-1}}(\cF_k(\phi))(t) 
& =  
\int_{\Gamma_k(\phi)}  
e^{-tf(y)} \frac{dy}{y}, \\ 
Z^{\P^{n-1}}(\cG_k(\phi))(e^{-\pi\iu} t)  
& = 
\int_{\Gamma^\vee_k(\phi)} 
e^{tf(y)} \frac{dy}{y}, 
\end{split} 
\end{align} 
hold when $|\arg t + \phi|<\frac{\pi}{2}$, 
where $\frac{dy}{y}= \bigwedge_{i=1}^{n-1} \frac{dy_i}{y_i}$. 
Moreover, we have: 
\begin{enumerate}
\item $\chi(\cF_k(\phi),\cG_l(\phi)) = \delta_{kl}$; 
\item when $|k \frac{2\pi}{n} + \phi|<\frac{\pi}{2} + 
\frac{\pi}{n}$, 
we have $\cF_k(\phi) = \O_{\P^{n-1}}(k)$. 
\end{enumerate}
\end{proposition}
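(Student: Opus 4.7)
The plan is to combine the toric mirror theorem of \cite{Iritani09} with the cyclic $\Z/n$-symmetry of the Hori--Vafa mirror $f$ and an elementary monodromy computation.

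\textbf{Step 1 (existence of the $K$-classes).} Standard Gauss--Manin / stationary-phase arguments show that, for any integration cycle $\Gamma$ of vanishing type, $\int_\Gamma e^{-tf(y)}\,dy/y$ is a flat section of the quantum $D$-module of $\P^{n-1}$. Iritani's mirror theorem \cite[Theorem~4.11]{Iritani09} applied to the toric manifold $\P^{n-1}$ identifies the $\Gg$-integral structure on $K^0(\P^{n-1})$ with the lattice of Lefschetz thimbles; concretely, the integral formula
\[
z^{(n-1)/2}\bigl(\phi,\,S(z)z^{-\mu}z^{c_1}\Gg_{\P^{n-1}}\Ch(E)\bigr)_{\P^{n-1}}
=\int_\Gamma \varphi(y,z)\,e^{-f/z}\,\frac{dy}{y}
\]
used in the proof of Theorem~\ref{thm:toric_GammaI} pairs each integration cycle $\Gamma$ with a unique integral $K$-class $E$. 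I would define $\cF_k(\phi)\in K^0(\P^{n-1})$ (resp.\ $\cG_k(\phi)$) as the class corresponding to $\Gamma_k(\phi)$ (resp.\ to $\Gamma_k^\vee(\phi)$ with the branch choice $t\mapsto e^{-\pi\iu}t$), so that both formulas in \eqref{eq:FG_oscint} hold by construction.

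\textbf{Step 2 (orthogonality).} Under the mirror identification the Euler pairing $\chi$ on $K^0(\P^{n-1})$ corresponds to the intersection pairing of Lefschetz thimbles in $(\C^\times)^{n-1}$; this follows from the HRR factorization \eqref{eq:factorize_HRR} together with the description of the Stokes pairing in terms of thimble intersections (cf.\ \cite{Iritani09}). Since $f(\Gamma_k(\phi))=v_k+\R_{\ge 0}e^{\iu\phi}$ and $f(\Gamma_l^\vee(\phi))=v_l-\R_{\ge 0}e^{\iu\phi}$ lie on distinct parallel lines whenever $k\neq l$ (by admissibility of $\phi$), the two thimbles are disjoint in that case. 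For $k=l$ they meet transversely in a single point, namely the critical point of $f$ with value $v_k$, with intersection number $+1$ by the orientation convention fixed above. Hence $\chi(\cF_k(\phi),\cG_l(\phi))=\delta_{kl}$.

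\textbf{Step 3 (identification $\cF_k(\phi)=\O(k)$ in the window).} The Laurent polynomial $f$ admits the cyclic symmetry $\sigma\colon y\mapsto e^{-2\pi\iu/n}y$ satisfying $f\circ\sigma=e^{-2\pi\iu/n}f$ and $\sigma^\star(dy/y)=dy/y$. Its $k$-th iterate carries $\Gamma_0(\phi+2\pi k/n)$ onto $\Gamma_k(\phi)$: it sends $(1,\dots,1)$ (critical value $n$) to the critical point with value $v_k$ and rotates the vanishing path by $-2\pi k/n$. Substituting $y=\sigma^k(y')$ therefore gives
\[
\int_{\Gamma_k(\phi)} e^{-tf(y)}\,\frac{dy}{y}
=\int_{\Gamma_0(\phi+2\pi k/n)} e^{-te^{-2\pi\iu k/n}f(y')}\,\frac{dy'}{y'}
= Z^{\P^{n-1}}\!\bigl(\cF_0(\phi+2\pi k/n)\bigr)(te^{-2\pi\iu k/n}).
\]
When $|\phi+2\pi k/n|<\pi/2+\pi/n$, this phase lies in the Stokes chamber around $0$ bounded by the rays from $v_0=n$ to $v_{\pm1}$, which point in directions $\mp(\pi/2+\pi/n)$ because $v_{\pm1}-v_0=-2\iu n\sin(\pi/n)e^{\mp\iu\pi/n}$; hence $\cF_0(\phi+2\pi k/n)=\cF_0(0)=\O_{\P^{n-1}}$ by the oscillatory-integral identity \eqref{eq:oscint_centralcharge}. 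Finally, the expansion \eqref{eq:J-function_P} yields $J_{\P^{n-1}}(te^{-2\pi\iu k/n})=e^{-2\pi\iu k h}J_{\P^{n-1}}(t)$, and the conjugation identity $e^{\pi\iu\mu}h\,e^{-\pi\iu\mu}=-h$ (from $[\mu,h]=h$) gives $[e^{-2\pi\iu k h}\alpha,\beta)=[\alpha,e^{2\pi\iu k h}\beta)$ for the pairing \eqref{eq:pairing_[)}; combined with $\Ch(\O(k))=e^{2\pi\iu k h}$, this proves $Z^{\P^{n-1}}(\O)(te^{-2\pi\iu k/n})=Z^{\P^{n-1}}(\O(k))(t)$ and hence $\cF_k(\phi)=\O(k)$.

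\textbf{Main obstacle.} The principal technical input is Step 1: obtaining \emph{integral} $K$-classes (not merely complex ones) from the oscillatory integrals requires the full strength of Iritani's toric mirror theorem identifying the $\Gg$-integral structure with the thimble lattice. Steps 2 and 3 are then essentially mechanical modulo careful bookkeeping of branches, orientations, and the monodromy identity for $J_{\P^{n-1}}$.
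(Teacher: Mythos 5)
Your proposal is correct and takes essentially the same route as the paper's proof: both rely on Iritani's identification of the $\Gg$-integral structure with the lattice of Lefschetz thimbles (\cite[Theorems 4.11, 4.14]{Iritani09}), and then use the $\Z/n$-symmetry $f(\lambda y)=\lambda f(y)$ for $\lambda^n=1$ together with the monodromy $J_{\P^{n-1}}(te^{-2\pi\iu k/n})=e^{-2\pi\iu kh}J_{\P^{n-1}}(t)$ to identify $\cF_k(\phi)=\O_{\P^{n-1}}(k)$ in the stated window. The extra explicit details you supply (the thimble-intersection count in Step 2 and the $e^{\pi\iu\mu}he^{-\pi\iu\mu}=-h$ bookkeeping in Step 3) are correct and merely spell out what the paper leaves implicit.
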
 
\begin{proof} 
This follows from the result in \cite{Iritani09,KKP08}. 
(See \S\ref{sec:toric} and \cite[\S 5]{GGI:gammagrass} 
for a closely related discussion). 
By \cite[Theorems 4.11, 4.14]{Iritani09}, we have an 
isomorphism 
\[
K^0(\P^{n-1}) \cong H_{n-1}( (\C^\times)^{n-1}, 
\{ y : \re(t f(y)) \ge M\} ), \quad 
\alpha \mapsto \Gamma(\alpha,\arg t)  
\]
depending on $\arg t\in \R$, such that 
$\Gamma(\alpha,\arg t)$ is Gauss--Manin flat with respect to 
the variation of $\arg t$ and that 
\begin{equation*} 
Z^{\P^{n-1}}(\alpha)(t) = \int_{\Gamma(\alpha,\arg t)} e^{-t f(y)} \frac{dy}{y}. 
\end{equation*} 
Here $M$ is a sufficiently big positive number (it is sufficient that 
$M \ge 2n|t|$). 
Moreover, this isomorphism 
intertwines the Euler pairing with 
the intersection pairing:  
\[
\chi(\alpha,\beta) = (-1)^{\frac{(n-1)(n-2)}{2}}
\sharp\left( \Gamma(\alpha, \arg t + \pi) 
\cap \Gamma(\beta, \arg t)\right).  
\]
Therefore equation \eqref{eq:FG_oscint} and part (1) of the 
proposition hold for $\cF_k, \cG_k$ such that 
\[
\Gamma(\cF_k(\phi),-\phi) = \Gamma_k(\phi), \quad 
\Gamma(\cG_k(\phi),-\phi - \pi) = \Gamma_k^\vee(\phi). 
\]
(We only need to check \eqref{eq:FG_oscint} at 
$\arg t = -\phi$; it then follows by analytic continuation 
for other $t\in (-\phi-\frac{\pi}{2},-\phi+\frac{\pi}{2})$.) 
Recall from \eqref{eq:oscint_centralcharge} that we have 
\[
Z^{\P^{n-1}}(\O_{\P^{n-1}})(t) 
= \int_{\Gamma_0(0)} e^{-tf(y)} \frac{dy}{y} 
\]
when $\arg t =0$. 
Therefore, when $\arg t = 2\pi k/n$, we have 
by setting $t' = e^{-2\pi \iu k/n} t$, 
\begin{align*} 
Z^{\P^{n-1}}(\O_{\P^{n-1}}(k))(t)
& = Z^{\P^{n-1}}(\O_{\P^{n-1}})(t') \\
& = \int_{\Gamma_0(0)} e^{-t'  f(y)} \frac{dy}{y} 
= \int_{\Gamma_k(-2\pi k/n)} 
e^{-t f(y)} \frac{dy}{y} 
\end{align*} 
where in the first line we used \eqref{eq:J-function_P} 
and the definition of the central charge, and 
in the second line we performed the change 
$y_i \to e^{2\pi\iu k/n} y_i$ of variables 
and used the fact that $e^{-2\pi\iu k/n} \Gamma_0(0) 
= \Gamma_k(-2\pi k/n)$. 
From this it follows that 
\[
Z^{\P^{n-1}}(\O_{\P^{n-1}}(k))(t) = 
\int_{\Gamma_k(\phi)} e^{-t f(y)} \frac{dy}{y} 
\]
when 
$|\phi+ k \frac{2 \pi}{n}|< \frac{\pi}{2} + \frac{\pi}{n}$ 
and $\arg t = -\phi$.  
This implies $\cF_k(\phi) = \O_{\P^{n-1}}(k)$ 
for such $\phi$ and $k$ and part (2) follows. 
%
%
%
%
%
%
\end{proof} 

\begin{remark} 
The sign $(-1)^{(n-1)(n-2)/2}$ in the intersection pairing 
was missing in \cite{Iritani09}, and this has been corrected in 
\cite[footnote 16]{Iritani11}.
\end{remark}

\begin{remark} 
\label{rem:exceptional_collection_P}
Part (2) of the proposition determines roughly half of $\cF_k(\phi)$'s. 
The whole collection $\{\cF_k(\phi) : k\in \Z/n\Z \}$ is obtained from the 
exceptional collection $\{ \O(k) : -\phi-\pi < k \frac{2\pi}{n} <-\phi+\pi\}$ 
by a sequence of mutations (see \cite[\S 5]{GGI:gammagrass}), 
and thus is an exceptional collection. 
Because $\cF_k(\phi)$ is mirror to the Lefschetz thimble $\Gamma_k(\phi)$, 
the set $\{\Gg_{\P^{n-1}}\Ch(\cF_k(\phi)): k\in \Z/n\Z\}$ 
gives the asymptotic basis of $\P^{n-1}$ 
at phase $\phi$ (see the proof of Theorem \ref{thm:toric_GammaII})  
and the Gamma conjecture II (\S \ref{subsec:asymp_basis_GII}) 
holds for $\P^{n-1}$. 
\end{remark}

The Hori--Vafa mirror $\tg$ of $\P = (\P^{n-1})^r $ is given by: 
\begin{align*} 
\tg(y) & := f(\vec{y}_1) + f(\vec{y}_2) + \cdots + f(\vec{y}_r) \\ 
& = \sum_{i=1}^r \left(y_{i,1} + y_{i,2} + \cdots + 
y_{i,n-1} + \frac{1}{y_{i,1} y_{i,2} \cdots y_{i,n-1}} 
\right) 
\end{align*} 
where $y =(\vec{y}_1,\dots,\vec{y}_r) \in (\C^\times)^{r(n-1)}$ 
and $\vec{y}_i = (y_{i,1},\dots,y_{i,n-1})\in (\C^\times)^{n-1}$. 
Critical values of $\tg$ are given by 
\[
\tv_K:= 
v_{k_1} + \cdots + v_{k_r} \quad \text{for } K=(k_1,\dots, k_r) 
\in \Z^r.  
\]
The product
$\Gamma_{k_1}(\phi) \times \cdots \times \Gamma_{k_r}(\phi)$
of Lefschetz thimbles for $f$ 
gives a Lefschetz thimble for $\tg$ associated to 
the critical value $\tv_K$ 
and the path $\tv_K + \R_{\ge 0} e^{\iu\phi}$.  

The Hori--Vafa mirror of $\G$ is obtained 
from the mirror of $\P$ by shifting the phase by $(r-1)\pi/n$ and 
restricting to ``anti-symmetrized''  Lefschetz thimbles 
(see \cite{Hori-Vafa, Ue05,KimSab}). 
We set 
\[
g(y) := \xi \tg(y) 
\]
with $\xi = e^{\pi \iu (r-1)/n}$. 
Then critical values of $g(y)$ are 
\[
v_K := \xi \tv_K 
\quad \text{for }K = (k_1,\dots,k_r) \in \Z^r. 
\]
For a tuple $K = (k_1,\dots,k_r)\in \Z^r$ and $\phi \in \R$, 
we define the anti-symmetrized Lefschetz thimble for $g(y)$ as: 
\begin{align*} 
\Gamma_K(\phi) & := \Gamma_{k_1}(\phi') 
\wedge \cdots \wedge \Gamma_{k_r}(\phi') 
= \sum_{\sigma\in \frS_r} 
\sgn(\sigma) \Gamma_{k_{\sigma(1)}}(\phi') 
\times \cdots \times \Gamma_{k_{\sigma(r)}}(\phi') \\ 
\Gamma^\vee_K(\phi) & := 
\Gamma^\vee_{k_1}(\phi') \wedge \cdots 
\wedge \Gamma^\vee_{k_r}(\phi') 
= \sum_{\sigma \in \frS_r} 
\sgn(\sigma) \Gamma_{k_{\sigma(1)}}^\vee(\phi') 
\times \cdots \times \Gamma_{k_{\sigma(r)}}^\vee(\phi')  
\end{align*} 
with $\phi' = \phi - \frac{(r-1)\pi}{n}$. 
They are elements of the relative homology groups 
\[
H_{r(n-1)}((\C^\times)^{r(n-1)}, \{y : \pm \re(e^{-\iu\phi}g(y)) \ge M\})
\]
with $M$ sufficiently large. 
We also define $K$-classes $\cF_K(\phi), \cG_K(\phi) \in K^0(\G)$ as:  
\begin{align} 
\label{eq:FG_K} 
\begin{split} 
\cF_K(\phi) & := \frac{1}{r!} p_\star\left(\cR \otimes \iota^\star 
(\cF_{k_1}(\phi') \wedge 
\cdots \wedge \cF_{k_r}(\phi')) \right) \\ 
\cG_K(\phi) & := \frac{1}{r!} p_\star \left(\cR \otimes \iota^\star 
(\cG_{k_1}(\phi') \wedge \cdots \wedge \cG_{k_r}(\phi')) 
\right)
\end{split} 
\end{align} 
with $\phi' = \phi - \frac{(r-1)\pi}{n}$, where $\cF_k(\phi)$ and 
$\cG_k(\phi)$ are as in Proposition \ref{prop:cc_Pn-1}. 
Note that $\{\cF_K(\phi)\}_K$ or $\{\cG_K(\phi)\}_K$ gives 
an integral basis of $K^0(\G)$ by Proposition \ref{prop:comparison_coh_K}. 
\begin{remark} 
By the change of variables $y_{i,j} = \xi y'_{i,j}$, 
we can also write: 
\[
g(y) = \sum_{i=1}^r \left( y'_{i,1} + y'_{i,2} + 
\cdots + y'_{i,n-1} + \frac{(-1)^{r-1}}{y'_{i,1}y'_{i,2} \cdots y'_{i,n-1}} 
\right) 
\]
\end{remark} 

\begin{theorem} 
\label{thm:oscint_G}
Let $\phi\in \R$ be such that $\phi' = \phi- \frac{(r-1)\pi}{n}$ 
is admissible for $\{v_0,v_1,\dots,v_{n-1}\}$. 
For a mutually distinct $r$-tuple 
$K = (k_1,\dots,k_r) \in (\Z/n\Z)^r$, we have  
\begin{align*} 
Z^\G(\cF_K(\phi))(t) & = \frac{1}{r!} 
\left(\frac{-\xi t}{2\pi\iu n}\right)^{\binom{r}{2}}
\int_{\Gamma_K(\phi)} \frac{dy}{y} e^{-t g(y)} 
\cdot 
\prod_{i<j} (f(\vec{y}_i) - f(\vec{y}_j))  \\ 
Z^\G(\cG_K(\phi))(e^{-\pi\iu} t) & = 
\frac{1}{r!} 
\left(\frac{\xi t}{2\pi\iu n}\right)^{\binom{r}{2}}
\int_{\Gamma^\vee_K(\phi)} 
\frac{dy}{y} e^{t g(y)} \cdot 
\prod_{i<j} (f(\vec{y}_i) - f(\vec{y}_j)) 
\end{align*} 
for $|\arg t + \phi|<\frac{\pi}{2}$,  
where $\xi := e^{\pi\iu(r-1)/n}$ and 
$\frac{dy}{y} := \bigwedge_{i=1}^r \bigwedge_{j=1}^{n-1}
\frac{dy_{i,j}}{y_{i,j}}$. 
Moreover, we have 
\[
\chi(\cF_K(\phi), \cG_L(\phi)) =\delta_{K,L} 
\]
when the tuples $K$, $L$ are ordered with respect to 
a fixed choice of a total order of $\Z/n\Z$. 
\end{theorem}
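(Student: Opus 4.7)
The plan is to assemble the two oscillatory-integral formulas by combining the abelian/non-abelian central-charge comparison of Proposition \ref{prop:cc_ANA} with the $\P^{n-1}$ oscillatory integrals of Proposition \ref{prop:cc_Pn-1}, and then to deduce the pairing identity from Proposition \ref{prop:pairing_ANA}. First, I would unfold the definition \eqref{eq:FG_K}: since $\cF_K(\phi) = \frac{1}{r!} p_\star(\cR \otimes \iota^\star \alpha)$ with $\alpha := \cF_{k_1}(\phi') \wedge \cdots \wedge \cF_{k_r}(\phi')$ and $\phi' = \phi - (r-1)\pi/n$, linearity of the central charge together with Proposition \ref{prop:cc_ANA} reduces the first formula to computing
$$
\left[\prod_{i<j}(\theta_i - \theta_j)\, Z^\P(\alpha) \right]_{t_1 = \cdots = t_r = \xi t}.
$$

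Next, I would rewrite $Z^\P(\alpha)$ as a single oscillatory integral. Using the determinantal structure of $Z^\P$ on $\wedge^r K^0(\P^{n-1})$ together with the first identity of Proposition \ref{prop:cc_Pn-1}, each entry of the determinant is an integral over $\Gamma_{k_i}(\phi')$; expanding the determinant as a signed sum over $\frS_r$ and renaming integration variables in each summand produces precisely the antisymmetrized thimble $\Gamma_K(\phi)$ as integration cycle, so that
$$
Z^\P(\alpha)(t_1,\ldots,t_r) = \int_{\Gamma_K(\phi)} e^{-\sum_j t_j f(\vec{y}_j)}\, \frac{dy}{y}.
$$
I would then move $\prod_{i<j}(\theta_i - \theta_j)$ under the integral sign; since $\theta_j$ acts on the integrand by multiplication by $-t_j f(\vec{y}_j)$, the operator produces the factor $(-1)^{\binom{r}{2}} \prod_{i<j}(t_i f(\vec{y}_i) - t_j f(\vec{y}_j))$. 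Specializing $t_1 = \cdots = t_r = \xi t$ and using $g = \xi \tg$ collapses the prefactor to $(-\xi t)^{\binom{r}{2}}$, and dividing by $r!(2\pi\iu n)^{\binom{r}{2}}$ yields the stated formula. The $\cG_K$ identity follows from the identical argument, evaluating everything at the argument $e^{-\pi\iu} t$ and invoking the second identity of Proposition \ref{prop:cc_Pn-1}: this flips the sign of the exponent and changes $(-\xi t)^{\binom{r}{2}}$ to $(\xi t)^{\binom{r}{2}}$ since no extra sign from $(-1)^{\binom{r}{2}}$ arises in that case.

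For the pairing identity, I would invoke Proposition \ref{prop:pairing_ANA}: the comparison map $(r!)^{-1} p_\star(\cR \otimes \iota^\star(\cdot))$ intertwines the Euler pairings on $\wedge^r K^0(\P^{n-1})$ and $K^0(\G)$. Hence
$$
\chi\bigl(\cF_K(\phi), \cG_L(\phi)\bigr) = \det\bigl(\chi_{\P^{n-1}}(\cF_{k_i}(\phi'), \cG_{l_j}(\phi'))\bigr)_{1\le i,j\le r} = \det(\delta_{k_i, l_j})_{i,j}
$$
by part (1) of Proposition \ref{prop:cc_Pn-1}. If $K$ and $L$ are both ordered with respect to a fixed total order on $\Z/n\Z$ and the $k_i$ (resp. $l_j$) are mutually distinct, this determinant equals $\delta_{K,L}$: when $K = L$ the matrix is the identity, and otherwise some row (or column) vanishes.

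I expect the main technical point to be the exchange of the differential operator $\prod_{i<j}(\theta_i - \theta_j)$ with the integral over $\Gamma_K(\phi)$, followed by the specialization at the diagonal $t_1 = \cdots = t_r = \xi t$. Both steps reduce to uniform convergence and uniform differentiability of the oscillatory integrals for $(t_1,\ldots,t_r)$ in a neighborhood of that diagonal, which is guaranteed by the admissibility of $\phi'$ for $\{v_0,\ldots,v_{n-1}\}$ and the exponential decay of $e^{-t_j f(\vec{y}_j)}$ along $\Gamma_{k_i}(\phi')$ in the sector $|\arg t_j + \phi'| < \pi/2$. A secondary bookkeeping issue is to keep track of the orientations chosen in the definition of $\Gamma_K(\phi)$ and $\Gamma_K^\vee(\phi)$ so that the signs in the prefactors $(\mp \xi t /(2\pi\iu n))^{\binom{r}{2}}$ come out correctly.
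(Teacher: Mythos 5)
Your proposal follows the paper's route almost exactly: unfold the definition of $\cF_K(\phi)$, apply Proposition~\ref{prop:cc_ANA} to pass to the wedge-product central charge on $\P$, use the determinantal form of $Z^\P$ together with Proposition~\ref{prop:cc_Pn-1} to rewrite everything as a single oscillatory integral over $\Gamma_K(\phi)$, push the differential operator under the integral, specialize at the diagonal, and finally derive the pairing from Proposition~\ref{prop:pairing_ANA} and part~(1) of Proposition~\ref{prop:cc_Pn-1}.

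There is, however, one real gap in the key computational step. You assert that since ``$\theta_j$ acts on the integrand by multiplication by $-t_j f(\vec{y}_j)$, the operator produces the factor $(-1)^{\binom{r}{2}}\prod_{i<j}(t_i f(\vec{y}_i)-t_j f(\vec{y}_j))$.'' This would follow immediately if the $\theta_i$ acted simply as scalar multiplications, but they do not: $\theta_i$ acts as multiplication by $-t_i f(\vec{y}_i)$ only on the pure exponential, and each $\theta_i$ appears $(r-1)$ times in $\prod_{i<j}(\theta_i-\theta_j)$. Once one factor has already produced a $t_i$-dependent polynomial, subsequent applications of $\theta_i$ generate lower-order cross terms (e.g.\ $\theta_i^2\,e^{\alpha_i t_i}=(\alpha_i t_i+\alpha_i^2 t_i^2)e^{\alpha_i t_i}$, not $\alpha_i^2 t_i^2 e^{\alpha_i t_i}$). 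The correct statement is exactly Lemma~\ref{lem:antisym_derivation} in the paper: one shows that $\prod_{i<j}(\theta_i-\theta_j)e^{\sum\alpha_i t_i}=\varphi(\alpha_1 t_1,\dots,\alpha_r t_r)e^{\sum\alpha_i t_i}$ for some polynomial $\varphi$ whose top-degree part is the Vandermonde, and then that $\varphi$ must be anti-symmetric and hence divisible by --- and therefore equal to --- the Vandermonde. This is precisely the mechanism that kills the lower-order cross terms. Your final answer is correct, but your justification of this step is incomplete; you should either cite the lemma or reproduce its anti-symmetry argument. You also flag the ``main technical point'' as the interchange of differentiation and integration, which is relatively routine; the substantive algebraic issue is the one just described.

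The remaining parts of your proposal (the sign bookkeeping for the $\cG_K$ formula, and the reduction of $\chi(\cF_K(\phi),\cG_L(\phi))=\delta_{K,L}$ to a $\delta$-determinant via the pairing-preserving isomorphism) are correct and match the paper.
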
 
\begin{proof} 
Combining Proposition~\ref{prop:cc_ANA} and 
Proposition~\ref{prop:cc_Pn-1}, we have that 
\begin{align*} 
Z^\G(\cF_K(\phi))(t) & = 
\frac{1}{r! (2\pi\iu n)^{\binom{r}{2}}} 
\left[ \prod_{i<j}(\theta_i - \theta_j) \cdot 
Z^\P(\cF_{k_1}(\phi') \wedge \cdots \wedge \cF_{k_r}(\phi') ) 
\right]_{t_1=\cdots = t_r = \xi t} \\
& = \frac{1}{r! (2\pi\iu n)^{\binom{r}{2}}} 
\left[ \prod_{i<j}(\theta_i - \theta_j) \cdot 
\int_{\Gamma_K(\phi)} 
e^{- (t_1 f(\vec{y}_1)  + \cdots +  t_r f(\vec{y}_r)) }
\frac{dy}{y} \right]_{t_1 = \cdots = t_r = \xi t} \\
& 
= \frac{1}{r!}\left( \frac{-\xi t}{2\pi\iu n}\right)^{\binom{r}{2}} 
\int_{\Gamma_K(\phi)} 
\frac{dy}{y}
e^{-t g(y)} \prod_{i<j} (f(\vec{y}_i) - f(\vec{y}_j)), 
\end{align*} 
where in the last line, we used Lemma \ref{lem:antisym_derivation} 
below. 
The formula for $Z^\G(\cG_K(\phi))(e^{-\pi\iu}t)$ follows similarly. 
The orthogonality $\chi(\cF_K(\phi), \cG_L(\phi)) = \delta_{K,L}$ 
follows easily from Propositions \ref{prop:pairing_ANA} 
and \ref{prop:cc_Pn-1}. 
\end{proof} 

\begin{lemma} 
\label{lem:antisym_derivation} 
We have 
\[
\left(\prod_{i<j} (\theta_i - \theta_j) \right) e^{\sum_{i=1}^r \alpha_i t_i} 
=  e^{\sum_{i=1}^r \alpha_i t_i} \prod_{i<j}(\alpha_i t_i -\alpha_j t_j)
\]
where $\theta_i = t_i\parfrac{}{t_i}$. 
\end{lemma}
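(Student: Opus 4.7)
The plan is to recognise the antisymmetric operator on the left-hand side as a Vandermonde determinant of commuting operators and reduce the identity to the classical numerical Vandermonde. Since the operators $\theta_1,\dots,\theta_r$ mutually commute, the standard Vandermonde identity gives
\[
\prod_{i<j}(\theta_i - \theta_j) \;=\; \det\bigl(\theta_i^{r-j}\bigr)_{1\le i,j\le r}
\]
as a differential operator acting on functions of $t_1,\dots,t_r$.

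Next I would establish the identity $\theta^{(k)} e^{\alpha t} = (\alpha t)^k e^{\alpha t}$ for the falling factorial operator $\theta^{(k)} := \theta(\theta-1)\cdots(\theta-k+1)$. This follows by induction on $k$: the base case $k=0$ is trivial, and assuming the identity for $k$, we have
\[
\theta^{(k+1)} e^{\alpha t} \;=\; (\theta - k)\,\theta^{(k)} e^{\alpha t} \;=\; (\theta - k)(\alpha t)^k e^{\alpha t} \;=\; (\alpha t)^{k+1} e^{\alpha t},
\]
using $\theta\bigl((\alpha t)^k e^{\alpha t}\bigr) = k(\alpha t)^k e^{\alpha t} + (\alpha t)^{k+1} e^{\alpha t}$. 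Since $\theta^{(k)}$ is a monic polynomial in $\theta$ of degree $k$, the change of basis from $\{\theta_i^{r-j}\}_{j=1}^r$ to $\{\theta_i^{(r-j)}\}_{j=1}^r$ inside $\C[\theta_i]$ is upper triangular with unit diagonal, and the same upper triangular transformation works for every row index $i$; consequently $\det(\theta_i^{r-j}) = \det(\theta_i^{(r-j)})$ as operators.

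Finally, I would expand the operator determinant as a signed sum over $\frS_r$ and apply it to $e^{\sum_l \alpha_l t_l} = \prod_l e^{\alpha_l t_l}$. Since $\theta_i$ acts only on the factor $e^{\alpha_i t_i}$, the falling factorial identity yields
\[
\prod_{i=1}^r \theta_i^{(r-\sigma(i))}\cdot e^{\sum_l \alpha_l t_l} \;=\; \prod_{i=1}^r (\alpha_i t_i)^{r-\sigma(i)}\cdot e^{\sum_l \alpha_l t_l}
\]
for every $\sigma \in \frS_r$. Reassembling the signed sum produces $\det\bigl((\alpha_i t_i)^{r-j}\bigr)\cdot e^{\sum_l \alpha_l t_l}$, which by a second application of the numerical Vandermonde identity equals $\prod_{i<j}(\alpha_i t_i - \alpha_j t_j)\cdot e^{\sum_l \alpha_l t_l}$, as claimed. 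The argument is essentially bookkeeping; the only subtle point is ensuring that the column-reduction step respects the fact that each row of the operator matrix lives inside a single commutative polynomial ring $\C[\theta_i]$, which is automatic.
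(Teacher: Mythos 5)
Your proof is correct but takes a genuinely different, more computational route than the paper's. The paper's proof is a short structural argument: it observes that the left-hand side is of the form $\varphi(\alpha_1 t_1,\dots,\alpha_r t_r)\,e^{\sum_i \alpha_i t_i}$ for a polynomial $\varphi$ of degree at most $\binom{r}{2}$ whose top-degree homogeneous part is $\prod_{i<j}(\alpha_i t_i-\alpha_j t_j)$ (obtained by the leading-order substitution $\theta_i\mapsto \alpha_i t_i$), notes that $\varphi$ is antisymmetric under permutations of its arguments and hence divisible by the Vandermonde polynomial, and concludes by a degree count. Your argument instead rewrites $\prod_{i<j}(\theta_i-\theta_j)$ as the Vandermonde determinant of the commuting operators $\theta_i$, performs a unitriangular column change of basis from monomials $\theta_i^{k}$ to falling factorials $\theta_i^{(k)}$, and exploits the exact eigen-relation $\theta^{(k)}e^{\alpha t}=(\alpha t)^k e^{\alpha t}$ to evaluate each term of the Leibniz expansion, reassembling into a numerical Vandermonde at the end. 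Both approaches are sound. The paper's is shorter but relies on readers accepting the degree, leading-term, and antisymmetry claims; yours is longer but fully explicit and makes visible the falling-factorial mechanism that lies behind the cancellations the paper's divisibility argument only implies. The commutativity point you flag at the end — that each row of the operator matrix lives in a single commutative ring $\C[\theta_i]$, so the same unitriangular column transformation applies uniformly to all rows — is indeed the crucial hinge for the determinant manipulation, and you are right to call attention to it.
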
 
\begin{proof} 
The left-hand side can be written in 
the form $\varphi(\alpha_1t_1,\dots,\alpha_r t_r) e^{\sum_{i=1}^r \alpha_i t_i}$ 
for some polynomial $\varphi$ and the highest order term of $\varphi$ 
is $\prod_{i<j} (\alpha_i t_i - \alpha_j t_j)$. 
On the other hand, $\varphi$ is anti-symmetric 
in the arguments, and thus should be divisible 
by $\prod_{i<j} (\alpha_i t_i - \alpha_j t_j)$. 
This implies the lemma. 
\end{proof} 

\begin{remark} 
The collection $\{\cF_K(\phi)\}_K$, where $K$ ranges over distinct 
$r$ elements of $\Z/n\Z$, yields the asymptotic basis 
$\{\Gg_\G \Ch(\cF_K(\phi))\}_K$ of $\G$ at phase $\phi$. 
This follows either by studying mirror oscillatory integrals  
in more details or by combining \cite[Proposition 6.5.1]{GGI:gammagrass}, 
Remark \ref{rem:exceptional_collection_P} 
and Proposition \ref{prop:comparison_coh_K}. 
It follows from the deformation argument in \cite[\S 6]{GGI:gammagrass} 
that this is mutation equivalent to Kapranov's exceptional 
collection $\{\E_\mu: n-r \ge \mu_1 \ge \cdots \ge \mu_r \ge 0\}$ 
\cite{Kapranov83}. 
\end{remark} 

\subsection{Quantum periods via mirrors of Eguchi--Hori--Xiong, Rietsch and 
Marsh--Rietsch} 
In this section, using mirrors of Eguchi--Hori--Xiong \cite{EHX}, 
Rietsch \cite{Rietsch} and Marsh--Rietsch \cite{Marsh-Rietsch}, 
we show that the limit sup in \eqref{eq:Cauchy-Hadamard} can be 
replaced with the limit for Grassmannians. 

Eguchi--Hori--Xiong's mirror \cite{EHX} for Grassmannians 
is a Laurent polynomial in $r(n-r) = \dim \G$ variables. 
Let $X_{i,j}$ with $1\le i\le r$, $1\le j\le n-r$ be $r(n-r)$ independent 
variables. 
Define the Laurent polynomial $W$ in these 
variables by 
\[
W= \sum_{i=1}^r \sum_{j=1}^{n-r-1} 
\frac{X_{i,{j+1}}}{X_{i,j}} 
+ \sum_{j=1}^{n-r} \sum_{i=1}^{r-1} 
\frac{X_{i+1,j}}{X_{i,j}} 
+ \frac{1}{X_{1,n-r}} + \frac{1}{X_{r,1}}. 
\]
Batyrev--Ciocan-Fontanine--Kim--van-Straten 
\cite{BCFKvS} showed that the Newton polytope 
of $W$ equals the fan polytope of a toric degeneration 
of $\G = \Gr(r,n)$ and conjectured that 
$W$ is a weak Landau--Ginzburg model of $\G$. 
Recently, Marsh--Rietsch \cite{Marsh-Rietsch} proved this conjecture 
by constructing a compactification of the Eguchi--Hori--Xiong mirror 
and showing an isomorphism between the quantum connection 
and the Gauss--Manin connection. Indeed, the mirror of Marsh--Rietsch 
is a reinterpretation of the earlier construction by Rietsch \cite{Rietsch}. 
\begin{theorem}[\cite{Marsh-Rietsch}] 
The quantum period \eqref{eq:qperiod} 
$G_\G(t) = \Ang{[\pt],J_\G(t)}$ of the Grassmannian 
$\G$ is given by the constant term series of the Eguchi--Hori--Xiong mirror 
$W$, i.e.~$G_\G(t) = \sum_{i=0}^\infty \frac{1}{i!} 
\Const(W^i) t^i$. 
\end{theorem}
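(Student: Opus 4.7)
The plan is to connect the constant term series $\sum_i \Const(W^i)\, t^i/i!$ to the quantum period $G_\G(t) = \Ang{[\pt], J_\G(t)}$ via two independent mirror descriptions of $\G$: the Hori--Vafa mirror $g$ on $(\C^\times)^{r(n-1)}$, which is accessible via Theorem \ref{thm:oscint_G} and the abelian/non-abelian correspondence (Theorem \ref{thm:BCFK}), and the Eguchi--Hori--Xiong mirror $W$ on $(\C^\times)^{r(n-r)}$. The two routes I would pursue are: (i) pass between these two Laurent polynomial descriptions via an explicit birational map arising from a Gelfand--Cetlin toric degeneration and compare compact torus periods; or (ii) follow Marsh--Rietsch directly by compactifying the $W$-family inside a Langlands-dual partial flag variety and identifying its Gauss--Manin connection with the quantum connection of $\G$.

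For route (i), first I would replace the Lefschetz thimbles $\Gamma_K(\phi)$ of Theorem \ref{thm:oscint_G} by the compact torus $(S^1)^{r(n-1)}$, using that $[\pt]_\G$ corresponds, via the Satake identification (Proposition \ref{prop:comparison_coh_K}), to the anti-symmetric class $\frac{1}{r!} p_\star\iota^\star(x_1^{n-1}\wedge x_2^{n-2}\wedge \cdots \wedge x_r^{n-r})$ on $\P$; combined with Lemma \ref{lem:antisym_derivation} this yields an integral representation
\[
G_\G(t) = \frac{c_r\, t^{\binom{r}{2}}}{(2\pi\iu)^{r(n-1) - \binom{r}{2}}} \int_{(S^1)^{r(n-1)}} e^{tg(y)} \prod_{i<j}\bigl(f(\vec{y}_i) - f(\vec{y}_j)\bigr)\, \frac{dy}{y}
\]
for an explicit constant $c_r$. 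Next I would produce a birational map from the EHX torus onto an open dense subset of the Hori--Vafa torus that pulls $g$ back to $W$, using the Gelfand--Cetlin chart whose fan polytope equals the Newton polytope of $W$ (Batyrev--Ciocan-Fontanine--Kim--van-Straten). Under this map, the compact torus $(S^1)^{r(n-r)}$ pushes forward (against the Vandermonde-weighted integrand) to the cycle appearing above on the Hori--Vafa side, reducing the integral to $\frac{1}{(2\pi\iu)^{r(n-r)}}\int_{(S^1)^{r(n-r)}} e^{tW(X)}\, dX/X = \sum_{i\ge 0} \Const(W^i)\, t^i/i!$.

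The main obstacle is the cycle comparison in route (i): the birational map is not globally an isomorphism, and one must control the residues along the exceptional divisors. The Vandermonde vanishes on diagonals fixed by the $\frS_r$-action, which is what ultimately allows the integrand to descend, but making this rigorous requires the global mirror compactification of Marsh--Rietsch, who realize $W$ as the restriction of a Lie-theoretic superpotential to an open Richardson chart inside the Langlands-dual partial flag variety. Within that framework the identity reduces to the assertion that the EHX compact torus corresponds to the $B$-side cycle dual to the identity class of $\G$, which is a formal consequence of their isomorphism between Gauss--Manin and quantum $D$-modules. A more elementary alternative is to expand both sides as hypergeometric series (using the BCFK formula for $J_\G$ on one side and the multinomial expansion of $\Const(W^i)$ on the other) and verify equality term by term, reducing the claim to a Lindstr\"om--Gessel--Viennot-type enumeration of non-intersecting lattice paths.
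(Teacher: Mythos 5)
The paper does not prove this statement: it is quoted verbatim from Marsh--Rietsch, and the surrounding text in \S 8.6 merely summarizes their method (``constructing a compactification of the Eguchi--Hori--Xiong mirror and showing an isomorphism between the quantum connection and the Gauss--Manin connection''), which is precisely your route (ii). So to the extent your proposal identifies Marsh--Rietsch's compactification and the quantum-vs-Gauss--Manin isomorphism as the operative mechanism, you have correctly located the paper's (cited) argument.

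Route (i), however, does not constitute a proof and should not be presented as one. You acknowledge the central gap yourself: the birational relation between the Hori--Vafa torus $(\C^\times)^{r(n-1)}$ and the EHX torus $(\C^\times)^{r(n-r)}$ is not a morphism of the torus period integrals, and controlling the exceptional locus where the Vandermonde-weighted form acquires poles or fails to descend is exactly the non-trivial content of Marsh--Rietsch's global compactification. Indeed, the paper itself signals this immediately after the theorem, remarking that Proposition~\ref{prop:G_qperiod} (positivity and existence of the limit for the quantum period, obtained here from the EHX constant-term series) ``seems difficult to deduce from Hori--Vafa mirrors.'' Beyond the cycle comparison, your starting formula --- expressing $G_\G(t)$ as the $(S^1)^{r(n-1)}$ integral of $e^{tg}$ against the Vandermonde $\prod_{i<j}(f(\vec y_i)-f(\vec y_j))$ --- is itself an unproven intermediate claim: Theorem~\ref{thm:oscint_G} only identifies Lefschetz-thimble integrals with central charges $Z^\G(\cF_K)$, and $\Ang{[\pt],J_\G}$ corresponds, via $\PD([\pt])=\Gg_\G\Ch(\halpha)$, to a $K$-class $\halpha$ with $\Ch(\halpha)=\Gg_\G^{-1}$, not to an obviously distinguished compact-torus cycle on the Hori--Vafa side. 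The ``elementary'' term-by-term hypergeometric/Lindstr\"om--Gessel--Viennot alternative is likewise left undeveloped. In short: route (ii) is the correct pointer to the actual proof; route (i) and the combinatorial alternative are suggestive directions but each contains an unfilled gap that Marsh--Rietsch's theorem is being invoked to close.
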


It is easy to check that the constant term series of $W$ is of 
the form $\sum_{k=0}^\infty a_k t^{kn}$ with $a_k \neq 0$ 
for all $k\in \Z_{\ge 0}$ (see \cite[\S 5.2]{BCFKvS} for the explicit form). 
Therefore, by applying Lemma \ref{lem:limsup_lim},  we obtain 
the following. (It seems difficult to deduce this from 
Hori--Vafa mirrors). 

\begin{proposition} 
\label{prop:G_qperiod}
Let $G_\G(t) = \Ang{[\pt], J_\G(t)} = \sum_{k=0}^\infty G_{kn} t^{kn}$ 
be the quantum period of $\G$. Then $G_{kn}>0$ for all $k\in \Z_{\ge 0}$ 
and $\lim_{k\to\infty} \sqrt[kn]{(kn)! G_{kn}}$ exists. 
\end{proposition}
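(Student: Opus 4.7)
The plan is to derive Proposition \ref{prop:G_qperiod} as a direct application of Lemma \ref{lem:limsup_lim}, taking the Eguchi--Hori--Xiong Laurent polynomial $W$ as the weak Landau--Ginzburg model and $n$ (the Fano index of $\G=\Gr(r,n)$) as the index parameter. The needed inputs are all available from the Marsh--Rietsch theorem recalled just before the statement, together with the explicit combinatorial form of the constant term series from \cite[\S 5.2]{BCFKvS}.

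First I would verify the hypotheses of Lemma \ref{lem:limsup_lim}. The Laurent polynomial $W$ is a sum of monomials with all coefficients equal to $+1$, so its coefficients are non-negative real numbers. By the Marsh--Rietsch theorem, $W$ is a weak Landau--Ginzburg model of $\G$ in the sense of Przyjalkowski, i.e.\ $G_\G(t) = \sum_{i=0}^\infty \frac{1}{i!}\Const(W^i)\, t^i$. Since $\G$ has Fano index $n$, every power $t^i$ appearing in $J_\G(t)$ has $n \mid i$; equivalently $\Const(W^i) = 0$ unless $n\mid i$, so in fact $G_\G(t) = \sum_{k\ge 0} G_{kn}t^{kn}$ with $G_{kn} = \Const(W^{kn})/(kn)!$. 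Finally, the statement preceding the proposition observes (citing \cite[\S 5.2]{BCFKvS}) that $\Const(W^{kn})\ne 0$ for \emph{every} $k\in \Z_{\ge 0}$; combined with non-negativity of the coefficients of $W$, this upgrades to $\Const(W^{kn})>0$.

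Putting these together, Lemma \ref{lem:limsup_lim} applies with Fano index $n$ in place of $r$ in the lemma's notation. The positivity conclusion $G_{kn}\ge 0$ provided by the lemma, together with $\Const(W^{kn})>0$, yields $G_{kn}>0$ for every $k$. The second conclusion of the lemma gives the existence of the limit
\[
\lim_{k\to\infty} \sqrt[kn]{(kn)!\, G_{kn}} \;=\; \lim_{k\to\infty} \sqrt[kn]{\Const(W^{kn})}.
\]

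The only delicate step is the non-vanishing $\Const(W^{kn})\ne 0$ for all $k$. This is the essential combinatorial content borrowed from \cite{BCFKvS}: the constant term of $W^{kn}$ counts, with positive weights, closed lattice paths of length $kn$ in the Newton polytope of $W$, and one exhibits at least one such path (for instance by going $k$ times around a single cycle in the underlying quiver) to see the count is positive. Once this non-vanishing is in hand, everything else is a direct invocation of the already-proved sub-multiplicativity/Fekete argument inside Lemma \ref{lem:limsup_lim}, so no further analysis is required.
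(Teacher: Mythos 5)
Your proof is correct and follows essentially the same route as the paper: both verify that the Eguchi--Hori--Xiong Laurent polynomial $W$ is a weak Landau--Ginzburg model of $\G$ with non-negative coefficients (by Marsh--Rietsch) whose constant terms $\Const(W^{kn})$ are non-zero for every $k$ (citing \cite[\S 5.2]{BCFKvS}), and then invoke Lemma~\ref{lem:limsup_lim}. You merely spell out the small upgrade from the lemma's non-negativity conclusion to strict positivity $G_{kn}>0$, which the paper leaves implicit.
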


\subsection{Ap\'ery constants}
In this section we prove 
Gamma conjecture I' (Conjecture \ref{conj:I_dash}) 
for $\G = \Gr(r,n)$. 

\begin{theorem} 
\label{thm:Grass_I_dash} 
The Grassmannian $\Gr(r,n)$ satisfies Gamma conjecture I'. 
\end{theorem}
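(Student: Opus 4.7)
The plan is to reduce Gamma conjecture I' to Gamma conjecture I via the Remark following Conjecture~\ref{conj:I_dash}, using Proposition~\ref{prop:G_qperiod} to supply the required limit, and then to prove Gamma conjecture I for $\G$ by identifying the flat section associated to $\O_\G$ through the oscillatory integral representation of Theorem~\ref{thm:oscint_G}.

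First I would verify Property~$\O$ for $\G$. By the abelian/non-abelian correspondence, the spectrum of $(c_1(\G)\star_0)$ coincides with the multiset of critical values $v_K = \xi\sum_{k\in K}v_k$ of $g$ on the anti-symmetric locus, indexed by $r$-element subsets $K\subset\Z/n\Z$. A direct calculation with $v_k = n e^{-2\pi\iu k/n}$ shows $|v_K|$ is maximized precisely on the $n$ cyclic arcs $K_\ell = \{\ell,\ell+1,\ldots,\ell+r-1\}\pmod n$, yielding the eigenvalues $v_{K_\ell} = e^{-2\pi\iu\ell/n}T$ with $T = n\sin(\pi r/n)/\sin(\pi/n)$. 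Since the Fano index of $\G$ is $n$ and the value $T$ itself is attained uniquely at $K_0$, both parts of Property~$\O$ hold.

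Next I would prove Gamma conjecture I. By the duality $\Gr(r,n)\cong\Gr(n-r,n)$ I may assume $r\le n/2$, which forces the admissibility range $|k_i(2\pi/n)+\phi'|<\pi/2+\pi/n$ of Proposition~\ref{prop:cc_Pn-1}(2) to hold with slack for every $k_i\in\{0,1,\ldots,r-1\}$ at $\phi' = -(r-1)\pi/n$ (corresponding to $\phi=0$). Taking a small generic perturbation of $\phi=0$ if necessary, so that $\phi'$ is admissible for $\{v_0,\ldots,v_{n-1}\}$, Proposition~\ref{prop:cc_Pn-1}(2) gives $\cF_{k_i}(\phi')=\O_{\P^{n-1}}(k_i)$. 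Reordering $K_0=(0,1,\ldots,r-1)$ to the strictly decreasing tuple $(r-1,r-2,\ldots,0)$ contributes a sign $(-1)^{\binom{r}{2}}$, and Proposition~\ref{prop:comparison_coh_K}(2) with $\mu=(0,\ldots,0)$ yields $\cF_{K_0}(\phi) = \pm\,\O_\G$. Theorem~\ref{thm:oscint_G} then expresses, up to this sign,
\[
Z^\G(\O_\G)(t) = \pm\frac{1}{r!}\left(\frac{-\xi t}{2\pi\iu n}\right)^{\binom{r}{2}}\int_{\Gamma_{K_0}(\phi)} e^{-tg(y)}\prod_{i<j}\bigl(f(\vec y_i)-f(\vec y_j)\bigr)\frac{dy}{y}.
\]
By $\frS_r$-symmetry of $g$ and antisymmetry of the Vandermonde, the integral over the anti-symmetrized thimble reduces to $r!$ times an ordinary integral over the product thimble $\Gamma_{k_1}(\phi')\times\cdots\times\Gamma_{k_r}(\phi')$, which carries a unique non-degenerate critical point of $g$ with value $v_{K_0}=T$ and with the Vandermonde prefactor $\prod_{i<j}(v_{k_i}-v_{k_j})\neq 0$. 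The standard Laplace method on this $r(n-1)$-dimensional integral produces leading asymptotics $\sim C_0\,t^{-r(n-1)/2}\,e^{-Tt}$ with $C_0\ne 0$; combining with the $t^{\binom{r}{2}}$ prefactor and using $\binom{r}{2}-r(n-1)/2 = -r(n-r)/2 = -\dim\G/2$ gives
\[
Z^\G(\O_\G)(t) \sim C\,t^{-\dim\G/2}\,e^{-Tt}, \qquad t\to+\infty,
\]
for some $C\ne 0$. Setting $t=1/z$ and invoking Remark~\ref{rem:centralcharges}, this places the flat section $S(z)z^{-\mu}z^{c_1(\G)}\Gg_\G$ in the one-dimensional space $\AA$ of smallest-asymptotic flat sections (Proposition~\ref{prop:dimAA_is_one}); hence $\Gg_\G$ is proportional to $A_\G$, and normalization yields $A_\G=\Gg_\G$, which is Gamma conjecture~I for $\G$. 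Finally, Proposition~\ref{prop:G_qperiod} gives existence of $\lim_{k\to\infty}\sqrt[kn]{(kn)!\,G_{kn}}$, and by \eqref{eq:Cauchy-Hadamard} this limit equals $T$ under Property~$\O$, so the Remark after Conjecture~\ref{conj:I_dash} yields Gamma conjecture~I'.

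The main obstacle is the careful bookkeeping in the second step: the identification $\cF_{K_0}(\phi)=\pm\O_\G$ involves reconciling sign conventions for wedges together with the admissibility range in Proposition~\ref{prop:cc_Pn-1}(2), and the Laplace analysis on the anti-symmetrized Lefschetz thimble must be executed with attention to orientation so that the leading constant is verifiably non-zero. The duality $\Gr(r,n)\cong\Gr(n-r,n)$ is crucial for fitting all $k_i$ into the admissibility window, and the $\frS_r$-reduction to a product thimble streamlines the otherwise delicate stationary-phase computation.
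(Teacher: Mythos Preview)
Your overall strategy---prove Property~$\O$ and Gamma conjecture~I for $\G$, then deduce I' via the Remark after Conjecture~\ref{conj:I_dash} using Proposition~\ref{prop:G_qperiod}---is a legitimate route, but it is \emph{not} the route the paper takes. The paper proves I' directly: it expands $\langle\alpha,J_\G(t)\rangle$ using the dual bases $\{\cF_K\},\{\cG_K\}$ and Theorem~\ref{thm:oscint_G} to obtain an oscillatory-integral representation, rewrites it as a Laplace transform of period integrals $P_K(\lambda)$, applies a further Laplace transform in $t$ to get a Cauchy-type integral $\sum_K\chi(\cF_K,\halpha)\int P_K(\lambda)(u-\lambda)^{-\binom{r}{2}-1}d\lambda$, and reads off the convergence radius of $\sum_k(kn)!\langle\alpha,J_{kn}\rangle u^{-kn-1}$ from the locations of the branch cuts $v_K-e^{i\phi}\R_{\ge 0}$. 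The key point is that when $\langle\alpha,\Gg_\G\rangle=0$ one has $\chi(\cF_{K_0},\halpha)=\chi(\O_\G,\halpha)=0$, so the nearest singularity at $u=T$ disappears and the convergence radius strictly exceeds $1/T$; combined with Proposition~\ref{prop:G_qperiod} this forces the Ap\'ery ratio to vanish. This argument never invokes Property~$\O$ or Gamma conjecture~I.

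Your proposal, by contrast, has a genuine gap in the proof of Gamma conjecture~I. The Laplace asymptotics you derive for the oscillatory integral gives $Z^\G(\O_\G)(t)\sim C\,t^{-\dim\G/2}e^{-Tt}$, which by Remark~\ref{rem:centralcharges} controls only the single component $(1,S(z)z^{-\mu}z^{c_1(\G)}\Gg_\G)$ of the flat section. Proposition~\ref{prop:dimAA_is_one} does not say that this alone places the full section in $\AA$; membership in $\AA$ is a condition on the \emph{norm}. Compare the proofs of Theorems~\ref{thm:toric_GammaI} and~\ref{thm:toriccomp_GammaI}, where integral representations are obtained for \emph{all} components $(\phi,S(z)z^{-\mu}z^{c_1}\Gg)$ before concluding. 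Your gap is fillable---for instance, using semisimplicity of the quantum cohomology of $\G$ one can expand the flat section in the asymptotic basis $\{y_j^\phi\}$ and use $(1,\Psi_j)=\sqrt{(\psi_j,\psi_j)}\neq 0$ together with Property~$\O$ to argue that a nonzero $e^{-T/z}$ asymptotic in the $1$-component forces all coefficients with $\re(u_j)<T$ to vanish---but this argument is nontrivial and you have not supplied it. You also assert that the spectrum of $(c_1(\G)\star_0)$ equals $\{v_K\}$; this is true but is not a consequence of Theorem~\ref{thm:BCFK} alone and should be justified (e.g.\ via \cite{GGI:gammagrass} or the Marsh--Rietsch isomorphism).
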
 

The rest of the paper is devoted to the proof of Theorem \ref{thm:Grass_I_dash}. 
Since $\Gr(r,n) \cong \Gr(n-r, n)$, we may assume that 
$r \le n/2$. 
We fix a sufficiently small phase $\phi>0$ 
such that $\phi' = \phi - \frac{(r-1)\pi}{n}$ 
is admissible for $\{v_0,v_1,\dots,v_{n-1}\}$. 
Let $\Lambda$ denote the index set of 
mutually distinct $r$-tuples $K$ of elements of $\Z/n\Z$: 
\[
\Lambda= \{K = (k_1,\dots,k_r) \in (\Z/n\Z)^r 
: n-1 \ge k_1 > k_2 > \cdots > k_r \ge 0\}.  
\]
For $K\in \Lambda$, we write $\cF_K = \cF_K(\phi)$ 
and $\cG_K = \cG_K(\phi)$ for the elements in \eqref{eq:FG_K}. 

\begin{lemma}
\label{lem:longest}
Set $T := \max\{ |v_K| : K \in \Lambda\}$.  
\begin{enumerate}
\item We have
$
T= n \frac{\sin(\pi r/n)}{\sin(\pi/n)}$. 
\item If $T = |v_K|$ for $K\in \Lambda$, then $K$ is given 
by a consecutive $r$-tuple of elements in $\Z/n\Z$ 
and $v_K = T e^{-2\pi\iu k/n}$ for some $k\in \Z$. 
\item We have $T = v_{K_0}$ for $K_0 := (r-1,r-2,\dots,1,0)$. 
Moreover, $\cF_{K_0} = \O_\G$.  
\end{enumerate}
\end{lemma}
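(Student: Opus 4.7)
The plan is to treat (1) and (2) together via an extremal argument on sums of $n$-th roots of unity, and then deduce (3) by combining Proposition~\ref{prop:cc_Pn-1}(2) with Proposition~\ref{prop:comparison_coh_K}(2). The main technical point is the rearrangement step in (2).

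\textit{Parts (1) and (2).} I first compute $v_K$ when $\{k_1,\ldots,k_r\}=\{a,a+1,\ldots,a+r-1\}$ is a set of $r$ consecutive residues in $\Z/n\Z$. The geometric series identity
\[
\sum_{l=0}^{r-1} e^{-2\pi i(a+l)/n} \;=\; e^{-2\pi i a/n}\cdot\frac{1-e^{-2\pi ir/n}}{1-e^{-2\pi i/n}} \;=\; e^{-2\pi i a/n-\pi i(r-1)/n}\cdot\frac{\sin(\pi r/n)}{\sin(\pi/n)},
\]
combined with $v_K = \xi(v_{k_1}+\cdots+v_{k_r}) = n\xi\sum_j e^{-2\pi i k_j/n}$ and $\xi=e^{\pi i(r-1)/n}$, gives $v_K = n\,e^{-2\pi i a/n}\sin(\pi r/n)/\sin(\pi/n)$, so $T\ge n\sin(\pi r/n)/\sin(\pi/n)$. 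For the reverse inequality I maximize $|s_S|:=\bigl|\sum_{k\in S}e^{-2\pi i k/n}\bigr|$ over $r$-element subsets $S\subset\Z/n\Z$. For any $\alpha\in\R$ one has $|s_S|\ge\operatorname{Re}(e^{-i\alpha}s_S)=\sum_{k\in S}\cos(2\pi k/n+\alpha)$, with equality when $\alpha=\arg s_S$, hence
\[
\max_S |s_S| \;=\; \max_\alpha\;\max_S\;\sum_{k\in S}\cos(2\pi k/n+\alpha).
\]
For fixed $\alpha$ the inner maximum is attained by the $r$ indices $k$ whose angles $2\pi k/n+\alpha$ lie closest to $0\bmod 2\pi$; these form a set of $r$ cyclically consecutive residues. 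Together with the previous calculation, this proves $T=n\sin(\pi r/n)/\sin(\pi/n)$ and forces every maximizer $K$ to be consecutive, yielding $v_K = T\,e^{-2\pi i a/n}$ for the starting residue $a$.

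\textit{Part (3).} Part (1) with $a=0$ gives $v_{K_0}=T$. To identify $\cF_{K_0}$, I choose $\phi>0$ small (with $\phi'=\phi-(r-1)\pi/n$ admissible) so that $|2\pi k/n+\phi'|<\pi/2+\pi/n$ for every $k\in\{0,1,\ldots,r-1\}$. Indeed $2\pi k/n+\phi'=\phi+\pi(2k-r+1)/n$ ranges between $\phi\pm(r-1)\pi/n$, and the hypothesis $r\le n/2$ gives $(r-1)\pi/n<\pi/2$. Proposition~\ref{prop:cc_Pn-1}(2) then yields $\cF_k(\phi')=\O_{\P^{n-1}}(k)$ for each such $k$; substituting into \eqref{eq:FG_K} and applying Proposition~\ref{prop:comparison_coh_K}(2) with $k_i=r-i$ (so $\mu=(0,\ldots,0)$) gives $\cF_{K_0}=\E_{(0,\ldots,0)}=\O_\G$.

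\textit{Main obstacle.} The rearrangement argument in (2) is the key technical step: while the picture that an arc of $r$ consecutive $n$-th roots of unity maximizes the vector sum is intuitive, rigorously isolating consecutive tuples as the unique maximizers requires the swap-of-suprema above and some care in the degenerate cases when ties arise among the cosine values $\cos(2\pi k/n+\alpha)$.
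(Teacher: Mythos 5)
Your proof of part (3) is exactly the paper's argument, and for parts (1)--(2) you correctly reduce the question to the extremal problem for $r$-element sums of $n$-th roots of unity (the paper offers no detail here beyond ``follow easily''). The one gap is the tie issue you flag in (2): the swap-of-suprema produces \emph{a} consecutive maximizer, but does not by itself exclude a non-consecutive one when the $r$-th and $(r+1)$-th largest cosine values coincide. This is closed by a short exchange argument: let $S$ be any maximizer of $|s_S|:=\bigl|\sum_{k\in S}e^{-2\pi\iu k/n}\bigr|$, put $\alpha:=\arg s_S$, and pick any $j\in S$, $j'\notin S$. The set $S'':=(S\setminus\{j\})\cup\{j'\}$ still satisfies $|s_{S''}|\le T/n$, hence
\[
\cos(2\pi j'/n+\alpha)-\cos(2\pi j/n+\alpha)\;=\;\re\bigl(e^{-\iu\alpha}s_{S''}\bigr)-T/n\;\le\;0.
\]
If equality held, then $\re(e^{-\iu\alpha}s_{S''})=T/n=|s_{S''}|$ would force $e^{-\iu\alpha}s_{S''}=T/n$, hence $s_{S''}=s_S$, hence $e^{-2\pi\iu j'/n}=e^{-2\pi\iu j/n}$, contradicting $j\neq j'$ in $\Z/n\Z$. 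So the inequality is strict for every such pair, which says precisely that $S$ is the set of $r$ residues whose angles $2\pi k/n+\alpha$ lie strictly closest to $0$ modulo $2\pi$, i.e.\ a consecutive arc. With this addition your argument is complete.
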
 
\begin{proof} 
Parts (1) and (2) follow easily from the definition of $v_K$. 
To see Part (3), note that Proposition \ref{prop:cc_Pn-1} (2) 
gives $\cF_k(\phi') = \O_{\P^{n-1}}(k)$ for $0\le k\le r-1$ 
and $\phi' = \phi - \frac{(r-1)\pi}{n}$ (since $|\phi|$ is small 
and $r\le n/2$). 
Then Proposition \ref{prop:comparison_coh_K} (2) implies the conclusion. 
\end{proof}

Let $\alpha \in H_{2|\alpha|}(\G)$ be a homology class 
such that $c_1(\G) \cap \alpha =0$. 
We want to show that the limit formula (see \eqref{eq:Apery_limit}) 
\[
\lim_{n \to \infty} \frac{\Ang{\alpha,J_{kn}}}{
\Ang{[\pt],J_{kn}}} = \langle \alpha, \Gg_\G \rangle 
\]
holds where we set $J_\G(t) = e^{c_1(\G) \log t} \sum_{k=0}^\infty 
J_{kn} t^{kn}$. 
We start with noting that it suffices to show this formula 
when $\langle \alpha, \Gg_\G \rangle =0$. 
Indeed, we obtain the general case 
by applying the formula to 
$\alpha' = \alpha - \langle \alpha, \Gg_\G \rangle [\pt]$ 
(which satisfies $\langle \alpha', \Gg_\G \rangle =0$). 

Let $\halpha \in K^0(\G) \otimes \C$ be a complexified $K$-class 
such that 
\[
\PD(\alpha) = \Gg_\G \Ch(\halpha). 
\]
Then we have, using the definition of $Z^\G$ (see \eqref{eq:Z} and   
\eqref{eq:pairing_[)}), 
\begin{align*} 
\Ang{\alpha,J_\G(t)} & = \int_{\G}  J_\G(t) \cup \PD(\alpha) \\ 
& = (\iu)^{\dim \G - 2|\alpha|} 
\int_\G J_\G(t) \cup  e^{-\pi\iu\mu}e^{\pi \iu c_1(\G)} 
\PD(\alpha)  \\
& =  (-1)^{|\alpha|} (2\pi\iu)^{\dim \G} 
\left[J_\G(t), \Gg_\G \Ch(\halpha)\right) 
= (-1)^{|\alpha|} Z^\G(\halpha)(e^{-\pi\iu} t).  
\end{align*} 
Using the dual bases $\{\cF_K\}_{K\in \Lambda}$ 
and $\{\cG_K\}_{K\in \Lambda}$ of $K^0(\G)$, we can expand 
\[
\halpha = \sum_{K\in \Lambda} \chi(\cF_K, \halpha) \cG_K.  
\]
Thus by Theorem \ref{thm:oscint_G}, we obtain the integral 
representation for $\Ang{\alpha, J_\G(t)}$: 
\begin{equation} 
\label{eq:alpha_component_J}
\Ang{\alpha,J_\G(t)} = (-1)^{|\alpha|} c_{n,r}
\sum_{K\in \Lambda} \chi(\cF_K,\halpha) \cdot 
 t^{\binom{r}{2}} \int_{\Gamma^\vee_K(\phi)} 
\frac{dy}{y}  e^{t g(y)} \prod_{i<j} (f(\vec{y}_i) - f(\vec{y}_j)) 
\end{equation} 
for $|\arg t + \phi|<\pi/2$, 
where we set $c_{n,r} = \frac{1}{r!} (\xi/(2\pi \iu n))^{\binom{r}{2}}$.  
Let $C_K(\lambda)$ denote the ``anti-symmetrized'' vanishing cycle 
in the fiber $g^{-1}(\lambda)$: 
\[
C_K(\lambda) = \Gamma^\vee_K(\phi) \cap g^{-1}(\lambda)
= \sum_{\sigma \in \frS_r} 
\sgn(\sigma) C_{k_{\sigma(1)}}(\lambda) \times \cdots 
\times C_{k_{\sigma(r)}}(\lambda)  
\]
where $\lambda \in v_K - \R_{\ge 0} e^{\iu\phi}$ 
and $C_{k_i}(\lambda) =\Gamma_{k_i} (\phi') \cap 
f^{-1}(\xi^{-1} \lambda)$ is the vanishing cycle for $f$. 
We define the period integral $P_K(\lambda)$ as: 
\[
P_K(\lambda) := \int_{C_K(\lambda) \subset g^{-1}(\lambda)} 
\prod_{i<j} (f(\vec{y}_i) - f(\vec{y}_j)) 
\left. \frac{\bigwedge_{i=1}^r \bigwedge_{j=1}^{n-1} 
\frac{dy_{i,j}}{y_{i,j}}}{d g} \right|_{g^{-1}(\lambda)}. 
\]
Then we may rewrite \eqref{eq:alpha_component_J} as 
the Laplace transform of the period: 
\begin{equation} 
\label{eq:alpha_component_J_Laplaceperiod} 
\Ang{\alpha,J_\G(t)} = (-1)^{|\alpha|} 
c_{n,r} \sum_{K\in \Lambda} \chi(\cF_K, \halpha)  
\int_{v_K -e^{\iu\phi} \R_{\ge 0}} d\lambda 
\cdot t^{\binom{r}{2}}e^{t\lambda} P_K(\lambda). 
\end{equation} 
Applying the Laplace transformation 
\[
\varphi(t) \mapsto \int_0^\infty \varphi(t) e^{-u t} dt 
\]
to both sides of \eqref{eq:alpha_component_J_Laplaceperiod}, 
we obtain 
\begin{equation} 
\label{eq:Hilbert_trans} 
\sum_{k=0}^\infty (kn)! \Ang{\alpha,J_{kn}} u^{-kn-1}  
= 
(-1)^{|\alpha|} 
c_{n,r}' \sum_{K\in \Lambda} \chi(\cF_K, \halpha)  
\int_{v_K -e^{\iu\phi} \R_{\ge 0}} d\lambda 
\frac{P_K(\lambda)}{(u-\lambda)^{\binom{r}{2}+1}}
\end{equation} 
when $\re(u-v_K)>0$ for all $K\in \Lambda$, 
where $c_{n,r}' = \binom{r}{2}! c_{n,r}$. 
Note that the right-hand side can be analytically continued 
to a holomorphic function outside the branch cut: 
\[
\bigcup_{K\in \Lambda} v_K - e^{\iu\phi} \R_{\ge 0}. 
\]
See Figure \ref{fig:branchcut}. 
Moreover, this can be analytically continued to the universal 
cover of $\C \setminus \{ v_K : K \in \Lambda\}$. 
\begin{figure}[htb]
\begin{picture}(300,120)(50,0)

\put(117.3,60){\makebox(0,0){$\bullet$}} 
\put(100,70){\makebox(0,0){$\bullet$}} 
\put(82.6,60){\makebox(0,0){$\bullet$}}
\put(82.6,40){\makebox(0,0){$\bullet$}}
\put(100,30){\makebox(0,0){$\bullet$}}
\put(117.3,40){\makebox(0,0){$\bullet$}}

\put(134.6,50){\makebox(0,0){$\bullet$}}
\put(117.3,80){\makebox(0,0){$\bullet$}}
\put(82.6,80){\makebox(0,0){$\bullet$}}
\put(65.4,50){\makebox(0,0){$\bullet$}}
\put(82.6,20){\makebox(0,0){$\bullet$}}
\put(117.3,20){\makebox(0,0){$\bullet$}}

\put(100,50){\makebox(0,0){$\bullet$}}

\put(145,50){\makebox(0,0){$T$}}

\put(117.3,60){\line(-9,-1){107.3}} 
\put(100,70){\line(-9,-1){90}} 
\put(82.6,60){\line(-9,-1){72.6}}
\put(82.6,40){\line(-9,-1){72.6}}
\put(100,30){\line(-9,-1){90}}
\put(117.3,40){\line(-9,-1){107.3}}

\put(134.6,50){\line(-9,-1){123.6}}
\put(117.3,80){\line(-9,-1){107.3}}
\put(82.6,80){\line(-9,-1){72.6}}
\put(65.4,50){\line(-9,-1){55.4}}
\put(82.6,20){\line(-9,-1){72.6}}
\put(117.3,20){\line(-9,-1){107.3}}

\put(100,50){\line(-9,-1){90}}


\put(330,50){\makebox(0,0){$\bullet$}}
\put(315,76){\makebox(0,0){$\bullet$}}
\put(285,76){\makebox(0,0){$\bullet$}} 
\put(270,50){\makebox(0,0){$\bullet$}} 
\put(315,24){\makebox(0,0){$\bullet$}} 
\put(285,24){\makebox(0,0){$\bullet$}} 

\put(345,76){\makebox(0,0){$\bullet$}}
\put(300,102){\makebox(0,0){$\bullet$}} 
\put(255,76){\makebox(0,0){$\bullet$}} 
\put(255,25){\makebox(0,0){$\bullet$}} 
\put(300,-2){\makebox(0,0){$\bullet$}} 
\put(345,24){\makebox(0,0){$\bullet$}} 

\put(330,57){\makebox(0,0){\scriptsize $1/T$}}
\put(300,55){\makebox(0,0){\scriptsize $0$}}

\put(330,50){\line(1,0){80}}
\put(300,102){\arc{103}{-0.3}{0.5}}
\put(300,67.3){\arc{34.6}{-4.71}{-0.52}}
\put(300,76){\arc{52}{-4.71}{-1.57}}
\put(300,102){\arc{103}{-4.71}{-2.9}}
\put(270,50){\line(1,0){30}}
\put(270,50){\line(-1,0){60}}

\put(300,-2){\arc{103}{-3.25}{-1.57}}
\put(300,24){\arc{52}{-4.71}{-1.57}}
\put(300,32.7){\arc{34.6}{-5.75}{-1.57}}
\put(300,-2){\arc{103}{-0.52}{0.1}}



\end{picture} 
\caption{Branch cut in the $u$-plane (left) 
and the $u^{-1}$-plane (right)  
($n=6,r=2$). In the right picture, we set $\phi=0$ for simplicity.}
\label{fig:branchcut} 
\end{figure}
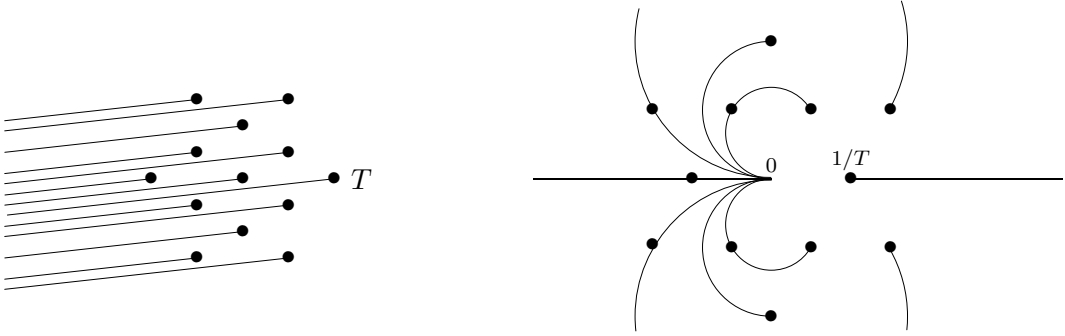
Since the left-hand side is regular at $u=\infty$, 
it follows from Lemma \ref{lem:longest} that 
the convergence radius of the left-hand side of \eqref{eq:Hilbert_trans} 
(as a power series in $u^{-1}$) 
is bigger than or equal to 
\[
\frac{1}{T} = \frac{1}{n} \frac{\sin(\pi/n)}{\sin(\pi r/n)}. 
\]
By Cauchy's integral formula, it follows that the jump of 
the function \eqref{eq:Hilbert_trans} across the cut 
$v_K - e^{\iu\phi} \R_{\ge 0}$ is proportional to 
$\chi(\cF_K,\halpha) \cdot 
(\partial_u)^{\binom{r}{2}} P_K(u)$. 
From this it follows that: 
\begin{itemize} 
\item[(a)] if $\alpha = [\pt]$, we have 
$\chi(\cF_{K_0}, \halpha) = (2\pi\iu)^{-\dim \G} 
\chi(\O_\G, \O_{\pt}) \neq 0$ 
and thus the convergence radius of $\sum_{k=0}^\infty 
(kn)! \Ang{[\pt],J_{kn}} x^{kn}$ is exactly $1/T$; 

\item[(b)] if $\langle \alpha, \Gg_\G \rangle =0$, then 
we have $\chi(\cF_{K_0}, \halpha) = \chi(\O_\G, \halpha)
 = [\Gg_\G, \alpha) = 0$ (by \eqref{eq:factorize_HRR}); 
this implies that \eqref{eq:Hilbert_trans} is holomorphic at 
$u= v_{K_0} = T$; since the left-hand side of \eqref{eq:Hilbert_trans} 
is a power series in $u^{-n}$ (multiplied by $u^{-1}$), 
it is holomorphic at any other $v_K$ with $|v_K| =T$ 
and the series 
$\sum_{k=0}^\infty (kn)! 
\Ang{\alpha, J_{kn}} x^{kn}$ has a convergence radius 
$R(\alpha) > 1/T$. 

\end{itemize} 
Here $K_0 =(r-1,r-2,\dots,1,0)$ and recall from 
Lemma \ref{lem:longest} that $\cF_{K_0} = \O_\G$. 
From part (a) and Proposition \ref{prop:G_qperiod}, 
it follows that 
\[
\lim_{k \to \infty} \sqrt[kn]{(kn)! \Ang{[\pt],J_{kn}}} = T.  
\] 
From part (b), it follows that, when $\langle \alpha, \Gg_\G \rangle =0$, 
\[
\lim_{k\to \infty}
\frac{\Ang{\alpha,J_{kn}}}{\Ang{[\pt],J_{kn}}}
= \lim_{k\to \infty} 
\frac{(kn)! \Ang{\alpha,J_{kn}}}{(kn)! \Ang{[\pt],J_{kn}}} 
= 0. 
\]
This completes the proof of Theorem \ref{thm:Grass_I_dash}. 

\appendix
\section{Eigenvalues of quantum multiplication by $c_1(F)$ on odd cohomology}
\label{app:odd} 
In the main body of the text, we restrict to the even part of cohomology 
group and ignore the odd part. 
The restriction to the even part was partly for the sake of simplicity: 
in general, the big quantum product defines a super-commutative algebra
structure on the full cohomology group, and the big quantum connection 
\eqref{eq:big_qconn} should be treated in the formalism of supermanifolds 
(see \cite{Man99}). 
In this appendix, answering a question of an anonymous referee, 
we discuss Property $\O$ and Gamma conjectures on the 
\emph{full} cohomology group, and show that they are in fact equivalent to 
Property $\O$ and Gamma conjectures on the even part (discussed 
in the main body of the text). 
Moreover, we argue that it is sufficient to consider Property $\O$ and Gamma 
conjectures on the subspace $\bigoplus_{p=0}^{\dim F} H^{p,p}(F)$ 
of diagonal Hodge type. 
In this section, we write $H^{\rm even}(F)$/$H^{\rm odd}(F)$
for the even/odd part of cohomology group, and $H^{\rm full}(F) 
= H^{\rm even}(F) \oplus H^{\rm odd}(F)$ for the full cohomology group. 
(We wrote $H^\udot(F) = H^{\rm even}(F)$ in the main body of the text.) 

\begin{remark} 
After we wrote this appendix, we noticed that Sanda--Shamoto 
\cite[Remark 6.5, Lemma 6.6]{Sanda-Shamoto} had already discussed 
the same issue on odd cohomology. Our new observation is 
the discussion concerning $\bigoplus_{p=0}^{\dim F} H^{p,p}(F)$. 
\end{remark} 

For Gamma Conjecture II, the restriction to the even part was superfluous. 
Recall from Hertling--Manin--Teleman \cite{HMT} that if the quantum cohomology of $F$ 
is semisimple, $F$ is necessarily of Hodge-Tate type (i.e.~$H^{p,q}(F) = 0$ 
unless $p=q$), 
and in particular has no odd cohomology classes. 

Let us study Property $\O$ and Gamma Conjecture I including the odd part. 
For a subspace $V \subset H^{\rm full}(F)$ preserved by $(c_1(F)\star_0)$, 
we define 
\[
T_V := \max\{ |u| : 
\text{$u$ is an eigenvalue of $(c_1(F)\star_0) \colon V \to V$}\} 
\in \overline{\Q}. 
\]
The number $T$ \eqref{eq:T} in the main body of the text equals 
$T_{H^{\rm even}(F)}$. 
By replacing $T$ with $T_V$ in Definition \ref{def:propertyO} 
and regarding $(c_1(F)\star_0)$ as an operator on $V$, 
we can similarly define ``Property $\O$ \emph{on the subspace $V$}''.  
Note that the original Property $\O$ is the ``Property $\O$ on $H^{\rm even}(F)$'' 
in this terminology. 
We define 
\[
H^{(j)} = \bigoplus_{p-q=j} H^{p,q}(F).
\]
By the motivic axiom \cite{Kontsevich-Manin:GW}, 
$H^{(j)}$ is preserved by $(c_1(F)\star_0)$ and $H^{(j)} \star_0 H^{(k)} 
\subset H^{(j+k)}$. We consider Property $\O$ on $H^{(0)} = 
\bigoplus_{p=0}^{\dim F} H^{p,p}(F)$, $H^{\rm even}(F) = \bigoplus_{j\in 2\Z} 
H^{(j)}$ and $H^{\rm full}(F)$. 
\begin{theorem}
\label{thm:full_PropertyO} 
For every Fano manifold, 
we have $T_{H^{(0)}} = T_{H^{\rm full}(F)} = T_{H^{\rm even}(F)} = T$. 
Moreover, the following are equivalent: 
\begin{itemize} 
\item[(1)] $F$ has Property $\O$ on $H^{(0)}$. 
\item[(2)] $F$ has Property $\O$ on $H^{\rm even}(F)$. 
\item[(3)] $F$ has Property $\O$ on $H^{\rm full}(F)$. 
\end{itemize} 
\end{theorem}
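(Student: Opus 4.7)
The plan is to exploit the motivic axiom $H^{(j)}\star_0 H^{(k)}\subset H^{(j+k)}$ stated just before the theorem. This makes $H^{(0)}=\bigoplus_p H^{p,p}(F)$ a commutative subalgebra of $(H^{\rm full}(F),\star_0)$ and realizes $H^{\rm full}(F)$ as an $H^{(0)}$-module in which each $H^{(j)}$ is an $H^{(0)}$-submodule. Decomposing the finite-dimensional commutative $\C$-algebra $H^{(0)}=\prod_\alpha A_\alpha$ into local Artinian factors, with $A_\alpha$ corresponding to a single eigenvalue $u_\alpha$ of $c_1(F)\star_0$ of multiplicity $\dim A_\alpha$, we obtain a decomposition $H^{\rm full}(F)=\bigoplus_\alpha (H^{\rm full})_\alpha$ via idempotents $e_\alpha\in H^{(0)}$, where $(H^{\rm full})_\alpha=e_\alpha\star_0 H^{\rm full}(F)$ is the generalized $u_\alpha$-eigenspace of $c_1(F)\star_0$. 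Since each $e_\alpha$ lies in $(H^{\rm full})_\alpha$ and is nonzero, the spectra of $c_1(F)\star_0$ on $H^{(0)}$, $H^{\rm even}(F)$, and $H^{\rm full}(F)$ coincide as sets; in particular $T_{H^{(0)}}=T_{H^{\rm even}(F)}=T_{H^{\rm full}(F)}=T$.

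The implications $(3)\Rightarrow(2)\Rightarrow(1)$ follow by restriction: on an invariant subspace the multiplicity of any eigenvalue is bounded by its multiplicity on the ambient space, and Part~(2) of Property~$\O$ transfers automatically since the eigenvalue sets agree. For the nontrivial direction $(1)\Rightarrow(3)$ (which also yields $(1)\Rightarrow(2)$), assume $T$ is simple on $H^{(0)}$, so $A_T=\C e_T$ is one-dimensional; I must verify $(H^{(j)})_T:=e_T\star_0 H^{(j)}=0$ for every $j\neq 0$. The key preliminary observation is that $(H^{\rm full}(F),\star_0)$ is a super-commutative super-Frobenius algebra with Frobenius trace $\epsilon(v)=\int_F v$ (so that $(a,b)_F=\epsilon(a\star_0 b)$), and the decomposition $H^{\rm full}(F)=\bigoplus_\alpha (H^{\rm full})_\alpha$ is Poincar\'e-orthogonal (using $(e_\alpha\star_0 v,w)_F=(v,e_\alpha\star_0 w)_F$ and $e_\alpha\star_0 e_\beta=\delta_{\alpha\beta}e_\alpha$). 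Hence the Poincar\'e pairing restricts non-degenerately to $(H^{\rm full})_T$; applied to the one-dimensional subspace $A_T$, this forces $\int_F e_T\neq 0$.

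I would first rule out odd $j\neq 0$ via super-commutativity. If $0\neq v\in (H^{(j)})_T$ with $j$ odd, then by nondegeneracy of the Poincar\'e pairing on $(H^{\rm full})_T$ (which pairs $(H^{(j)})_T$ with $(H^{(-j)})_T$ since $H^{(j)}\perp H^{(k)}$ unless $j+k=0$), there exists $w\in (H^{(-j)})_T$ with $v\star_0 w=c\,e_T$ and $c\neq 0$. Expanding $(v\star_0 w)\star_0(v\star_0 w)=c^2 e_T$ in two ways and rearranging via $w\star_0 v=(-1)^{|v||w|}v\star_0 w=-v\star_0 w$ (both $|v|$ and $|w|$ being odd) yields $c^2=-c^2$, contradicting $c\neq 0$.

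The hard part---and the main obstacle of this proof---is the case of even $j\neq 0$, where super-commutativity provides no sign obstruction and $v\star_0 v$ can be nonzero. Ruling out $(H^{(j)})_T\neq 0$ in this case appears to require finer structural input: one analyzes $(H^{\rm full})_T$ as a super-commutative super-Frobenius $\C$-algebra with degree-zero part $\C e_T$ and Frobenius trace $\epsilon$ vanishing on all higher Hodge graded pieces, and combines this with the Fano-specific constraint that quantum corrections to $c_1(F)\cup$ have degree shifts governed by $c_1(F)\cdot d>0$ for effective $d\neq 0$ (so that classically $c_1(F)\cup|_{H^{(j)}}$ is nilpotent and only correction terms from curve classes $d$ with $c_1(F)\cdot d=1$ can preserve Hodge degree). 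I expect this last step---showing that the only eigenvalue of $c_1(F)\star_0$ on $H^{(j)}$ for $j\neq 0$ smaller than $T$ in absolute value, or equivalently that the nilpotent extension of $A_T$ inside $(H^{\rm full})_T$ cannot extend into the off-diagonal Hodge pieces---to be the most delicate part of the argument.
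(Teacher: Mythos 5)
Your argument for establishing $T_{H^{(0)}}=T_{H^{\rm even}}=T_{H^{\rm full}}=T$ and the easy implications is sound, and your disposal of the odd-$j$ case via the sign $w\star_0 v=-v\star_0 w$ is correct (though different from the paper's). But you explicitly leave a gap in the case $j\neq 0$ even, and this gap is genuine as you have written it: nothing in your argument up to that point prevents $(H^{(j)})_T\neq 0$ for even $j$.

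The missing observation — which the paper uses, and which in fact handles odd and even $j\neq 0$ uniformly, making your sign argument unnecessary — is that \emph{any} element of $H^{(j)}$ with $j\neq 0$ is nilpotent in $(H^{\rm full}(F),\star_0)$. This is immediate from the finiteness of the Hodge grading together with the motivic axiom: if $\theta\in H^{(j)}$ then $\theta^{\star n}\in H^{(nj)}=0$ once $|nj|>\dim F$. Consequently, if $v\in(H^{(j)})_T$ and $w\in(H^{(-j)})_T$ satisfy $v\star_0 w=c\,e_T$ with $c\neq 0$, then $c\,e_T$ lies in the ideal generated by the nilpotent element $v$ and so is itself nilpotent; but $e_T$ is a nonzero idempotent, a contradiction. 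This completes the even case without any appeal to supercommutativity signs, to Frobenius-algebra structure of $(H^{\rm full})_T$, or to the Fano-specific degree constraints you speculate about. In short: the structural input you thought was missing is just the observation that elements of nonzero Hodge bidegree are nilpotent; once you have that, your framework closes.

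Beyond this, your decomposition via idempotents $e_\alpha$ of the Artinian algebra $H^{(0)}$ is a reasonable packaging of the same two facts the paper isolates as lemmas: the spectrum on $H^{(j)}$ is contained in the spectrum on $H^{(0)}$ (because $H^{(j)}$ is an $H^{(0)}$-module), and a multiplicity-one eigenvalue of $H^{(0)}$ cannot reappear on $H^{(j)}$ for $j\neq 0$. The paper proves the latter by constructing, from a putative eigenvector $\theta\in H^{(j)}$, the element $\beta=\theta\star_0\sigma\in H^{(0)}$ (with $\sigma$ chosen via Poincaré–Serre duality so that $(\theta,\sigma)\neq 0$), showing $\beta$ is both a nonzero eigenvector and nilpotent, and deriving a contradiction with the idempotent $\psi_0$. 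Your approach via the Frobenius-orthogonality of the generalized eigenspaces arrives at essentially the same pair $(v,w)$; the only thing you lacked was the nilpotence step.
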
 

This theorem follows from the following two lemmas. 

\begin{lemma} 
For all $j$, 
the spectrum of $(c_1(F)\star_0)$ on $H^{(j)}$ is a subset of the 
spectrum of $(c_1(F)\star_0)$ on $H^{(0)}$. 
\end{lemma}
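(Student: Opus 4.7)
The plan is, for each eigenvalue $u$ of $(c_1(F)\star_0)$ acting on $H^{(j)}$, to produce an explicit $u$-eigenvector in $H^{(0)}$. First I would choose an honest eigenvector $v \in H^{(j)}$ with $c_1(F)\star_0 v = u v$ (which exists over $\C$, taking any nonzero vector in the kernel of $(c_1(F)\star_0 - u)$ restricted to $H^{(j)}$). The key construction is the quantum-multiplication map
\[
m_v \colon H^{(-j)} \longrightarrow H^{(0)}, \qquad m_v(w) = v \star_0 w,
\]
which lands in $H^{(0)}$ thanks to the motivic axiom $H^{(j)} \star_0 H^{(-j)} \subset H^{(0)}$ recorded in the excerpt.

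Next I would check that $m_v$ intertwines $(c_1(F)\star_0)$ on $H^{(-j)}$ with multiplication by $u$ on $H^{(0)}$. Since $c_1(F) \in H^{1,1}(F) \subset H^{(0)}$, both sides of the identity
\[
c_1(F) \star_0 m_v(w) = c_1(F) \star_0 v \star_0 w = (c_1(F)\star_0 v) \star_0 w = u\, m_v(w)
\]
make sense by the associativity and commutativity of $\star_0$. Consequently the image of $m_v$ is contained in the $u$-eigenspace of $(c_1(F)\star_0)$ acting on $H^{(0)}$.

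It remains to verify that $m_v \neq 0$, so that this $u$-eigenspace is nonzero. Here I would invoke the Frobenius property of the Poincar\'e pairing with respect to the quantum product, namely $(v\star_0 w, z)_F = (v, w \star_0 z)_F$ for all $w,z$. If $m_v$ vanished on all of $H^{(-j)}$, then taking $z = 1 \in H^{(0)}$ would give $(v, w)_F = 0$ for every $w \in H^{(-j)}$; but Hodge symmetry makes the Poincar\'e pairing between $H^{(j)}$ and $H^{(-j)}$ perfect, forcing $v = 0$, contradicting the choice of $v$. Therefore $m_v$ is nonzero, which shows that $u$ is an eigenvalue of $(c_1(F)\star_0)$ on $H^{(0)}$.

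There is no real obstacle: the whole argument is a short consequence of the motivic grading of $\star_0$, its associativity, and the Frobenius property. The only point warranting mild care is the distinction between actual and generalized eigenvectors, which is painless because over $\C$ one can always pick $v$ to be an honest eigenvector inside the generalized eigenspace.
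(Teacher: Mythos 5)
Your proof is correct, but it follows a genuinely different route from the one in the paper. The paper's proof is a one-liner: since $H^{(j)}$ is a module over the finite-dimensional commutative algebra $(H^{(0)},\star_0)$ and $c_1(F)\in H^{(0)}$, the spectrum of $c_1(F)\star_0$ on any such module is automatically contained in its spectrum on the algebra itself. (Unwound, this is Cayley--Hamilton: the characteristic polynomial $p$ of $c_1(F)\star_0$ acting on $H^{(0)}$ satisfies $p(c_1(F))=0$ \emph{as an element of} $H^{(0)}$, hence $p(c_1(F)\star_0)$ acts as zero on every $H^{(0)}$-module, so the minimal polynomial on $H^{(j)}$ divides $p$.) You instead produce an explicit nonzero vector in the $u$-eigenspace of $H^{(0)}$ by choosing an honest eigenvector $v\in H^{(j)}$ and multiplying against $H^{(-j)}$, then verify $m_v\neq 0$ via the Frobenius property $(v\star_0 w,z)=(v,w\star_0 z)$ together with the nondegeneracy of the Poincar\'e pairing between $H^{(j)}$ and $H^{(-j)}$. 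Both arguments rely on the same module structure; yours is more constructive and pins down where the eigenvector lives, at the cost of invoking Poincar\'e duality and the Frobenius compatibility of the pairing, which the paper's abstract argument avoids entirely. One tiny stylistic remark: the contradiction step of your nonvanishing argument can be rephrased directly — if $u$ is an eigenvalue on $H^{(j)}$ then by the Frobenius-pairing duality it is also an eigenvalue of the adjoint on $H^{(-j)}$, and the product $v\star_0 w$ for a suitable dual pair gives the eigenvector in $H^{(0)}$ — but your version is fine as written.
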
 
\begin{proof}
The reason is the same as in Proposition \ref{prop:ambient_evaluation}; 
it suffices to note that $H^{(j)}$ is a module over the algebra $(H^{(0)},\star_0)$ 
and that $c_1(F)\in H^{(0)}$.  
\end{proof} 

\begin{lemma} 
Let $\lambda$ be an eigenvalue of $(c_1(F)\star_0)$ on $H^{(0)}$ of 
multiplicity one. Then $\lambda$ is not an eigenvalue of $(c_1(F)\star_0)$ 
on $H^{(j)}$ with $j\neq 0$.  
\end{lemma}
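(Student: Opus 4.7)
The strategy is to exploit the Frobenius algebra structure on the generalized $\lambda$-eigenspace $V_\lambda\subset H^{\rm full}(F)$ of the operator $L:=(c_1(F)\star_0)$. The plan is to show that $V_\lambda$ inherits the structure of a finite-dimensional graded commutative Frobenius $\C$-algebra whose degree zero part equals $\C e_\lambda$, and then derive a contradiction from the existence of any nonzero element in nonzero degree.

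First I would check that $V_\lambda$ is an ideal of $(H^{\rm full}(F),\star_0)$: associativity gives $(L-\lambda)^N(a\star_0 b) = ((L-\lambda)^N a)\star_0 b$, so if $a\in V_\lambda$ then $a\star_0 b\in V_\lambda$ for all $b$. Next, the Frobenius property $(La,b)_F = (a,Lb)_F$ yields $((L-\lambda)^N a,b)_F = (a,(L-\lambda)^N b)_F$; if $a\in V_\lambda$ and $b\in V_\mu$ with $\mu\neq\lambda$, the left-hand side vanishes while $(L-\lambda)^N$ acts invertibly on $V_\mu$, giving $(V_\lambda,V_\mu)_F = 0$. Hence $(\cdot,\cdot)_F$ restricts to a nondegenerate pairing on each $V_\mu$, and in particular $V_\lambda$ is a Frobenius algebra with unit $e_\lambda$ (the component of $1\in H^{\rm full}(F)$ in $V_\lambda$, which lies in $H^{(0)}$ since the projection onto $V_\lambda$ is a polynomial in $L$ and hence preserves the $H^{(j)}$-grading). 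Because $c_1(F)\in H^{(0)}$, $L$ preserves the decomposition $H^{\rm full}(F) = \bigoplus_j H^{(j)}$, so $V_\lambda = \bigoplus_j V_\lambda^{(j)}$ with $V_\lambda^{(j)} := V_\lambda\cap H^{(j)}$; moreover $V_\lambda^{(j)}\star_0 V_\lambda^{(k)}\subset V_\lambda^{(j+k)}$. The multiplicity-one hypothesis reads $V_\lambda^{(0)} = \C e_\lambda$.

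The core of the argument is then the following dichotomy applied to any nonzero $a\in V_\lambda^{(j)}$ with $j\neq 0$. On the algebraic side, the powers $1,a,a^2,\dots$ lie in the pairwise distinct graded pieces $V_\lambda^{(0)},V_\lambda^{(j)},V_\lambda^{(2j)},\dots$, so finite-dimensionality of $V_\lambda$ forces $a^k=0$ for some $k$; thus $a$ is nilpotent. On the pairing side, consider the Frobenius trace $\epsilon(x) := (x,e_\lambda)_F$ on $V_\lambda$, so that $(a,b)_F = \epsilon(a\star_0 b)$. Since Poincar\'e duality pairs $V_\lambda^{(k)}$ with $V_\lambda^{(-k)}$, $\epsilon$ vanishes on $V_\lambda^{(k)}$ for $k\neq 0$. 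I would then evaluate $\epsilon(a\star_0 b)$ on each graded component of $b$: for $b\in V_\lambda^{(k)}$ with $k\neq -j$, one has $a\star_0 b\in V_\lambda^{(j+k)}$ with $j+k\neq 0$, so $\epsilon(a\star_0 b)=0$; for $b\in V_\lambda^{(-j)}$, an equality $a\star_0 b = c\, e_\lambda$ with $c\neq 0$ would make $a$ invertible in $V_\lambda$ (with inverse $c^{-1}b$), contradicting nilpotence, so again $\epsilon(a\star_0 b)=0$. Thus $(a,\cdot)_F\equiv 0$ on $V_\lambda$, contradicting the nondegeneracy established in the previous step, and I conclude $V_\lambda^{(j)}=0$ for $j\neq 0$.

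The argument is entirely algebraic and uses only structures already available: the Frobenius compatibility of $(\cdot,\cdot)_F$ with $\star_0$, and the motivic axiom that makes the $H^{(j)}$-grading compatible with $\star_0$. The main point requiring care is the first step, in particular verifying that $(\cdot,\cdot)_F$ restricts nondegenerately to each $V_\mu$ and that $e_\lambda\in V_\lambda^{(0)}$; the rest is a short bookkeeping exercise with the grading.
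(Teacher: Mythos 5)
Your proof is correct, and it takes a genuinely different route from the paper. The paper's argument works with a specific eigenvector: it first shows the $\lambda$-eigenvector $\psi_0\in H^{(0)}$ can be normalized to an idempotent (using $H^{(0)}=\C\psi_0\oplus\im(L-\lambda)$ where $L=c_1(F)\star_0$), then assumes a $\lambda$-eigenvector $\theta\in H^{(j)}$ with $j\neq 0$ exists, picks $\sigma\in H^{(-j)}$ with $(\theta,\sigma)_F\neq 0$, and shows $\beta:=\theta\star_0\sigma$ is simultaneously a nonzero multiple of the idempotent $\psi_0$ and nilpotent (because $\beta^m=\theta^m\star_0\sigma^m=0$ by commutativity), a contradiction. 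Your argument instead sets up the full generalized-eigenspace decomposition $H^{\rm full}(F)=\bigoplus_\mu V_\mu$, observes that each $V_\mu$ is a graded ideal on which the Poincar\'e pairing is nondegenerate and Frobenius with unit $e_\lambda$, and then shows that any nonzero $a\in V_\lambda^{(j)}$, $j\neq 0$, is orthogonal to all of $V_\lambda$: it pairs nontrivially only with $V_\lambda^{(-j)}$, but $a\star_0 b\in V_\lambda^{(0)}=\C e_\lambda$, and nonvanishing would make the nilpotent $a$ invertible. Both arguments rest on the same three ingredients (nondegeneracy of the pairing between $H^{(j)}$ and $H^{(-j)}$, nilpotence of elements of $H^{(j)}$ for $j\neq 0$, and the multiplicativity $H^{(j)}\star_0 H^{(k)}\subset H^{(j+k)}$), and both produce the same idempotent $e_\lambda=\psi_0$ up to scalar. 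The paper's version is shorter and more elementary; yours is more structural, establishes the slightly stronger vanishing $V_\lambda^{(j)}=0$ at the generalized-eigenspace level, and is closer in spirit to the Hertling--Manin--Teleman argument that the paper cites as its model. One small point to make explicit: $e_\lambda\neq 0$ (so that $V_\lambda^{(0)}$ is genuinely one-dimensional rather than zero) because $\lambda$ is by hypothesis an eigenvalue on $H^{(0)}$, so $V_\lambda\neq 0$ and its unit cannot vanish.
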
 

\begin{proof} 
The proof here is analogous to the argument of 
Hertling--Manin--Teleman \cite{HMT}. 
Let $\psi_0\in H^{(0)}$ be a non-zero eigenvector 
of $(c_1(F)\star_0)$ of eigenvalue $\lambda$. 
By assumption, such $\psi_0$ is unique up to constant. 
Since we have $H^{(0)} = \C\psi_0 \oplus \im((c_1(F)\star_0) - \lambda)$, 
we can write
$1 = c \psi_0 + ((c_1(F)\star_0)-\lambda) \gamma$ 
for some $\gamma\in H^{(0)}$. 
Applying $(\psi_0\star_0)$ and noting that 
$\psi_0\star(c_1(F)\star_0 \gamma - \lambda \gamma) 
= \lambda \psi_0\star_0\gamma - \lambda \psi_0\star_0\gamma=0$, 
we obtain 
\[
\psi_0 = c\psi_0 \star_0 \psi_0. 
\] 
Thus $c\neq 0$. By replacing $\psi_0$ with $c\psi_0$, 
we may assume that $c = 1$, i.e.~$\psi_0 \star_0 \psi_0 = \psi_0$.

Suppose on the contrary that there exists a non-zero $\theta \in H^{(j)}$ 
with $j \neq 0$ such that
$c_1(F) \star_0 \theta = \lambda \theta$. 
By Poincar\'{e}--Serre duality, we can find $\sigma\in H^{(-j)}$ 
such that $(\theta, \sigma) \neq 0$. 
We now consider $\beta:= \theta \star_0 \sigma \in H^{(0)}$. 
Since $(\beta, 1) = (\theta, \sigma) \neq 0$, we have $\beta\neq 0$. 
We also have $c_1(F)\star_0 \beta = c_1(F) \star_0 \theta \star_0 
\sigma = T\theta \star_0 \sigma = T\beta$.  
Therefore $\beta$ is a non-zero eigenvector of $(c_1(F)\star_0)$. 
Our assumption implies that $\beta = e\psi_0$ for some $e \neq 0$. 
On the other hand, $\beta = \theta \star_0 \psi_0$ is nilpotent 
in quantum cohomology 
because $\theta$ is nilpotent (recall that $H^{(j)} \star_0 H^{(k)} 
\subset H^{(j+k)}$). 
This contradicts
the fact that $\psi_0$ is a non-zero idempotent. 
\end{proof} 

\begin{remark} 
Property $\O$ on the full cohomology group is useful when discussing 
Property $\O$ for the product. From the quantum K\"{u}nneth isomorphism 
and Theorem \ref{thm:full_PropertyO}, 
one can easily deduce that if Fano manifolds $F_1,F_2$ satisfy Property $\O$ 
(say, on $H^{\rm even}$), 
then $F_1\times F_2$ also satisfies Property $\O$. 
Note that $H^{\rm odd}(F_1) 
\otimes H^{\rm odd}(F_2)$ contributes to $H^{\rm even}(F_1 \times F_2)$ 
under the K\"unneth isomorphism, and we need Property $\O$ on the 
full cohomology group. 
\end{remark} 

Finally we discuss Gamma Conjecture I on the full cohomology group. 
The small quantum connection \eqref{eq:qconn} 
on the full cohomology group is still a flat connection in the usual sense, but 
it splits into the direct sum of flat connections on $H^{\rm even}(F)$ 
and $H^{\rm odd}(F)$; moreover it decomposes into flat connections on 
$H^{(j)}$, $j\in \Z$. If a Fano manifold $F$ satisfies Property $\O$, 
the spectrum of $(c_1(F)\star_0)$ on $\bigoplus_{j\neq 0} H^{(j)}$ 
is contained in $\{u : \Re(u) <T\}$ 
by the above lemmas. 
The general property of irregular connections of Poincar\'e rank one 
(see \cite[Propositions 3.2.1, 3.3.1]{GGI:gammagrass})  
shows that the subspace $\AA$ of flat sections and the principal asymptotic 
class $A_F$ appearing in \S\ref{subsec:principal_asymptotic}
remain the same if we consider the small quantum connection on $H^{(0)}$
or on $H^{\rm full}(F)$. This implies that $A_F$ lies in $H^{(0)} 
= \bigoplus_{p=0}^{\dim F} H^{p,p}(F)$, and that \emph{Gamma Conjecture I 
on the subspaces $H^{(0)}$, $H^{\rm even}(F)$, $H^{\rm full}(F)$ 
are all equivalent to each other}, provided that $F$ satisfies Property $\O$. 

\begin{remark} 
The discussion in this appendix suggests that the even part of quantum 
cohomology knows something about the odd part. 
Gamma conjecture also suggests a similar relation.  
If $F$ satisfies Gamma conjecture $I$, the even part of quantum 
cohomology knows the Gamma class $\Gg_F$; 
the Gamma class then gives the Chern classes $\ch_k(TF)$ 
(if the Euler constant 
$\gamma$ and $\zeta(k), k\ge 2$ are linearly independent over $\Q$) 
and in particular, the Euler number and $\dim H^{\rm odd}(F)$. 
\end{remark} 

\bibliographystyle{plain} 
\providecommand{\arxiv}[1]{\href{http://arxiv.org/abs/#1}{arXiv:#1}}

\end{document}